\documentclass[11pt]{amsart}
\usepackage[a4paper,margin=0.9in]{geometry}
\usepackage{comment}
\usepackage[colorlinks,citecolor=magenta,linkcolor=black]{hyperref}
\pdfpagewidth=\paperwidth \pdfpageheight=\paperheight
\usepackage{amsfonts,amssymb,amsthm,amsmath,tabu,url}
\usepackage{pgf}
 \usepackage{array}
 \usepackage{tikz-cd}
 \usepackage{pstricks}
 \usepackage{pstricks-add}
 \usepackage{pgf,tikz}
 \usetikzlibrary{automata}
 \usetikzlibrary{arrows}
 \usepackage{indentfirst}
\usepackage{tabularx} 

\DeclareMathOperator{\orb}{orb}
\DeclareMathOperator{\Sing}{Sing}
\DeclareMathOperator{\rk}{rk}

\DeclareMathOperator{\Hom}{Hom}

\newcommand{\SP}{\textrm {SP}}
\newcommand{\ID}{\textrm {ID}}
\def\cC{\mathcal{C}}
\def\cL{\mathcal{L}}

\def\cR{{\mathcal R}}
\def\cA{{\mathcal A}}
\def\cF{\mathcal{F}}
\def\ZZ{\mathbb{Z}}

\def\NN{\mathbb{N}}
\def\CC{\mathbb{C}}

\def\QQ{\mathbb{Q}}

\def\PP{\mathbb{P}}
\def\kk{\Bbbk}
\def\w{\omega}
\def\s{\sigma}
\def\simmaps#1#2{\smash{\mathop{\sim}\limits^{#1}_{#2}}}
\def\surj{\mathbin{\twoheadrightarrow}}
\def\rightmap#1{\smash{\mathop{\rightarrow}\limits^{#1}}}

\theoremstyle{plain}
\newtheorem{thm}{Theorem}
\newtheorem{theorem}[thm]{Theorem}

\newtheorem{lemma}[thm]{Lemma}
\newtheorem{proposition}[thm]{Proposition}
\newtheorem{corollary}[thm]{Corollary}

\theoremstyle{definition}
\newtheorem{definition}[thm]{Definition}
\newtheorem{remark}[thm]{Remark}
\newtheorem{example}[thm]{Example}

\newtheorem{thevarthm}[thm]{\varthmname}

\newenvironment{varthm*}[1]{\trivlist\item[]{\bf #1.}\it}{\endtrivlist}

\def\subclassname{{\bfseries Mathematics Subject Classification
(2020)}\enspace}
\def\subclass#1{\par\addvspace\medskipamount{\rightskip=0pt plus1cm
\def\and{\ifhmode\unskip\nobreak\fi\ $\cdot$
}\noindent\subclassname\ignorespaces#1\par}}

\begin{document}
\title{On modular inequalities for plane projective curves}
\author[Jose Ignacio Cogolludo-Agust\'in]{Jose Ignacio Cogolludo-Agust\'in$^*$}
\thanks{$^*$ Partially supported by a grant of the Spanish Ministerio de Ciencia, Innovación y Universidades MICIU/AEI/10.13039/501100011033/FEDER, UE, reference PID2024-156181NB-C33.}
\address{Dpto. Matemáticas, IUMA, Universidad de Zaragoza, Zaragoza, Spain}
\email{jicogo@unizar.es}
\author[Anca~M\u acinic]{Anca~M\u acinic$^{**}$}
\thanks{$^{**}$ Partially supported by a grant of the Ministry of Research, Innovation and Digitization, CNCS - UEFISCDI, project number PN-IV-P1-PCE-2023-2001, within PNCDI IV}
\address{Simion Stoilow Institute of Mathematics, 
 Bucharest, Romania}
\email{Anca.Macinic@imar.ro}
\date{\today}
\maketitle

\thispagestyle{empty}
\begin{abstract}
We introduce modular inequalities for complements of  plane curves, based on a Combinatorial Aomoto complex construction associated with the weak combinatorial type of a curve. We use this as a tool to investigate twisted Alexander polynomials, in particular to study the characteristic polynomial of the Milnor fiber associated to a projective plane curve, i.e. the classical Alexander polynomial. We give criteria for the non-triviality of the resonance
in degree $1$,
over a positive characteristic field, for curves with quasi fiber--type structure and we compute lower bounds for the multiplicities of some roots of twisted  Alexander polynomials for this type of curves.
We apply these results to theoretically compute  the multiplicities of certain roots of twisted  Alexander polynomials for fiber--type curves.
\end{abstract}
\subclass{14H50, 32S55, 32S40, 
14C21, 14Q05, 14B05}

\section{Introduction}

The Milnor fiber $\cF(\cC)$ of a complex projective curve $\cC\subset \mathbb{P}^{2}$, defined by a homogeneous polynomial $f$ of degree $d$, is the smooth affine hypersurface $\cF(\cC)=\{(x,y,z)\in \mathbb{C}^{3}\setminus \{0\}\mid f(x,y,z)=1\}$. The action of a $d$-th root of unity over $\cF(\cC)$ by $h(x,y,z)=(\xi x,\xi y,\xi z)$ endows the homology $H_1(\cF(\cC);\QQ)$ with a $\QQ[t^{\pm 1}]$--module structure. Since $h_*$ has finite order $d$, this is a semi-simple torsion module. Moreover, since $\QQ[t^{\pm 1}]$ is a PID each of its torsion components is generated by a (Laurent) polynomial associated with $\phi_k$, where $\phi_k(t)$ is a cyclotomic polynomial of order $k|d$. The product of all these polynomials 
\begin{equation}
\label{eq:cAP}
\Delta_\cC(t)=\prod_{k|d} \phi_k^{b_k}(t)
\end{equation}
is referred to as the \emph{Alexander polynomial} of $\cC$. The module $H_1(\cF(\cC);\QQ)$ controls the topology of a $d$-cyclic cover of the complement $\PP^2\setminus \cC$ ramifying with maximal order at every irreducible component of $\cC$, that is, associated with the epimorphism $\varepsilon:H_1(\PP^2\setminus \cC)\to\ZZ_d$ defined as $\varepsilon(\mu_i)=1$ for any meridian $\mu_i$ around an irreducible component $\cC_i$ of $\cC$. If one is interested in cyclic $d$-covers of $\PP^2\setminus \cC$ associated with other epimorphisms $\varepsilon:H_1(\PP^2\setminus \cC)\to\ZZ_d$ defined as $\varepsilon(\mu_i)=m_i\in\ZZ_d$ it is useful to define the \emph{twisted} Milnor fiber $\cF(\cC,\bar m)=\{(x,y,z)\in \mathbb{C}^{3}\setminus \{0\}\mid F_1^{m_1}\cdots F_r^{m_r}(x,y,z)=1\}$ associated with the weights $\bar m=(m_1,\dots,m_r)$ as long as $\gcd(\bar m)=1$, which is the condition for $\varepsilon$ to be an epimorphism.
The torsion of the $\QQ[t^{\pm 1}]$--module $H_1(\cF(\cC,\bar m);\QQ)$
also defines the \emph{twisted Alexander polynomial} 
\begin{equation}
\label{eq:twAP}
\Delta_{(\mathcal{C},\bar m)}(t)=\prod_{k|d} \phi_k^{b_{k,\bar m}}(t).
\end{equation}

For complex projective line arrangements $\cL$, Papadima--Suciu~\cite{PS2} provide {\it combinatorial} upper bounds for 
$b_k$ in~\eqref{eq:cAP} for the Alexander polynomial $\Delta_{\cL}(t)$ and $k$ a prime power. In the theory of arrangements of hyperplanes, the {\it combinatorics} of an arrangement reverberates into the topological and algebraic properties of the arrangement and of its complement. Let us mention, for instance, the emblematic result by Brieskorn~\cite{Brieskorn-tresses} and completed by Orlik--Solomon in~\cite{OS} which states that the cohomology algebra of the complement of an arrangement of hyperplanes with integer coefficients is completely determined by the intersection lattice of the arrangement.

The purpose of this paper is to give estimates for the multiplicities $b_k$ of the classical Alexander polynomial in~\eqref{eq:cAP} and $b_{k,\bar m}$ for the twisted version in~\eqref{eq:twAP} in case $k=p^s$, for $p$ prime.
In order to do that we use two different tools that offer respectively upper and lower bounds to these multiplicities~$b_{k}$ and $b_{k,\bar m}$. 

Upper bounds are obtained via the \emph{weak combinatorial type} of a curve $\cC$, which can be seen as a generalization to curves of the classical concept of combinatorics of line arrangements, developed by the first author in~\cite{C} and jointly with Matei in~\cite{CM}. The authors prove that the weak combinatorial type of a curve determines its cohomology ring. 

The cohomology ring over a finite field $\kk$ produces a complex given by multiplication by a 1-form $\w\in H^1(X;\kk)$ denoted by 
$(H^*(X;\kk),\cdot\wedge\w)$ 
and called the \emph{Aomoto Complex}. Note that the cohomology of these complexes produces a filtration of $H^1(X;\kk)$ called the \emph{resonance varieties} of $X$ and defined as
$$\cR_k(X,\kk):=\{\w \in H^1(X;\kk) \mid \dim_\kk H^1(H^*(X;\kk),\cdot\wedge\w)\geq k\}.$$
On the other hand, given the weak combinatorial type of a curve we define in~\S\ref{sec:gen_Aomoto} a graded algebra $A_{W,\kk}^*$ over a field $\kk$. Multiplication by an element of order one $\w\in A_{W,\kk}^1$ also produces a family of complexes called \emph{Combinatorial Aomoto Complex} and denoted by $(A_{W,\kk}^1,\cdot\curlywedge\w)$
whose cohomology produces a filtration of $A_{W,\kk}^1$ called the \emph{combinatorial resonance varieties} of $W$ and defined as
$$\cR_k(W,\kk):=\{\omega \in A_{W,\kk}^1 \mid \dim_\kk H^1(A_{W,\kk}^*,\cdot\curlywedge \w)\geq k\}.$$
Note that 
$$\{0\}=\dots =\cR_{-1}(W,\kk)=\cR_0(W,\kk)\subseteq \cR_1(W,\kk)\subseteq\dots\subseteq\cR_r(W,\kk)=\cR_{r+1}(W,\kk)=\dots = A^1_{W,\kk},$$
where $r=\dim_\kk A^1_{W,\kk}$.
Both families of complexes are neither isomorphic nor quasi-isomorphic in general; however, one of our main results is Theorem~\ref{thm:resonance} which states that there is a morphism of complexes that allows one to identify their resonance varieties, namely,
$$\cR_k(W,\kk)\cong\cR_k(X,\kk).$$

Combining this with~\cite[Theorem C]{PS2} we compute in \S\ref{sec:generalcase}, Theorem \ref{thm:inequality}, upper bounds on $b_k$ for $k$ a prime power, using combinatorial resonance varieties of the weak combinatorial type of the curve. We illustrate this result by computing bounds for the multiplicities of roots of the classical Alexander polynomials of order powers of  primes, for a series of examples.

We also present in~\S\ref{subsec:triv_crit} a triviality criterion for the cohomology in degree $1$ of the Combinatorial Aomoto complex with coefficients in a finite field. This in turn enables us to formulate combinatorial conditions to ensure $b_k=0$, see Theorem \ref{thm:completelypreductive}.
Section \S\ref{subsec:twisted} is devoted to the twisted counterpart of the previous results. In addition, in 
\S\ref{subsec:milnor} we present some examples.

A tool for obtaining lower bounds for $b_k$, and more generally for $b_{k,\bar m}$, is presented in \S\ref{sec:pencils} and applied to \emph{(quasi) fiber-type curves}, that is, the class of curves that can be obtained as a union of fibers of a pencil. These lower bounds depend on certain properties of the pencil.
This class contains as a subclass the fiber--type curves, also known as pencil--type curves, in particular the Halpen pencils of index $k \geq 1$ defined in \cite{DLPU}, and the projective line arrangements that admit combinatorial pencil/multinet structures as defined in~\cite{CM,FY}. 

The lower bounds for $b_{k,\bar m}$ in the  (quasi) fiber-type curve case are given precisely by the number and multiplicity of the non-reduced fibers of the pencil $\mathcal{C}$ is a union of, but also, and more surprisingly, by the extra multiple fibers that are not part of~$\mathcal{C}$, see Theorem~\ref{thm:roots}. This generalizes several recent results, for instance \cite[Theorems 3.2 -- 3.4]{DPS}. 
It is worth noticing that this result applies not only to roots of unity of order prime powers as the results in section~\S\ref{sec:generalcase} did.

In \S\ref{sec:applications} we present a series of applications to reprove known results by Dimca-Pokora-Sticlaru in~\cite{DPS} on Halphen curves and to provide new examples of calculations of $b_k$ when it coincides with the upper and lower bounds obtained above.

Finally, in \S\ref{sec:Yoshinaga}
we revisit Yoshinaga's counterexample to a conjecture by Papadima--Suciu~\cite{PS3}. The conjecture questioned the existence, in the class of arrangements of hyperplanes, of examples for which the modular inequalities are strict. We approach Yoshinaga's example from a new perspective, i.e. viewed as a quasi fiber--type curve providing an alternative proof.

\section*{Acknowledgments}
We would like to thank to Piotr Pokora for many discussions during the elaboration of this note and for advising  on the Halphen pencils examples from  \S\ref{subsec:milnor}.

\section{Preliminaries}

Let $\mathcal{C} \subset \mathbb{C}\mathbb{P}^2$ be a reduced curve of degree $d$ and let $ \mathcal{C}:=\cup_{i=0}^r \mathcal{C}_i$, where $\mathcal{C}_i:=\{ F_i =0\}$, be the decomposition of $\mathcal{C}$ into irreducible components. 
Denote $\mathbb{P}^2:=\mathbb{C} \mathbb{P}^{2} $.

\subsection{Torsion freeness of the homology of the complement of a projective curve}

An initial condition of torsion freeness on the homology of the space $\mathbb P^2\setminus \mathcal{C}$ is required in our context (\cite[Theorem C]{PS3}). This follows from a well-known result in the context of complements of hypersurfaces in the complex projective space (see for instance~\cite[\S4 Prop. 1.3]{Dimca-singularities}).

\begin{proposition}
\label{prop:complementC_tors_free}
 In the above notation, if  ${\gcd}(\deg(F_0),\dots,\deg(F_r)) = 1$, then $H_1(\mathbb{P}^2 \setminus \mathcal{C}; \mathbb{Z})\cong \mathbb{Z}^{r}$.
\end{proposition}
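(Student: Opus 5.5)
The plan is to use the standard computation of $H_1$ of a hypersurface complement, $H_1(\mathbb{P}^2\setminus\mathcal{C};\mathbb{Z})\cong \mathbb{Z}^{r+1}/(d_0,\dots,d_r)$ with $d_i=\deg F_i$, and then show that the gcd hypothesis makes this quotient free of rank $r$.

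First I will establish the presentation. I choose a generic line $L\subset\mathbb{P}^2$ meeting $\mathcal{C}$ transversally in $d=\sum d_i$ smooth points. By the Zariski--Lefschetz hyperplane theorem for quasi-projective complements, the inclusion $L\setminus(L\cap\mathcal{C})\hookrightarrow \mathbb{P}^2\setminus\mathcal{C}$ induces a surjection on $\pi_1$, and hence on $H_1$. Now $H_1(L\setminus(L\cap \mathcal{C}))$ is generated by the $d$ meridians around the points of $L\cap \mathcal{C}$, and the only relation is that their sum is zero, since $L\cong\mathbb{P}^1$. Meridians of points lying on the same irreducible $\mathcal{C}_i$ become homologous in the complement, because $\mathcal{C}_i\setminus\Sing\mathcal{C}$ is connected. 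So $H_1(\mathbb{P}^2\setminus\mathcal{C})$ is generated by $\mu_0,\dots,\mu_r$ and satisfies $\sum d_i\mu_i=0$.

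Next I will show there are no further relations. I use the Alexander duality / long exact sequence computation, or more concretely the well-defined homomorphisms $\mathrm{lk}_i\colon H_1(\mathbb{P}^2\setminus\mathcal{C})\to\mathbb{Z}$. To build them, I take any loop $\gamma$ and the rational function $F_i^{d_j}/F_j^{d_i}$; its argument around $\gamma$ defines an integer winding, and these windings detect the $\mu_i$. Equivalently, I note that $H^1(\mathbb{P}^2\setminus\mathcal{C};\mathbb{Z})\cong \ker(\mathbb{Z}^{r+1}\to\mathbb{Z})$ by Alexander--Lefschetz duality, using $H_4(\mathcal{C})=\mathbb{Z}^{r+1}$ and $H_4(\mathbb{P}^2)=\mathbb{Z}$. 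This independence step is the main point requiring care. I expect to cite it from \cite[\S4 Prop. 1.3]{Dimca-singularities} rather than reprove it, giving $H_1\cong \mathbb{Z}^{r+1}/\langle(d_0,\dots,d_r)\rangle$.

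Finally, the algebra. Since $\gcd(d_0,\dots,d_r)=1$, the vector $v=(d_0,\dots,d_r)$ is unimodular. By Bézout there is $u\in\mathbb{Z}^{r+1}$ with $u\cdot v=1$, so $\mathbb{Z}^{r+1}=\mathbb{Z}v\oplus \ker(u\cdot)$. Equivalently, $v$ extends to a basis of $\mathbb{Z}^{r+1}$, for instance via the Smith normal form of the $1\times(r+1)$ matrix $v$, whose single invariant factor is the gcd, namely $1$. Hence the quotient is $\mathbb{Z}^{r}$.
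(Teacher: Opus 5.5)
Your proposal is correct and takes essentially the same approach as the paper. The paper also just cites Dimca's computation $H_1(\mathbb{P}^2\setminus\mathcal{C};\mathbb{Z})\cong\mathbb{Z}^{r+1}/(d_0,\dots,d_r)$ and uses the gcd condition to conclude that the quotient is free of rank $r$.

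One small slip in your optional duality remark: the relevant groups are $H_2(\mathcal{C})\cong\mathbb{Z}^{r+1}$ and $H_2(\mathbb{P}^2)\cong\mathbb{Z}$, not $H_4$. Also, that computation only determines $H^1$. It suffices here because your Step 1 surjection from $\mathbb{Z}^{r+1}/(d_0,\dots,d_r)\cong\mathbb{Z}^r$ onto a group of rank $r$ must then be an isomorphism.
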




\subsection{Milnor fiber and cohomology with local coefficients}
\label{ss:milnor&local_coeff}
For a reduced degree $d$ curve $\mathcal{C}$ in $\mathbb{P}^2$ as before, we have a global Milnor fibration $$\mathbb{C}^3 \setminus \mathcal{C}\xrightarrow{F_0 \cdots F_r}\mathbb{C}^*.$$  Consider the associated  {\it Milnor fiber} $\cF(\cC):= \{x\in\mathbb{C}^3\mid (F_0 \cdots F_r)(x)=1\}\subset\mathbb{C}^3$.
There is an action on $\mathbb{C}^{3}$ defined by the multiplication by a primitive root of unity of degree $d$. Since $F_i$ is homogeneous, this restricts to an action on $\cF(\cC)$:
$$h: \cF(\cC) \rightarrow \cF(\cC),$$
 the so-called {\it geometric monodromy}. This in turn induces an action 
$$h_*: H_*(\cF(\cC); \mathbb{Q})  \rightarrow H_*(\cF(\cC); \mathbb{Q})$$
 on the homology groups of the Milnor fiber, the {\it algebraic monodromy}. This action defines a $\mathbb{Q}[t^{\pm 1}]$-module structure on each homology group $H_*(\cF(\cC); \mathbb{Q})$, where $t$ acts as the algebraic monodromy. Since $h^d = id$,  $H_*(\cF(\cC); \mathbb{Q})$ is a torsion module that is annihilated by $(t^d-1) = \Pi_{k | d} \phi_k$, where $\phi_k$ is the {\it k}-th cyclotomic  polynomial.

Computing the algebraic monodromy is a difficult and open problem, even for $*=1$, see e.g. \cite{MP} or \cite{MPP} for results on the algebraic monodromy of the first rational homology module of the Milnor fiber for graphic arrangements or for complex reflection arrangements.

Let us recall the decomposition of $H_1(\cF(\cC); \mathbb{Q})$ with respect to the algebraic monodromy action:
$$
H_1(\cF(\cC);\mathbb{Q}) = \bigoplus_{k | d} (\mathbb{Q}[t^{\pm 1}]/ \phi_k)^{b_k}.
$$
We define the Alexander polynomial of the curve $\mathcal{C}$, denoted by $\Delta_{\mathcal{C}}(t)$, by the formula

\begin{equation}
\label{eq:alexander_poly}
\Delta_{\mathcal{C}}(t) = \prod_{k | d} \phi_{k}^{b_k}.
\end{equation}
The exponents $b_k$ in the above formula are connected to the cohomology of the complement $ \tilde X:= \mathbb{P}^2 \setminus \cC$ with coefficients in certain rank-one local systems. Let us briefly recall this connection.
Denote by $H^1(\tilde X; \mathbb{C}_{\rho})$ the cohomology with coefficients in the rank-one local system $\rho: \pi_1(\tilde X) \rightarrow \mathbb{C}^*$.
If $\rho(x) = \xi, \; \xi \in \CC^*$ fixed, for all elements $x$ of the $\mathbb{Z}$-basis, i.e.,  if $\rho$ is constant on the $\mathbb{Z}$-basis,  then the corresponding local system is called {\it equimonodromical}. In this case, we denote the local system $\mathbb{C}_{\rho}$ as $\mathbb{C}_{\xi}$ by convention.
For $k \in \mathbb{Z}_{>1}$, let $\xi_k \in \mathbb{C}$ be a primitive $k$-th root of unity.
Then it is known that
the multiplicity of $\xi_k$ as a root of the Alexander polynomial, i.e. the exponents $b_k$ from \eqref{eq:alexander_poly}, satisfy the following equality
\begin{equation}
\label{eq:local_betti}
b_k = {\rm dim}_{\mathbb{C}}H^1(\tilde X;\mathbb{C}_{\xi_k}).
\end{equation}

For a generalization of the above constructions to non-reduced curves, see \S\ref{subsec:twisted}.

\subsection {Aomoto complexes}
We need one more ingredient before we can introduce modular inequalities.
Let $X$ be a finite type CW-complex and let $\kk$ be an arbitrary field. If $\omega \in H^1(X;\kk)$ is such that $\omega^2=0$, one can define a cochain complex structure on the cohomology ring $H^*(X;\kk)$ with the differential defined by the multiplication by $\omega$:

\begin{equation}
\label{eq:Aomoto_complex}
H^0(X;\kk) \overset{ \cdot \wedge \omega} \longrightarrow H^1 (X;\kk) \overset{ \cdot\wedge \omega} \longrightarrow H^2(X;\kk) \overset{\cdot \wedge \omega} \longrightarrow \dots
\end{equation}
This cochain complex is called the {\it Aomoto complex associated with~$\omega$}. 
\subsection{Modular inequalities for curve complements}
Generalizing a result by Cogolludo for rational curves from \cite{C}, Cogolludo-Matei give in \cite{CM} a presentation of the cohomology module of  the complement of an arbitrary reduced curve in the complex projective plane. Consider a projective curve $\cC\subset\PP^2$ and let $\cC_0$ be a line transversal to $\cC$. 
We define $X=\PP^2\setminus (\cC_0\cup\cC)$ and we will refer to $X$ as the \emph{affine complement} of $\cC$. Note that the topology of $X$ does not depend on the choice of $\cC_0$.
It follows by \cite[Theorem 3.1]{CM} that $H^1(X;\CC)$ is generated, as a vector space, 
by the logarithmic $1$-forms $\sigma_i:=d\log\left({F_i} / {F_0^{d_i}}\right)$ with $1 \leq i \leq r$, where $F_i$ is an irreducible polynomial defining the irreducible component $\cC_i$ of $\mathcal{C}$, as in \cite[Section 3]{CM}. 

Assume there is a preferred set of generators for $\pi_1(X)$, then a homomorphism $\omega : \pi_1(X) \rightarrow \mathbb{Z}$ is completely  defined by a set of $r$ integers $(m_i)_{1 \leq i \leq r}$, the images of the generators of $ \pi_1(X)$ in $\mathbb{Z}$. Note that $\Hom_{\mathbb{Z}}(\pi_1(X), \mathbb{Z}) \equiv \Hom_{\mathbb{Z}}(H_1(X), \mathbb{Z})  \equiv H^1(X;\ZZ)$, so we can identify $\omega$ with an element  $\w=\sum_i m_i \sigma_i$ in $H^1(X)$.

For a given complex number $\zeta \in \mathbb{C}^*$, one can define a rank-one local system $\rho_{\omega, \zeta}$ of coefficients, i.e., an element of $\Hom_{\mathbb{Z}}(\pi_1(X), \mathbb{C}^*)$ by 
$$\rho_{\omega, \zeta}(x):=\zeta^{\omega(x)}.$$

If $\zeta$ is a primitive root of unity, then $\rho_{\omega, \zeta}$ is called a {\it rational} local system.\\

By Proposition \ref{prop:complementC_tors_free}, the integral homology of the affine complement $X$ of $\cC$ is torsion free. 
It follows that the modular inequalities stated by Papadima-Suciu in \cite[Theorem C]{PS2} hold for $X$. 
We recall a version of their result stated in our particular context. 
In order to do so, let us fix some notation.
Let $\kk=\ZZ_p$ for a prime $p$ denote the finite field with $p$ elements and let $\pi_1(X) \overset{\omega }{\longrightarrow} \mathbb{Z}\to \kk$ denote the homomorphism defined by the classes of the integers $(m_i)_{1 \leq i \leq r}$ in $\kk$. Denote 
$\omega_{\kk} = \sum_i m_i \sigma_i \in H^1(X;\kk)$.

\begin{theorem}{{\rm{\cite[Theorem C]{PS2}}}}
\label{thm:thmC_PS}
Let $X$ be the affine complement of a projective curve $\cC$, $p$ a prime number, $\xi$ a primitive root of the unity of order $p^s$, and let $\rho =  \rho_{\omega, \xi}$. Then 
\begin{equation}
\label{eq:mod_PS_gen} 
\dim_{\mathbb{C}}H^1(X;\mathbb{C}_\rho)  \leq {\rm dim}_{\kk} H^1(H^*(X;\kk), \cdot\wedge\omega_{\kk}).
\end{equation}
\end{theorem}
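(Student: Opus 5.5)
This is the Papadima--Suciu modular inequality specialized to $X$. My plan is to reproduce their argument in our setting; the hypothesis it needs is that $H_1(X;\ZZ)$ is torsion free. The argument passes through the finite abelian $p$-group generated by $\xi$, and then through a mod $p$ reduction.

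\textbf{Step 1 (from $\CC$ to $\ZZ_p$-coefficients).} Since $\xi$ has order $p^s$, the representation $\rho$ factors as
$$\pi_1(X)\xrightarrow{\omega}\ZZ\to\ZZ_{p^s}\to\CC^*.$$
Let $X_{p^s}\to X$ be the associated cyclic $p^s$-fold cover. Then $H^1(X;\CC_\rho)$ is the $\xi$-eigenspace of the deck transformation acting on $H^1(X_{p^s};\CC)$. Equivalently, writing $R=\ZZ[t]/(t^{p^s}-1)$,
$$\dim_\CC H^1(X;\CC_\rho)=\dim H^1\bigl(X;\ZZ[\xi]_\rho\bigr)\otimes\QQ(\xi)$$
is the rank over the ring of integers $\ZZ[\xi]$. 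The key fact is that $\ZZ[\xi]$ is a DVR-like ring at the prime $\mathfrak p=(1-\xi)$, with residue field $\ZZ[\xi]/\mathfrak p=\ZZ_p$. Moreover $\rho\equiv 1 \pmod{\mathfrak p}$, because $\xi\equiv 1$.

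\textbf{Step 2 (semicontinuity).} Work with the cellular cochain complex $C^*(\widetilde X;\ZZ[\xi]_{\mathfrak p})$ twisted by $\rho$; this is a complex of finite free modules over the local PID $\ZZ[\xi]_{\mathfrak p}$. The universal coefficient theorem for the reduction mod $\mathfrak p$ gives
$$\dim_\CC H^1(X;\CC_\rho)\le \dim_{\kk} H^1(X;\kk),$$
where $\kk$ carries the trivial local system. This bound is too weak, so I refine it. Filter by powers of $\mathfrak p$, i.e. consider the $\mathfrak p$-adic (Bockstein) spectral sequence of the complex $C^*\otimes\ZZ[\xi]_{\mathfrak p}$. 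Its $E_1$-page is $H^*(X;\kk)$. Its $E_\infty$-page has total rank in degree $1$ at least $\dim_\CC H^1(X;\CC_\rho)$, the free part.

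\textbf{Step 3 (identify $d_1$).} I must show that the first differential $d_1$ is, up to a unit, cup product with $\omega_\kk$. This is where $H_1(X;\ZZ)$ torsion free (Proposition~\ref{prop:complementC_tors_free}) enters: it ensures that $H^1(X;\ZZ)\to H^1(X;\kk)$ is onto, so $\omega_\kk$ lifts to an integral class. Write $\rho(g)=\xi^{\omega(g)}=1+\omega(g)(\xi-1)+O((\xi-1)^2)$. Then the twisted coboundary satisfies
$$\delta_\rho=\delta+(\xi-1)\,(\cdot\cup\omega)+O((\xi-1)^2)$$
at cochain level, using the standard cup product formula on the universal abelian cover. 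Hence $d_1=\cdot\wedge\omega_\kk$ on $H^*(X;\kk)$. Since $E_2=H^*(H^*(X;\kk),\cdot\wedge\omega_\kk)$ and $\dim E_\infty^1\le\dim E_2^1$, the inequality \eqref{eq:mod_PS_gen} follows.

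The main obstacle is Step 3. It requires checking carefully that the first-order term of the twisted differential induces the cup product with $\omega$ on cohomology, and handling the fact that $\mathfrak p$ is ramified, since $p=\text{unit}\cdot(1-\xi)^{\phi(p^s)}$. I would first try the cleaner route of passing to $\ZZ_p[t]/(t-1)^{p^s}$, i.e. $\kk[\ZZ_{p^s}]$, which is a local Artinian ring with maximal ideal $(t-1)$. There I would compare $\CC$ and $\kk$ ranks via the Fox-calculus Alexander matrix, whose linearization is exactly the Aomoto matrix of $\omega_\kk$. This is Papadima--Suciu's approach, and for $X$ I would simply invoke \cite[Theorem C]{PS2} once torsion freeness is verified.
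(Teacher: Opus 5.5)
Your proposal is correct, and its closing sentence is exactly the paper's proof. The paper only observes that $H_1(X;\ZZ)$ is torsion-free by Proposition~\ref{prop:complementC_tors_free} and then quotes \cite[Theorem C]{PS2}. The proposition applies because the transversal line $\cC_0$ has degree $1$, so the gcd of the degrees is $1$; you should say this explicitly.

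Your Steps 1--3 go further: they amount to a self-contained proof via the $\mathfrak p$-adic Bockstein spectral sequence over $\ZZ[\xi]_{\mathfrak p}$. This route works: $E_1=H^*(X;\kk)$, $\dim E_\infty^1=\dim_\CC H^1(X;\CC_\rho)$, and $E_2^1$ is the Aomoto cohomology once $d_1=\cdot\wedge\omega_\kk$. However, you have misplaced where the hypothesis is used.

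The class $\omega_\kk$ is by definition the reduction of the integral class $\omega$, so nothing needs lifting there. What must be controlled is an arbitrary $\bar x\in H^1(X;\kk)$. Let $x$ be an integral cochain reducing to a mod $p$ cocycle, so $\delta x=pz$. Write $\pi=\xi-1$ and $p=u\pi^{e}$ with $u$ a unit and $e=p^{s-1}(p-1)$. The expansion $\delta_\rho=\delta+\pi(\omega\cup\cdot)+O(\pi^2)$ then gives, up to sign, $d_1[\bar x]=[\omega_\kk\cup\bar x]+[\overline{u\pi^{e-1}z}]$.

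When $e\geq 2$ the second term vanishes modulo $\pi$. So the ramification you list as an obstacle is precisely what makes $d_1$ equal to the Aomoto differential. Only for $p^s=2$ does the term survive, as the ordinary Bockstein $\bar x\mapsto\bar x^2$. There, torsion-freeness is what kills it: every $\bar x$ lifts to an integral cocycle, so one may take $z=0$.

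With that correction your argument proves the inequality outright. The paper's citation buys brevity, while your route explains why the torsion-freeness hypothesis is there.
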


\begin{remark}
\label{rem:proj/affine}
If $X$ is the affine complement of the curve $\cC$ and $\tilde X:=\PP^2 \setminus \cC$, then we have a surjection
$$\pi_1(X) \overset{i_*}\surj \pi_1(\tilde X)$$ induced by the inclusion $X \hookrightarrow \tilde X$. Consequently, any character $\rho: \pi_1(\tilde X) \rightarrow \CC^*$ induces a character $i_*\rho: \pi_1(X) \rightarrow \CC^*$. In particular, 
$$ {\rm dim}_{\mathbb{C}}H^1(\tilde X;\mathbb{C}_{\rho}) \leq  {\rm dim}_{\mathbb{C}}H^1(X;\mathbb{C}_{i_*\rho}).$$ 
\end{remark}

By \eqref{eq:local_betti} and Remark  \ref{rem:proj/affine}, Theorem \ref{thm:thmC_PS} provides an upper bound on the multiplicities $b_k$ of the primitive roots of unity of order $k$ of the Alexander polynomial $\Delta_{\cC}$, when $k=p^s$ for some prime $p$ and some integer $s \geq 1$:

 \begin{equation}
 \label{eq:mod_ineq}
b_k \leq {\rm dim}_{\kk} H^1(H^*(X;\kk), \cdot \wedge \omega_1),
 \end{equation}
 where $\omega_1:= \sum_i \sigma_i \in H^1(X;{\kk})$.

\section{The Combinatorial Aomoto Complex}
\label{sec:gen_Aomoto}
The purpose of this section is to present a complex that is quasi-isomorphic to the standard Aomoto complex,
and whose definition only depends on the weak combinatorial type of a curve.

\subsection{The weak combinatorial type of a projective curve}
\label{ss:weak_comb}
We will assume that $\cC=\cC_1\cup \ldots \cup\cC_r$ is a plane projective curve, that is, the reduced structure of the zero set of a homogeneous polynomial in three variables. Now we define the
\emph{weak combinatorial type of $\cC$} as follows.
Let $S=\Sing(\cC)$ be the collection of singular points of $\cC$ and $\mathbf r=\{1,...,r\}$ the set of
subindices of the irreducible components of $\cC$.
For any $P\in S$, let $\Delta_P$ be the collection of irreducible local branches of $\cC$ at $P$.
Consider the following map $\phi_P:\Delta_P\to \mathbf r$, where $\phi_P(\delta)=i$ if $\cC_i$ is such that $\delta$ is a branch in $(\cC_i,P)$. Define by $\SP^2_{\phi_P}(\Delta_P)$ the symmetric
sum of $\Delta_P$ outside of the $\phi_P$-diagonal, that~is
$$
\SP^2_{\phi_P}(\Delta_P):=\{(\delta_1,\delta_2)\in \Delta_P^2\mid \phi_P(\delta_1)\neq\phi_P(\delta_2)\}/\Sigma_2.
$$
One has the map $\mu_P:\SP^2_{\phi_P}(\Delta_P)\to \NN$, where $\mu_P(\delta_1,\delta_2)$ is the intersection
number of $\delta_1$ and $\delta_2$ at~$P$.
Then the list $W_\cC:=(\mathbf r,S,\{\Delta_P,\phi_P,\mu_P\}_{P\in S})$ is the \emph{weak combinatorial type} of $\cC$.
In~\cite{CM}, the \emph{combinatorial type} of $\cC$ is defined as $\tilde W_\cC:=(W_\cC,\bar d,\bar g)$, where $W_\cC$ is the weak combinatorial type of $\cC$, $\bar d = (d_1,\dots,d_r)$ is the list of degrees, and $\bar g$ is the list of genera of the irreducible components of $\cC$. Note that, according to~\cite{CM}, $\tilde W_\cC$ determines the ring structure $H^*(X;\ZZ)$ of the affine complement $X$ of~$\cC$ with respect to a line at infinity in general position.

\subsection{A graded algebra associated with a weak combinatorial type}
\label{subsect:GAC}
Let $\cC$ be a plane curve and $W=(\mathbf r,S,\{\Delta_P,\phi_P,\mu_P\}_{P\in S})$ its weak combinatorial type,
$\kk=\ZZ_p$ with $p$ being prime. 
For simplicity we will assume $\# \mathbf r>2$. In that case one can recover the degrees $d_i$ of the components as follows. For every pair of elements $i,j\in \mathbf r$, $i\neq j$, define $$d_{ij}:=\sum_{\substack{P \in S,\\  \phi_P(\delta_i)=i,\\ 
\phi_P(\delta_j)=j}}
\mu_P(\delta_i,\delta_j).$$
Note that $d_{ij}=d_{ji}=\cC_i\cdot \cC_j$. Then one can recover $d_i$ by considering any triple of different indices $i,j,k\in \mathbf r$ (which exists by hypothesis) as
$
d_i:=\sqrt{\frac{d_{ij}d_{ik}}{d_{jk}}}.
$

We consider
\begin{enumerate}
\item $A_W^0:=\kk$,
\item
$A_W^1:=\bigoplus_{i\in \mathbf r} \sigma_i\kk$,
\item
$A_W^2:=A_W^{2,0}\oplus A_W^{2,\infty}$, where
$$
A_W^{2,\infty}:=\langle \bar\psi^i_\infty : i\in\mathbf{r}\rangle_\kk/\langle \bar\psi^i_\infty: p\nmid d_i \rangle_\kk,\quad
A_W^{2,0}:=\bigoplus_{P\in S} \frac{A_P\wedge A_P}{I_P},\quad
A_P:=\langle \psi_P^\delta : \delta\in \Delta_P\rangle_\kk
$$
and
\begin{equation}
\label{eq-ip}
I_P:=\{
\psi_P^{\delta_1}\wedge \psi_P^{\delta_2}+\psi_P^{\delta_2}\wedge \psi_P^{\delta_3}+
\psi_P^{\delta_3}\wedge \psi_P^{\delta_1}\}+
\{\psi_P^{\delta_1}\wedge \psi_P^{\delta_2}\mid \#\phi_P(\Delta_P)=1\}\subset A_P\wedge A_P.
\end{equation}
\item $A_W^i=0$ ($i\geq 3$).
\end{enumerate}
Finally, we turn $A^*_W$ into a graded algebra by means of the following product
\begin{equation}
\label{eq-prod}
\sigma_i\wedge \sigma_j:=
\sum_{\substack{P \in S,\\  \delta_1\in\phi_P^{-1}(i),\\ \delta_2\in\phi_P^{-1}(j)}}
\mu_P(\delta_1,\delta_2)\ \psi_P^{\delta_1}\wedge \psi_P^{\delta_2}
+d_j\bar\psi_\infty^i-d_i\bar\psi_\infty^j.
\end{equation}

\begin{definition}
\label{def:GAComplex}
Given an element of degree one, say $\omega\in A_{W,\kk}^1$, one can consider the complex
$(A_{W,\kk}^*,\cdot\wedge\w)$ described by using the algebra structure as follows:
\begin{equation}
\label{eq:GAComplex}
\array{ccccccccc}
0 & \to & A_W^0 & \rightmap{\wedge \omega} & A_W^1 & \rightmap{\wedge \omega} & A_W^2 & \to & 0\\
&& k & \mapsto & k\omega &&&&\\
&&&& \alpha & \mapsto & \alpha \wedge \omega. &&
\endarray
\end{equation}
Such a complex $(A^*_{W,\kk},\cdot\wedge\w)$ will be called the \emph{Combinatorial Aomoto Complex of $W$} with coefficients in~$\kk$. The reference to $\kk$ will be omitted if the field is clear from the context.
\end{definition}

\begin{remark}
\label{rem-basis-A2}
In order to describe a matrix of the homomorphism
$A_W^1 \ \rightmap{\wedge \omega} \ A_W^2$ we need a basis for
$A_W^2$ (note that $\{\sigma_i\}_{i\in \mathbf r}$ is a basis for the vector space $A_W^1$).
We will present a basis that will come in handy in the future.
\begin{itemize}
\label{rem:basis-A2}
\item
For any point $P\in S$ one can choose a local branch $\bar\delta\in \Delta_P$
(resp. an element $\psi_P^{\bar\delta}\in A_P$), which will be referred to as the
\emph{preferred branch at $P$} (resp. \emph{preferred form at $P$}).
\item
Using this notation, we observe that the set
\begin{equation}
\label{eq:basis-A2}
\{\bar\psi_P^{\delta}:=\psi_P^{\bar\delta}\wedge \psi_P^{\delta}\}_{P\in S}
\end{equation}
is a basis of the vector space~$A^{2,0}_W$.
\end{itemize}
One has the relation 
$\psi_P^{\delta_i}\wedge\psi_P^{\delta_j}=\bar\psi_P^{\delta_j}-\bar\psi_P^{\delta_i},$
where $\bar\psi_P^{\bar \delta}=0$ for the preferred branch $\bar\delta$ by convention.

With respect to this basis, the product $A^1_W\wedge A^1_W$ can be defined as
\begin{equation}
\label{eq:product2}
\sigma_i\wedge\sigma_j:=
\sum_{P\in S} (\mu_P(\delta_j,\cC_i)\bar\psi_P^{\delta_j}-\mu_P(\delta_i,\cC_j)\bar\psi_P^{\delta_i})
+d_j\bar\psi^i_\infty-d_i\bar\psi^j_\infty,
\end{equation}
extending by bi-linearity, where $\mu_P(\delta_i,\cC_j)$ denotes the multiplicity of the intersection of
the branch $\delta_i$ and the curve $\cC_j$ at $P$, i.e.,
${\displaystyle{
\mu_P(\delta_i,\cC_j)=\sum_{\phi_P(\delta_j)=j} \mu_P(\delta_i,\delta_j)}}$.
\end{remark}

As usual, one can consider the \emph{cohomology jumping loci} of such a family of complexes
parametrized by $A_W^1$ to obtain the family of resonance varieties associated with $A_{W,\kk}^*$.

\begin{definition}
The subset of $A_{W,\kk}^1$
$$\cR_k(W,\kk):=\{\omega \in A_{W,\kk}^1 \mid \dim_\kk H^1(A_{W,\kk}^*,\cdot\wedge \w)\geq k\}$$
is called the \emph{$k$-th combinatorial resonance variety} of $W$. 
\end{definition}

Associated with the affine complement of $\cC$, i.e., $X=\PP^2\setminus (\cC_0\cup\cC)=\CC^2\setminus\cC$ and with
any $1$-cohomology form $\omega\in H^1(X;\kk)$, one can define the cohomology Aomoto complex with respect to
a field $\kk$ as $(H^*(X;\kk),\cdot\wedge\w)$. The \emph{$k$-th resonance variety}
$\cR_k(\cC,\kk)$ of $\cC$ can be defined as the jumping loci of the family of cohomology Aomoto complexes
parametrized by
$$
\cR_k(\cC,\kk)=\{\w\in H^1(X;\kk)\mid \dim_\kk H^1(H^*(X;\kk),\cdot\wedge\w)\geq k\}
$$

\begin{thm}
\label{thm:resonance}
Under the above hypotheses, $\cR_k(\cC,\kk)\cong\cR_k(W_\cC,\kk)$.
\end{thm}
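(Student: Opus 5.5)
The plan is to relate the two families of complexes through the presentation of $H^*(X;\ZZ)$ given in \cite[Theorem 3.1]{CM}. By \cite{CM}, $H^1(X;\ZZ)$ is free with basis $\{\sigma_i\}_{i\in\mathbf r}$. Moreover, $H^2(X;\ZZ)$ is generated by local classes at the singular points, which correspond to the $\psi_P^\delta\wedge\psi_P^{\delta'}$ modulo the relations $I_P$. It also carries classes at infinity, coming from the intersections of the components with the line $\cC_0$ together with the genus contributions. The cup product of $\sigma_i$ and $\sigma_j$ is given by the formula \eqref{eq-prod}. After tensoring with $\kk$ (the homology is torsion free by Proposition~\ref{prop:complementC_tors_free}, so the universal coefficient theorem applies), we get $H^0(X;\kk)=\kk=A^0_W$ and $H^1(X;\kk)\cong A^1_W$ via $\sigma_i\mapsto\sigma_i$. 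The goal is then a linear map $\Phi\colon H^2(X;\kk)\to A^2_W$, or possibly one in the opposite direction, such that $\Phi(\sigma_i\cup\sigma_j)=\sigma_i\wedge\sigma_j$. Together with the identity in degrees $0$ and $1$, this gives a morphism of complexes $(H^*(X;\kk),\cdot\wedge\w)\to(A^*_{W,\kk},\cdot\curlywedge\w)$ for every $\w$.

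The key observation is that $H^1$ of an Aomoto complex depends only on $\ker(\cdot\wedge\w\colon H^1\to H^2)$ and on the image $\kk\w$ of $H^0$. If $\Phi$ is injective on the subspace $H^1(X;\kk)\cup H^1(X;\kk)\subseteq H^2(X;\kk)$, then the kernels of $\cdot\wedge\w$ on $H^1$ agree in both complexes. It follows that $\dim H^1$ agrees for every $\w$, and so the jumping loci coincide under the identification $H^1(X;\kk)\cong A^1_W$. Injectivity on all of $H^2$ is therefore not needed; only injectivity on the span of products is, which also explains why the complexes need not be quasi-isomorphic.

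The steps, in order, are as follows.
\begin{enumerate}
\item Use the description of the generators of $H^2(X;\ZZ)$ in \cite{CM} to write $H^2(X;\kk)=H^{2,0}\oplus H^{2,\infty}\oplus H^{2,g}$. Here $H^{2,0}$ consists of the local contributions at the points $P\in S$, $H^{2,\infty}$ of the contributions at infinity, and $H^{2,g}$ of the genus part. The genus part is never hit by products of the $\sigma_i$ and may be discarded.
\item Check that the local part $H^{2,0}\otimes\kk$ is exactly $\bigoplus_P (A_P\wedge A_P)/I_P$, with the relations in $I_P$ coming from the local structure at $P$.
\item Identify the image of the products in the part at infinity.
\item Define $\Phi$ as the projection killing the genus part, and verify from \eqref{eq:product2} that $\Phi$ is injective on the span of products.
\end{enumerate}

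The main obstacle is the part at infinity (step 3). In $H^*(X;\ZZ)$ the contribution of $\sigma_i\cup\sigma_j$ at infinity is $d_j\psi^i_\infty-d_i\psi^j_\infty$ in some integral lattice. In $A^{2,\infty}_W$, by contrast, the generators $\bar\psi^i_\infty$ with $p\nmid d_i$ have been killed. I would check that, modulo $p$, the span of the vectors $d_j e_i-d_ie_j$ maps isomorphically onto its image in this quotient. The argument is to pass to the basis adapted to $d_i \bmod p$, using some index $i_0$ with $p\nmid d_{i_0}$ when one exists. A dimension count, distinguishing whether $p$ divides all $d_i$, should show that no product loses information. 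This bookkeeping, together with making sure that the recovery of the $d_i$ from $W$ (which uses $\#\mathbf r>2$) is compatible with the integral presentation, is where I expect the real work to lie.
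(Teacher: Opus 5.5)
Your framework is sound and close to the paper's. You identify degrees $0$ and $1$, discard the genus summand (never hit by products), and observe that $\dim H^1$ depends only on $\ker(\cdot\wedge\w)$ in degree one. Hence it suffices that $H^2(X;\kk)\to A^2_W$ be injective on products, and your direction is in fact the natural one for a chain map.

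\textbf{The gap.} It is in the step you single out as the main obstacle. You claim that, modulo $p$, the span of the infinity parts $d_je_i-d_ie_j$ maps isomorphically onto its image in $A^{2,\infty}_W$. This is false. Put $I=\{i: p\nmid d_i\}$. The vectors with $i,j\in I$ span the whole hyperplane $\{\sum_{i\in I}d_iv_i=0\}$ of $\kk^{I}$, and the quotient defining $A^{2,\infty}_W$ kills all of it. For instance, for three conics and $p$ odd one has $A^{2,\infty}_W=0$. Yet the products have nonzero infinity parts $2\bar\psi^i_\infty-2\bar\psi^j_\infty$, and these are all sent to zero. So products genuinely lose information at infinity, and no dimension count confined to the infinity part can give the injectivity you need.

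\textbf{The missing idea.} The infinity coefficients of a product are controlled by its finite local coefficients, via Bezout's theorem. Write $\alpha\wedge\w=\sum\lambda_{\delta,P}\bar\psi^\delta_P+\sum_i\lambda_i\bar\psi^i_\infty$, where
\begin{itemize}
\item $\lambda_{\delta,P}=\sum_k\mu_P(\delta,\cC_k)(\alpha_k\beta_i-\alpha_i\beta_k)$ for $\phi_P(\delta)=i$,
\item $\lambda_i=\sum_k d_k(\alpha_k\beta_i-\alpha_i\beta_k)$.
\end{itemize}
Bezout gives $\sum_{P\in S}\sum_{\phi_P(\delta)=i}\mu_P(\delta,\cC_k)=\cC_i\cdot\cC_k=d_id_k$. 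Summing over all branches of $\cC_i$ therefore yields $\sum_{P,\,\phi_P(\delta)=i}\lambda_{\delta,P}=d_i\lambda_i$.

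Now suppose the image of $\alpha\wedge\w$ in $A^2_W$ vanishes. Then every $\lambda_{\delta,P}$ vanishes, so $d_i\lambda_i=0$, and $\lambda_i=0$ whenever $p\nmid d_i$. The $\lambda_i$ with $p\mid d_i$ vanish directly, because those classes survive in $A^{2,\infty}_W$.

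This is exactly the paper's argument, and it is the reason $A^{2,\infty}_W$ is defined as that quotient. The same computation applies to any antisymmetric combination $\sum c_{ab}\,\sigma_a\wedge\sigma_b$. So with it, your injectivity on the span of products does hold; without it, your Step~4 has no valid justification.
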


\begin{proof}
Since the $k$-th resonance variety $\cR_k(\cC,\kk)$ can be computed as the first cohomology jumping loci of the Aomoto complex $(H^*(X;\kk),\cdot\wedge\w)$ of the affine complement $X$ of $\cC$, it is enough to show that the Aomoto complex $(H^*(X;\kk),\cdot\wedge\w)$ is 1-quasi-isomorphic to the Combinatorial Aomoto complex $(A^*_{W_\cC},\cdot\wedge\w)$, that is,
there is a morphism of complexes producing $H^k(A^*_{W_\cC},\cdot\wedge\w)\cong H^k(H^*(X;\kk),\cdot\wedge\w)$ for $k\leq 1$. 
In order to distinguish both products, we will denote by $\cdot\curlywedge\w$ the product in $A^*_{W_{\cC}}$ as defined in~\eqref{eq:product2} and by $\cdot\wedge\w$ the product in~$H^*(X;\kk)$.

For simplicity, let us denote $W=W_\cC$.
Note that $H^k(X;\kk)\cong A^k_W$ as long as $k\neq 2$. Recall from \cite{CM} that $H^2(X;\kk)$ can be decomposed as
$H^2(X;\kk)=V_0\oplus V_\infty\oplus V_g\oplus \bar V_g$, where $V_0\cong A^{2,0}_W$. 
This isomorphism is given by $\psi_P^{\delta,\delta'}\mapsto \psi_P^\delta\wedge\psi_P^{\delta'}$.
One can consider an intermediate graded ring $\tilde A^*$ defined as
$$
\tilde A^k=
\begin{cases}
V_0\oplus V_\infty\cong H^2(X;\kk)/(V_g\oplus \bar V_g) & \textrm{ if } k = 2\\
A^k_W & \textrm{ otherwise,}
\end{cases}
$$
where $\sigma_i\wedge\sigma_j$ is inherited by the product in $H^*(X;\kk)$. Note that
there is a projection morphism $H^k(X;\kk)\to \tilde A^k$ satisfying that
$\sigma_i\wedge\sigma_j\in V_0\oplus V_\infty$, which implies 
$H^1(H^*(X;\kk),\cdot\wedge\w)\cong H^1(\tilde A^*,\cdot\wedge\w)$,  
and hence
the projection morphism $(H^*(X;\kk),\cdot\wedge\w)\to (\tilde A^*,\cdot\wedge\w)$ is a 1-quasi-isomorphism.

Consider now the morphism $(A^*_W,\cdot\curlywedge\w) \to  (\tilde A^*,\cdot\wedge\w)$
given by the identity if $* \neq 2$ and, for $*=2$, defined on the generators by
$\psi_P^\delta\wedge\psi_P^{\delta'} \mapsto \psi_P^{\delta,\delta'}$, 
$\bar \psi_\infty^i\mapsto \sum_{k_i=2}^{d_i}\psi_\infty^{i,k_i}$. 
Note that we can consider $A^2_W \subset \tilde A^2$ via this correspondence.
We want to show that
 $(A^*_W,\cdot\curlywedge\w) \to  (\tilde A^*,\cdot\wedge\w)$ is a 1-quasi-isomorphism.
 This is equivalent to proving that
$\tilde K=\ker \tilde A^1\xrightarrow{\wedge\omega}\tilde A^2=\ker A_W^1\xrightarrow{\curlywedge\omega}A_W^2=K$.
By construction, it is clear that $\tilde K\subset K$. For the converse, assume
$\alpha=\sum_{i} \alpha_i\sigma_i\in \tilde A^1=A^1_W$,
$\omega=\sum_i \beta_i\sigma_i\in \tilde A^1=A^1_W$. Note that
\begin{equation}
\label{eq:awb}
\alpha\wedge \omega=\sum_P \lambda_{\delta,P} \bar\psi_P^\delta + \sum_{i=1}^r \lambda_i \bar\psi^i_\infty.
\end{equation}
where the equality \eqref{eq:awb} is seen in $\tilde A^2$,  $\bar\psi^i_\infty:=\sum_{k_i=2}^{d_i}\psi^{i,k_i}_\infty$ and $\bar\psi_P^\delta := \psi_P^{\bar \delta, \delta}$.
One needs to show that $\lambda_i=0$ if $\alpha \in K$ and $p\nmid d_i$.
Using definition~\eqref{eq:product2}, one obtains
\begin{equation}
\label{eq:lambda_i}
    \lambda_i=\sum_{k=1}^r d_k(\alpha_k\beta_i-\alpha_i\beta_k)=
\left| \array{cc} \beta_i & \alpha_i \\ \sum_k d_k\beta_k & \sum_k d_k\alpha_k \endarray\right|.
\end{equation}
On the other hand, by~\eqref{eq:awb} and~\eqref{eq:product2}, if $\phi_P(\delta)=i$, then one has
$$
\lambda_{\delta,P}=\sum_{k=1}^r \mu_P(\delta,\cC_k)(\alpha_k\beta_i-\alpha_i\beta_k).
$$
The condition $\alpha \in K$ implies $\lambda_{\delta,P}=0$, i.e.
\begin{equation}
\label{eq:det}
\left| \array{cc} \beta_i & \alpha_i \\
\sum_k \mu_P(\delta,\cC_k)\beta_k & \sum_k \mu_P(\delta,\cC_k)\alpha_k \endarray \right|=0.
\end{equation}
Note that, Bezout's theorem implies $\sum_{P\in S,\phi_P(\delta)=i} \sum_k \mu_P(\delta,\cC_k)\beta_k = 
d_i \sum_k d_k \beta_k$ and, likewise, $\sum_{P\in S,\phi_P(\delta)=i} \sum_k \mu_P(\delta,\cC_k)\alpha_k =d_i \sum_k d_k \alpha_k$.
Hence, summing the determinants \eqref{eq:det} over $(P\in S,\phi_P(\delta)=i)$ we get
 $$0=\sum_{P\in S,\phi_P(\delta)=i}\lambda_{\delta,P}= \left| \array{cc} \beta_i & \alpha_i \\
 d_i \sum_k d_k \beta_k & d_i \sum_k d_k \alpha_k \endarray \right| = d_i\lambda_i,$$ where the last equality follows by \eqref{eq:lambda_i}. Hence $d_i\lambda_i=0$, which implies $\lambda_i=0$, since we are assuming $p\nmid d_i$.
This ends the proof.
\end{proof}

The conditions described in~\eqref{eq:det} will be frequently used in the rest of the paper. We will often refer to them as \emph{resonance conditions}.

\begin{corollary}
\label{cor:eq_det}
Let $\alpha, \beta \in A^1_{W_\cC}$ given by $\alpha = \sum_i \alpha_i \sigma_i, \; \beta = \sum_i \beta_i \sigma_i$. Then $\alpha \curlywedge \beta=0$ implies that the resonance conditions \eqref{eq:det} hold for all $P \in \Sing(\cC)$ and all $\delta \in \Delta_P$. Moreover, the resonance conditions~\eqref{eq:det} are also sufficient in the statement above if $p\nmid d_i$ for all $i\in\mathbf{r}$.
\end{corollary}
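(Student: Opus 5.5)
The plan is to expand $\alpha\curlywedge\beta$ in the basis of $A^2_W$ from Remark~\ref{rem-basis-A2}, namely $\{\bar\psi_P^\delta\}_{P\in S,\,\delta\neq\bar\delta}$ together with the classes $\bar\psi^i_\infty$ with $p\mid d_i$. We then read off coefficients, exactly as in the proof of Theorem~\ref{thm:resonance}.

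\textbf{Expanding the product.} By bilinearity and~\eqref{eq:product2}, the product can be written as
$$\alpha\curlywedge\beta=\sum_{P}\sum_{\delta}\lambda_{\delta,P}\,\bar\psi_P^\delta+\sum_i\lambda_i\bar\psi^i_\infty .$$
For $\phi_P(\delta)=i$ the coefficient is
$$\lambda_{\delta,P}=\sum_k\mu_P(\delta,\cC_k)(\alpha_k\beta_i-\alpha_i\beta_k),$$
which up to sign is the determinant in~\eqref{eq:det}. The coefficient $\lambda_i$ is given by~\eqref{eq:lambda_i}.

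\textbf{Handling the preferred branch.} One point needs care: $\bar\psi_P^{\bar\delta}=0$, so the coefficient of the preferred branch does not appear directly. We use the relation in $A_P\wedge A_P/I_P$ to see that the determinants over all branches at $P$ sum to zero. Indeed, $\sum_{\delta\in\Delta_P}\lambda_{\delta,P}=0$, because the contribution of each pair $(\delta_1,\delta_2)$ with distinct components appears with opposite signs. The argument is symmetric in the choice of preferred branch, which can also be changed freely. Hence vanishing of all the non-preferred coefficients forces $\lambda_{\bar\delta,P}=0$ as well.

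\textbf{Necessity.} Assume $\alpha\curlywedge\beta=0$. Since the $\bar\psi_P^\delta$ for $\delta\neq\bar\delta$ are linearly independent and lie in the summand $A^{2,0}_W$, independent from $A^{2,\infty}_W$, every $\lambda_{\delta,P}$ with $\delta\neq\bar\delta$ vanishes. By the previous step, $\lambda_{\bar\delta,P}$ vanishes too. This gives~\eqref{eq:det} for all $P$ and all $\delta\in\Delta_P$.

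\textbf{Sufficiency when $p\nmid d_i$ for all $i$.} In this case $A^{2,\infty}_W=0$ by definition, so $\alpha\curlywedge\beta$ equals its $A^{2,0}_W$-component $\sum\lambda_{\delta,P}\bar\psi^\delta_P$. This component vanishes by the resonance conditions. Alternatively, one can mirror the Bezout summation from the proof of Theorem~\ref{thm:resonance}: it gives $d_i\lambda_i=0$, hence $\lambda_i=0$. This route is not even needed here, since those terms are already zero in $A^2_W$.

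The main (minor) obstacle is the bookkeeping for the preferred branch and the correct sign convention relating $\lambda_{\delta,P}$ to the determinant~\eqref{eq:det}. Everything else is coefficient comparison.
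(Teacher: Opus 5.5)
Your argument is correct and is essentially the paper's own: the paper simply points to the coefficient comparison in the proof of Theorem~\ref{thm:resonance}. Your identity $\sum_{\delta\in\Delta_P}\lambda_{\delta,P}=0$ handles the preferred branch, which the paper leaves implicit. At points with $\#\phi_P(\Delta_P)=1$ the local summand $A_P\wedge A_P/I_P$ is zero, but there the conditions~\eqref{eq:det} hold trivially.

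You are also right that sufficiency is immediate in $A^2_W$, since $A^{2,\infty}_W=0$ when $p\nmid d_i$ for all $i$. The paper's Bezout step $d_i\lambda_i=0$ is what gives the same conclusion for the product in $\tilde A^2$, equivalently in $H^2(X;\kk)$.
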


\begin{proof}
It follows immediately from the proof of Theorem \ref{thm:resonance}.
\end{proof}

\begin{remark}
\label{rem:Falk_matroid_thms}
Corollary~\ref{cor:eq_det} is an analog of \cite[Theorem 3.5]{F}, a result on matroids, and implicitly of \cite[Theorem 2.5]{F2}, its version for realizable matroids, i.e. for hyperplane arrangements. The construction of the Combinatorial Aomoto complex described in \ref{subsect:GAC} also works for an Abstract Curve Combinatorics as defined in \cite{BC}. In this context, therefore, an Abstract Curve Combinatorics generalizes the notion of a rank-three matroid.
\end{remark}

\begin{corollary}
\label{cor:double_points}
If $P \in S$ is such that $
\phi_P(\Delta_P)= \{i,j\}$ 
and $\alpha = \sum_k \alpha_k \sigma_k, \; \beta = \sum_k \beta_k \sigma_k$ with $\alpha \curlywedge \beta= 0$, then either $\left| \array{cc} \beta_i & \alpha_i \\
\beta_j & \alpha_j \endarray \right| =0$
or $\mu_P(\delta_i, \cC_j) = \mu_P(\delta_j, \cC_i) = 0 \in \kk$ for all branches $\delta_i, \delta_j \in \Delta_P$, where $\phi_P(\delta_i) = i$ and $\phi_P(\delta_j) = j$.
\end{corollary}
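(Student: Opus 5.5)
By Corollary~\ref{cor:eq_det}, the hypothesis $\alpha\curlywedge\beta=0$ gives the resonance condition \eqref{eq:det} for every branch $\delta\in\Delta_P$. The plan is to write these conditions out at the special point $P$ with $\phi_P(\Delta_P)=\{i,j\}$ and read off the dichotomy.

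First, take a branch $\delta_i\in\Delta_P$ with $\phi_P(\delta_i)=i$. Only the components $\cC_i$ and $\cC_j$ pass through $P$, so $\mu_P(\delta_i,\cC_k)=0$ for $k\notin\{i,j\}$. The $k=i$ term is $\mu_P(\delta_i,\cC_i)$, which may be nonzero as a number, but it contributes the column combination $\mu_P(\delta_i,\cC_i)(\beta_i,\alpha_i)^T$ to the second row. That row is proportional to the first, so it does not change the determinant. Expanding \eqref{eq:det} then gives
$$0=\left|\array{cc}\beta_i&\alpha_i\\ \mu_P(\delta_i,\cC_j)\beta_j&\mu_P(\delta_i,\cC_j)\alpha_j\endarray\right| =\mu_P(\delta_i,\cC_j)\left|\array{cc}\beta_i&\alpha_i\\ \beta_j&\alpha_j\endarray\right|.$$
For clarity, the weighted sum in \eqref{eq:det} could equally be written over $k\neq i$, since the $k=i$ terms cancel in the proof of Theorem~\ref{thm:resonance}: they come from $\sigma_i\wedge\sigma_i=0$.

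Second, the symmetric computation for a branch $\delta_j$ with $\phi_P(\delta_j)=j$ gives $\mu_P(\delta_j,\cC_i)$ times the same determinant, up to sign, equal to $0$ in $\kk$.

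Third, since $\kk$ is a field, either the determinant vanishes, or every $\mu_P(\delta_i,\cC_j)$ and every $\mu_P(\delta_j,\cC_i)$ vanishes in $\kk$, over all branches $\delta_i,\delta_j$ at $P$. This is exactly the statement.

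The only delicate point is justifying that the self-term $k=i$ is harmless. This holds because it contributes a multiple of the first row, or equivalently because the coefficient $\lambda_{\delta,P}$ in \eqref{eq:product2} only involves products $\sigma_i\wedge\sigma_k$ with $k\neq i$. Once that is settled, the rest is a two-line computation.
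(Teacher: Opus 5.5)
Your proof is correct and follows the same route as the paper: specialize the resonance condition \eqref{eq:det} at $P$, factor out $\mu_P(\delta_i,\cC_j)$ from the second row, and use that $\kk$ is a field. Your explicit handling of the self-term $k=i$ and of the symmetric branches $\delta_j$ simply spells out details the paper leaves implicit.
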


\begin{proof}
Assume $\mu_P(\delta_i, \cC_j) \in\kk^*$ for some $\delta_i\in\Delta_P$ for which $\phi_P(\delta_i)=i$. Then, the hypothesis 
$\alpha \curlywedge \beta= 0$ and~\eqref{eq:det} imply the following resonance condition
$$
\left| \array{cc} \beta_i & \alpha_i \\
\mu_P(\delta_i,\cC_j)\beta_j & \mu_P(\delta_i,\cC_j)\alpha_j \endarray \right|= 
\mu_P(\delta_i,\cC_j)
\left| \array{cc} \beta_i & \alpha_i \\
\beta_j & \alpha_j \endarray \right| = 0,
$$
which proves $\left| \array{cc} \beta_i & \alpha_i \\ \beta_j & \alpha_j \endarray \right| = 0$.
\end{proof}

\section{Modular inequalities: upper bounds}
\label{sec:generalcase}
Consider as above the affine complement $X$ of a projective curve $\cC\subset \PP^2$. Consider also a rational character $\rho:\pi_1(X)\to \CC^*$ given by $\sigma_i\mapsto \xi^{m_i}_k$, where $\xi_k$
is a primitive $k$-root of unity and $k$ is a prime power $k=p^s$. Denote $\kk=\ZZ_p$.
Consider $(A^*_{W,\kk},\cdot\wedge\w_{\rho})$ the Combinatorial Aomoto complex associated with $W=W_{\cC}$ with $\kk$ coefficients, where $\omega_\rho:=\sum_i m_i\sigma_i\in A^1_{W,\kk}$. 
As an immediate consequence of Theorem \ref{thm:thmC_PS}, Remark \ref{rem:proj/affine}, and Theorem \ref{thm:resonance}, one has the following result.

\begin{thm}
\label{thm:inequality}
$\dim_\CC H^1(X;\CC_\rho)\leq \dim_\kk H^1(A^*_{W,\kk},\cdot\wedge\w_\rho)$.
\end{thm}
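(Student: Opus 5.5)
The inequality follows by chaining Theorem~\ref{thm:thmC_PS} with the quasi-isomorphism built in the proof of Theorem~\ref{thm:resonance}.

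\textbf{Step 1: the topological bound.} By Proposition~\ref{prop:complementC_tors_free}, $H_1(X;\ZZ)$ is torsion free, since $X$ is the complement of $\cC_0\cup\cC$ with $\cC_0$ a line. Hence the hypotheses of Theorem~\ref{thm:thmC_PS} hold. Take $\omega:\pi_1(X)\to\ZZ$ defined by the integers $m_i$ on the meridians, and take $\xi=\xi_k$ with $k=p^s$. Then $\rho=\rho_{\omega,\xi_k}$, and Theorem~\ref{thm:thmC_PS} gives
\[
\dim_\CC H^1(X;\CC_\rho)\leq \dim_\kk H^1(H^*(X;\kk),\cdot\wedge\omega_\kk).
\]
Here $\omega_\kk=\sum_i m_i\sigma_i\in H^1(X;\kk)$. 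Under the identification $H^1(X;\kk)=A^1_{W,\kk}$ (both have basis $\{\sigma_i\}$), $\omega_\kk$ corresponds to $\omega_\rho$. If one starts from a character of $\tilde X$, Remark~\ref{rem:proj/affine} first transfers the statement to $X$.

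\textbf{Step 2: passing to the combinatorial complex.} I would show that
\[
H^1(H^*(X;\kk),\cdot\wedge\omega_\rho)\cong H^1(A^*_{W,\kk},\cdot\curlywedge\omega_\rho).
\]
This is exactly what the proof of Theorem~\ref{thm:resonance} establishes: the chain of morphisms $(H^*(X;\kk),\wedge\omega)\to(\tilde A^*,\wedge\omega)\leftarrow(A^*_W,\curlywedge\omega)$ consists of 1-quasi-isomorphisms. In degree $0$ both complexes are $\kk$, with the same image $\kk\omega$ in degree $1$. The kernels of multiplication by $\omega$ on degree $1$ were shown to coincide: $K=\tilde K$, and $\tilde K$ equals the kernel in $H^*(X;\kk)$ because $\sigma_i\wedge\sigma_j$ lands in $V_0\oplus V_\infty$. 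Taking quotients, the first cohomologies are equal as subquotients of the same space $A^1_W$, so in particular their dimensions agree. Substituting into Step 1 gives the theorem.

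\textbf{Expected obstacle.} The delicate point is the hypothesis $p\nmid d_i$ used in the proof of Theorem~\ref{thm:resonance} to kill $\lambda_i$. I expect this is handled by the quotient defining $A^{2,\infty}_W$: only the classes $\bar\psi^i_\infty$ with $p\mid d_i$ survive in $A^2_W$. So for indices with $p\nmid d_i$ the coefficient $\lambda_i$ need not vanish in $A_W^2$, and for those with $p\mid d_i$ the term survives on both sides. I would check this bookkeeping carefully for both types of index. If it fails, the fallback is the weaker inclusion $\tilde K\subset K$, which holds without any assumption. That inclusion gives $\dim H^1(\tilde A^*)\leq \dim H^1(A^*_W)$, and that is already enough for the inequality direction needed here.
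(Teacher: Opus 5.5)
Your proposal is correct and is essentially the paper's argument. The paper obtains the statement as an immediate consequence of Theorem~\ref{thm:thmC_PS} (with torsion-freeness from Proposition~\ref{prop:complementC_tors_free} applied with the line $\cC_0$), Remark~\ref{rem:proj/affine}, and the 1-quasi-isomorphism in the proof of Theorem~\ref{thm:resonance}.

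One small correction to your obstacle paragraph: the quotient defining $A^{2,\infty}_W$ does not handle the indices with $p\nmid d_i$, it creates the issue there. What resolves it in the paper is the Bezout summation $\sum_{P,\delta}\lambda_{\delta,P}=d_i\lambda_i$. Your fallback $\tilde K\subseteq K$, however, holds unconditionally and already suffices for this inequality, so the Bezout step is only needed for the equality of resonance varieties.
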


\begin{remark}
\label{rem:zariski_classic_result}
Note that Theorem \ref{thm:inequality} implies that the roots of the Alexander polynomial of an irreducible plane curve $\cC$ cannot be  primitive roots of unity of order $p^s$, $p$ prime number, $s \geq 1$. This recovers a classical result of Zariski from \cite{Z}, restated in modern terms and re-proved in  \cite[Proposition 2.1]{BD}.
\end{remark}

\subsection{A criterion for triviality}
\label{subsec:triv_crit}

\begin{definition}
Consider a projective curve $\cC$ and its weak combinatorics $W_{\cC}$. Assume there is a point $P\in\Sing(\cC)$ such that $\phi_P(\Delta_P)= \{i,j\}$ and $\mu_P(\delta,\cC_j)\in\kk^*$ for some branch with $\phi_P(\delta)=i$. Such a point will be called \emph{a $p$-transversal point of $\cC$}. Two components $\cC_i$ and $\cC_j$ are called \emph{$p$-transversal at a point} if there is a $p$-transversal point $P\in\cC_i\cap\cC_j$ of~$\cC$. 
\end{definition}
As an immediate consequence of Corollary~\ref{cor:double_points},
one has the following basic property of $p$-transversal points with respect to resonance varieties.
\begin{lemma}
\label{lemma:ptransversal}
Let $\cC_i$ and $\cC_j$ two $p$-transversal components at a point in $\cC$.
Consider $\alpha,\beta \in A^1_{W,\kk}$, $\alpha=\sum_k \alpha_k\sigma_k$, and 
$\beta=\sum_k \beta_k\sigma_k$. If $\alpha\curlywedge\beta=0$, then 
$$
\left|
\array{cc}
\alpha_i & \beta_i\\
\alpha_j & \beta_j
\endarray
\right|= 0\in\kk.
$$
\end{lemma}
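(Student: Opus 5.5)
The plan is to reduce the statement directly to Corollary~\ref{cor:double_points}. Since $\cC_i$ and $\cC_j$ are $p$-transversal at a point, the definition gives a point $P\in\Sing(\cC)$ with $\phi_P(\Delta_P)=\{i,j\}$ and a branch $\delta\in\Delta_P$ with $\phi_P(\delta)=i$ and $\mu_P(\delta,\cC_j)\in\kk^*$. These are exactly the hypotheses on $P$ in Corollary~\ref{cor:double_points}.

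Next I apply Corollary~\ref{cor:double_points} to $\alpha,\beta$ with $\alpha\curlywedge\beta=0$. It gives a dichotomy. Either
$$\left|\begin{array}{cc}\beta_i&\alpha_i\\ \beta_j&\alpha_j\end{array}\right|=0,$$
or $\mu_P(\delta_i,\cC_j)=\mu_P(\delta_j,\cC_i)=0\in\kk$ for every branch $\delta_i\in\phi_P^{-1}(i)$ and every $\delta_j\in\phi_P^{-1}(j)$. Taking $\delta_i=\delta$ shows the second alternative fails, because $\mu_P(\delta,\cC_j)\neq0$ in $\kk$. Hence the first alternative holds.

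It remains to relate this determinant to the one in the statement. The two matrices differ by a transposition of columns, and the matrix in the statement is the transpose of the resulting one, so the two determinants agree up to sign. Their vanishing in $\kk$ is therefore equivalent, which proves the lemma.

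I do not expect a real obstacle here. The only care needed is in the direction of the hypothesis: the definition only requires $\mu_P(\delta,\cC_j)\neq0$ for one branch of $\cC_i$, and not the symmetric condition. One should check that this one-sided condition is enough. It is, because the proof of Corollary~\ref{cor:double_points} uses only a single branch with nonzero intersection multiplicity. Alternatively, one can redo that argument directly from the resonance condition~\eqref{eq:det} of Corollary~\ref{cor:eq_det}. Since $\phi_P(\Delta_P)=\{i,j\}$ and $\mu_P(\delta,\cC_i)=0$ (intersections are only recorded between branches of different components), the only term in the sum over $k$ is $k=j$. The condition then becomes $\mu_P(\delta,\cC_j)\,(\beta_i\alpha_j-\alpha_i\beta_j)=0$, and we divide by the unit $\mu_P(\delta,\cC_j)$.
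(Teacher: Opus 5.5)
Your proposal is correct and follows the paper's route: the paper states this lemma as an immediate consequence of Corollary~\ref{cor:double_points}. Your argument supplies exactly that step: the $p$-transversal branch rules out the second alternative, and the two determinants differ only by sign.
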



In the special case of ordinary singularities, the resonance condition becomes simpler to describe. Recall that an ordinary singularity $P\in S$ is given locally as the intersection of smooth branches that intersect pairwise transversally, that is, $\mu_P(\delta_i,\delta_j)=1$ if $i\neq j$.

\begin{proposition}
\label{prop:ordinary_point}
Let $ P\in\cC_{i_1} \cap \ldots \cap \cC_{i_s} \subset {\rm Sing}(\cC)$ be an ordinary singularity such that $\#\Delta_P=s$ and $\w_1,\beta \in A^1_{W,\kk}$, $\w_1=\sum_i \sigma_i$, and 
$\beta=\sum_i \beta_i\sigma_i$. If $\w_1\curlywedge\beta=0$, then either 
\begin{equation}
\label{eq:equality}
\beta_{i_1} = \dots = \beta_{i_s} \,\, \textrm{ if } s\in\kk^*,
\end{equation}
or 
\begin{equation}
\beta_{i_1} + \dots + \beta_{i_s}=0\in\kk \textrm{ if } s=0\in\kk.
\end{equation}
\end{proposition}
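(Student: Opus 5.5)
We apply Corollary~\ref{cor:eq_det} at the point $P$ with $\alpha=\w_1$, so that $\alpha_k=1$ for all $k$. Since $P$ is ordinary with $\#\Delta_P=s$, each component $\cC_{i_t}$ has exactly one smooth branch $\delta_t$ at $P$. For every $k\neq i_t$ we have $\mu_P(\delta_t,\cC_k)=1$ if $k\in\{i_1,\dots,i_s\}$, and $0$ otherwise. We will not need the value for $k=i_t$: it only enters through the combination $\alpha_k\beta_{i_t}-\alpha_{i_t}\beta_k$, which vanishes identically when $k=i_t$. Indeed, the resonance condition~\eqref{eq:det} is just the expansion $\lambda_{\delta,P}=\sum_k\mu_P(\delta,\cC_k)(\alpha_k\beta_i-\alpha_i\beta_k)$, in which the diagonal term cancels. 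So we work with that expanded form.

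Substituting $\alpha_k=1$, the resonance condition for the branch $\delta_t$ becomes
$$
0=\sum_{u\neq t}(\beta_{i_t}-\beta_{i_u}) = s\,\beta_{i_t}-\sum_{u=1}^s\beta_{i_u}, \qquad t=1,\dots,s.
$$
Writing $B=\sum_u\beta_{i_u}$, this says $s\beta_{i_t}=B$ for every $t$. We now split according to the value of $s$ in $\kk$.

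If $s\in\kk^*$, then $\beta_{i_t}=s^{-1}B$ is the same for all $t$, which gives~\eqref{eq:equality}. If $s=0\in\kk$, the equation reads $0=B$, that is, $\beta_{i_1}+\dots+\beta_{i_s}=0$, which is the second alternative.

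The only delicate point is the first step. We must justify that the self-intersection term can be discarded, and that the branches at $P$ are exactly one per component, which follows from $\#\Delta_P=s$. It is therefore cleaner to invoke the resonance condition in the expanded form of $\lambda_{\delta,P}$ coming from the proof of Theorem~\ref{thm:resonance} than in its determinant form. Note also that only the necessity direction of Corollary~\ref{cor:eq_det} is used, so no hypothesis $p\nmid d_i$ is required.
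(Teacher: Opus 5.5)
Your proof is correct and is essentially the paper's argument: the paper applies the resonance condition \eqref{eq:det} at each branch of $P$ with $\alpha=\w_1$, obtaining $s\beta_i=\beta_{i_1}+\dots+\beta_{i_s}$, and leaves the split according to $s\in\kk^*$ or $s=0$ implicit. Your observation that the diagonal term $k=i_t$ cancels makes explicit why the paper can tacitly give that self-intersection entry the value $1$ in its determinant.
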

\begin{proof}
Since $\w_1\curlywedge\beta=0$, the resonance condition~\eqref{eq:det} at $P$ becomes
$\left| \array{cc} \beta_i & 1 \\
\beta_{i_1}+\dots+\beta_{i_s} &  s \endarray \right|=0$ for all $i\in\{i_1,\dots,i_s\}$.
\end{proof}

The purpose of the rest of this section is to describe a reduction method that can decide the triviality of 
$H^1(A^*_{W_\cC,\kk},\cdot\curlywedge\alpha)$ for a given $\alpha=\sum_i \alpha_i\sigma_i\in A^1_{W_\cC,\kk}$. 

One can use Lemma~\ref{lemma:ptransversal} repeatedly when the components of a curve are pairwise $p$-transversal at a point, in order
to calculate the cohomology of $(A^*_{W_\cC,\kk},\cdot\curlywedge\alpha)$.
For technical reasons, we sometimes need $\alpha=\sum_i \alpha_i\s_i$ to be non-coordinate, that is, such that $\alpha_i\neq 0$ for all $i\in\mathbf{r}$. The following object will be useful to describe this method.

\begin{definition}
\label{def:ptransversalgraph}
For a curve $\cC$, one can define the \emph{$p$-transversality graph} as the non-oriented graph $\Gamma^p_{\cC}=(V,E)$ whose set of vertices $V=\mathbf{r}$ and such that $e=\{i,j\}\in E$ is an edge if $\cC_i$ and $\cC_j$ are $p$-transversal at a point.
\end{definition}

\begin{proposition}
\label{prop:pairwiseptransversal}
If $\Gamma^p_{\cC}$ is complete, then 
$H^1(A^*_{W_\cC,\kk},\cdot\curlywedge\alpha)=0$ for any non-zero $\alpha\in A^1_{W_\cC,\kk}$. 

Moreover, if $\Gamma^p_{\cC}$ is connected, then 
$H^1(A^*_{W_\cC,\kk},\cdot\curlywedge\alpha)=0$ for any non-coordinate $\alpha\in A^1_{W_\cC,\kk}$.
\end{proposition}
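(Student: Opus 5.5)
The plan is to identify $H^1(A^*_{W_\cC,\kk},\cdot\curlywedge\alpha)$ with the quotient $K/\kk\alpha$, where $K=\ker(A^1_W\xrightarrow{\curlywedge\alpha}A^2_W)$. Since $A^0_W=\kk$ maps to $\kk\alpha$, and $\alpha\neq 0$, the image of the first map is one-dimensional. So in both parts it suffices to show that every $\beta\in A^1_W$ with $\beta\curlywedge\alpha=0$ is a scalar multiple of $\alpha$. Write $\alpha=\sum_k\alpha_k\sigma_k$ and $\beta=\sum_k\beta_k\sigma_k$. The key input is Lemma~\ref{lemma:ptransversal}: for every edge $\{i,j\}$ of $\Gamma^p_\cC$ one has $\alpha_i\beta_j-\alpha_j\beta_i=0$ in $\kk$. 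Thus the vectors $(\alpha_i,\alpha_j)$ and $(\beta_i,\beta_j)$ in $\kk^2$ are linearly dependent along every edge.

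\emph{Complete case.} Here all $2\times 2$ minors of the $2\times r$ matrix with rows $(\alpha_k)_k$ and $(\beta_k)_k$ vanish. Hence the matrix has rank at most one. Since its first row $\alpha$ is nonzero, the second row is a multiple of it, so $\beta=\lambda\alpha$ for some $\lambda\in\kk$. Therefore $K=\kk\alpha$ and $H^1=0$.

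\emph{Connected case.} Now $\alpha_k\neq 0$ for all $k$. For an edge $\{i,j\}$ the vanishing minor gives $\beta_i/\alpha_i=\beta_j/\alpha_j$ in $\kk$, since $\kk$ is a field and the $\alpha$'s are units. Set $\lambda_k:=\beta_k/\alpha_k$. The relation says $\lambda$ is constant along edges of $\Gamma^p_\cC$. Because the graph is connected and its vertex set is all of $\mathbf r$, $\lambda$ is constant on $\mathbf r$, say equal to $\lambda$. Then $\beta=\lambda\alpha$ and again $H^1=0$.

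The only delicate point is the role of the non-coordinate hypothesis in the connected case. If some $\alpha_i=0$, the edge relation $\alpha_i\beta_j=\alpha_j\beta_i$ only forces $\beta_i=0$ when $\alpha_j\neq 0$, and it says nothing across an edge where both coordinates of $\alpha$ vanish. So ratios cannot be propagated along a path; this is exactly why the statement needs the hypothesis. I would also check that Lemma~\ref{lemma:ptransversal} applies with the roles of $\alpha$ and $\beta$ as stated. Antisymmetry of $\curlywedge$ on degree-one elements means $\beta\curlywedge\alpha=0$ is the same as $\alpha\curlywedge\beta=0$, and the determinant condition is symmetric up to sign. Beyond that, everything is a direct consequence of that lemma.
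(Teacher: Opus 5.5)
Your proposal is correct and follows the paper's own argument. Lemma~\ref{lemma:ptransversal} makes the $2\times 2$ minor vanish along every edge of $\Gamma^p_\cC$, which forces rank one (hence $\beta=\lambda\alpha$) in the complete case; in the connected case the paper propagates this through paths by transitivity of vanishing minors at vertices with $\alpha_{i_n}\neq 0$, which is exactly your constancy of the ratios $\beta_k/\alpha_k$ along edges.
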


\begin{proof}
Assume $\beta=\sum_k \beta_k\sigma_k\in A^1_{W_\cC,\kk}$ such that $\beta \curlywedge\alpha=0$. Then, by Lemma~\ref{lemma:ptransversal}
one has $\rk (\beta,\alpha)=1$ and hence $\beta=\lambda\alpha$ for some $\lambda\in\kk$. The \emph{moreover} part is also immediate since
in a field 
$
\left|
\array{cc}
\alpha_{i_{n-1}} & \beta_{i_{n-1}}\\
\alpha_{i_{n}} & \beta_{i_{n}}
\endarray
\right|= 
\left|
\array{cc}
\alpha_{i_{n}} & \beta_{i_{n}}\\
\alpha_{i_{n+1}} & \beta_{i_{n+1}}
\endarray
\right|=0
$
implies
$
\left|
\array{cc}
\alpha_{i_{n-1}} & \beta_{i_{n-1}}\\
\alpha_{i_{n+1}} & \beta_{i_{n+1}}
\endarray
\right|=0.
$
\end{proof}

We will describe a recursive process to make this concept more general.
We will say the curve $\cC$ is $p$-transversal at $P\in \Sing(\cC)$ w.r.t. $\alpha$ if $\phi_P(\Delta_P)=\{i,j\}$ and 
$\mu_P(\delta,\cC_j)\alpha_j\in\kk^*$ for some $\delta\in\Delta_P$ such that $\phi_P(\delta)=i$. 
One can start a recursive process as follows. Consider the quotient $\mathbf{r}'=\mathbf{r}/\sim$ by identifying the previous components, say $[i]=[j]$ and denote by $\pi:\mathbf{r}\to \mathbf{r}'$ the projection map. One can say $\cC$ is $p$-transversal at another point $Q\in\Sing(\cC)$ w.r.t. $(\alpha,\pi)$ if $\pi(\phi_P(\Delta_P))=\{i',j'\}\subset\mathbf{r}'$ and $\sum_{\pi(j)=j'}\mu_Q(\delta,\cC_{j})\alpha_j\in\kk^*$ 
for some $\delta\in\Delta_P$ such that $\pi(\phi_P(\delta))=i'$. This process can be extended to any projection map.
The curve $\cC$ is called \emph{completely $p$-reductive w.r.t. $\alpha$} if one can apply this recursive method until the quotient 
set $\mathbf{r}'$ eventually contains only one element.

\begin{example}
	\label{exam:TransversalCurves}
	Consider for instance that $\cC = \cC_1 \cup  \cC_2$ is the union of two curves  $\cC_1,  \cC_2$ that intersect transversely. 
	Then $\cC$ is completely $p$-reductive for any $p$ w.r.t. any non-coordinate~$\alpha\in A^1_{W_\cC,\kk}$. More generally, if the graph $\Gamma^p_\cC$ is connected, then $\cC$ is completely $p$-reductive w.r.t. any non-coordinate~$\alpha\in A^1_{W_\cC,\kk}$.
\end{example}

\begin{proposition}
	\label{prop:completelypreductive}
	Let $\cC$ be a completely $p$-reductive curve w.r.t. a non-coordinate $\alpha\in A^1_{W_\cC,\kk}$. 
	Then the cohomology jumping loci of its Combinatorial Aomoto complex $(A^*_{W_\cC,\kk},\cdot\curlywedge\alpha)$ is trivial, i.e.,
	$$
	H^1(A^*_{W_\cC,\kk},\cdot\curlywedge\alpha)=0.
	$$
\end{proposition}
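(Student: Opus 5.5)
We have to show that any $\beta\in A^1_{W_\cC,\kk}$ with $\beta\curlywedge\alpha=0$ is a scalar multiple of $\alpha$. Note that $H^0$ plays no role here: since $\alpha\neq 0$, the image of $A^0\to A^1$ is the line $\kk\alpha$, so $H^1=0$ means exactly that $\ker(\cdot\curlywedge\alpha)=\kk\alpha$. The plan is an induction along the reduction sequence $\mathbf r=\mathbf r_0\to\mathbf r_1\to\dots\to\mathbf r_N=\{\ast\}$ from the definition of complete $p$-reductivity. At stage $n$ the inductive hypothesis is that there is a scalar $\lambda_c\in\kk$ for every class $c\in\mathbf r_n$ with $\beta_i=\lambda_c\alpha_i$ for all $i\in c$. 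This holds trivially at $n=0$, with $\lambda_{\{i\}}=\beta_i/\alpha_i$, which makes sense because $\alpha$ is non-coordinate. At the last stage there is one class, so $\beta=\lambda\alpha$.

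For the inductive step, let $Q$ be the point used to merge $i'$ and $j'$. Then $\pi(\phi_Q(\Delta_Q))=\{i',j'\}$, and there is a branch $\delta$ with $\pi(\phi_Q(\delta))=i'$, say $\phi_Q(\delta)=i$, such that $\sum_{\pi(j)=j'}\mu_Q(\delta,\cC_j)\alpha_j\in\kk^*$. I will apply Corollary~\ref{cor:eq_det} to the pair $(\beta,\alpha)$, which gives the resonance condition~\eqref{eq:det} at $(Q,\delta)$:
$$\beta_i\sum_k\mu_Q(\delta,\cC_k)\alpha_k-\alpha_i\sum_k\mu_Q(\delta,\cC_k)\beta_k=0.$$
Every component through $Q$ lies in $i'\cup j'$, so only those $k$ contribute. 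Substituting $\beta_k=\lambda_{i'}\alpha_k$ for $k\in i'$ and $\beta_k=\lambda_{j'}\alpha_k$ for $k\in j'$, and using $\beta_i=\lambda_{i'}\alpha_i$, the terms with $k\in i'$ cancel. What remains is
$$\alpha_i(\lambda_{i'}-\lambda_{j'})\sum_{k\in j'}\mu_Q(\delta,\cC_k)\alpha_k=0.$$
Here $\alpha_i\neq 0$ and the last factor lies in $\kk^*$ by assumption, so $\lambda_{i'}=\lambda_{j'}$. Hence the merged class again carries a single scalar, which completes the induction.

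The main care point is making sure I use only the \emph{necessary} direction of Corollary~\ref{cor:eq_det}, which holds without any hypothesis on $p\nmid d_i$. I also need to check that the recursive definition does require all branches at $Q$ to lie in $i'\cup j'$. The phrase ``$\pi(\phi_Q(\Delta_Q))=\{i',j'\}$'' ensures exactly this, so no stray components enter the sum. The base case, with the identity projection, is just Lemma~\ref{lemma:ptransversal} in the same computation.
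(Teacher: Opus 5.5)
Your proof is correct and follows the paper's argument: you induct along the $p$-reduction sequence, assuming $\beta$ is proportional to $\alpha$ on each class, and the resonance condition~\eqref{eq:det} at the merging point forces $\lambda_{i'}=\lambda_{j'}$. The differences are cosmetic. Your base case uses singleton classes, justified by $\alpha$ being non-coordinate, where the paper's first step invokes Corollary~\ref{cor:double_points}. You also state explicitly that $H^1=0$ amounts to $\ker(\cdot\curlywedge\alpha)=\kk\alpha$, which the paper leaves implicit.
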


\begin{proof}
Consider $\beta=\sum_i \beta_i\sigma_i\in A^1_{W_\cC,\kk}$ such that $\beta\curlywedge\alpha=0$.
Let's say the curve $\cC$ is $p$-transversal at $P\in \Sing(\cC)$ w.r.t. $\alpha$.
By Corollary~\ref{cor:double_points}, this implies the existence of $\lambda\in\kk$ such that $(\beta_i,\beta_j)=\lambda(\alpha_i,\alpha_j)$.
Assume the following inductive hypothesis: for any element $i'\in\mathbf{r}'$ there exists $\lambda\in\kk$ such that 
$\beta_i=\lambda\alpha_i$ for all~$\pi(i)=i'$. One can prove the following. Assume $\cC$ satisfies the inductive hypothesis
w.r.t. $(\alpha,\pi)$ and $\beta\curlywedge\alpha=0$. If $\cC$ is $p$-transversal at $P\in\Sing(\cC)$ w.r.t. $(\alpha,\pi)$ and denote by $\pi'$
the resulting projection map, then $\cC$ satisfies the inductive hypothesis w.r.t. $(\alpha,\pi')$. In order to prove this claim, note that by hypothesis
$\pi(\phi_P(\Delta_P))=\{i',j'\}\subset\mathbf{r}'$ and there exists $\delta\in\Delta_P$ with $\phi_P(\delta)=i_0$, $\pi(i_0)=i'$ such that $\sum_{\pi(j)=j'}\mu_Q(\delta,\cC_{j})\alpha_j\in\kk^*$. By condition~\eqref{eq:det} one has
\begin{equation}
	\label{eq:preduction}
\array{ll}
	0 & = \left| \array{cc} \beta_{i_0} & \alpha_{i_0} \\
	\sum_k \mu_P(\delta,\cC_k)\beta_k  & \sum_k \mu_P(\delta,\cC_k)\alpha_k \endarray \right|\\
	\\
    & =
	\left| \array{cc} \beta_{i_0} & \alpha_{i_0} \\
\sum\limits_{\pi(i)=i'} \mu_P(\delta,\cC_i)\beta_i+\sum\limits_{\pi(j)=j'} \mu_P(\delta,\cC_j)\beta_j & 
\sum\limits_{\pi(i)=i'} \mu_P(\delta,\cC_i)\alpha_i+\sum\limits_{\pi(j)=j'} \mu_P(\delta,\cC_j)\alpha_j \endarray \right|.
\endarray
\end{equation}
By induction hypothesis $\beta_i=\lambda_1\alpha_i$ (resp. $\beta_j=\lambda_2\alpha_j$) for any $i\in\mathbf r$ such that $\pi(i)=i'$
(resp. for any $j\in\mathbf r$ such that $\pi(j)=j'$). Hence
\begin{equation}
	\label{eq:preduction2}
	\array{ll}
	0 & = 
	\left| \array{cc} \lambda_1\alpha_{i_0} & \alpha_{i_0} \\
	\lambda_1\sum\limits_{\pi(i)=i'} \mu_P(\delta,\cC_i)\alpha_i+\lambda_2\sum\limits_{\pi(j)=j'} \mu_P(\delta,\cC_j)\alpha_j & 
	\sum\limits_{\pi(i)=i'} \mu_P(\delta,\cC_i)\alpha_i+\sum\limits_{\pi(j)=j'} \mu_P(\delta,\cC_j)\alpha_j \endarray \right| \\
	\\
	& =
	\alpha_{i_0}\sum\limits_{\pi(i)=i'} \mu_P(\delta,\cC_i)\alpha_i \left| \array{cc} \lambda_1 & 1 \\
	\lambda_1& 1 \endarray \right| +
	\alpha_{i_0}\sum\limits_{\pi(j)=j'} \mu_P(\delta,\cC_j)\alpha_j \left| \array{cc} \lambda_1 & 1 \\
	\lambda_2 & 1\endarray \right|.
	\endarray
\end{equation}
Since $\alpha_{i_0}\neq 0$ and  $\sum_{\pi(j)=j'}\mu_Q(\delta,\cC_{j})\alpha_j\in\kk^*$ by hypothesis, this implies $\lambda_1=\lambda_2$
and hence $\cC$ satisfies the inductive hypothesis, that is, $\beta_k=\lambda_1\alpha_k$ for all $\pi(k)=i'$ or $\pi(k)=j'$.
\end{proof}

\begin{remark}
Note that the technical condition in Proposition~\ref{prop:completelypreductive} about a non-coordinate $\alpha$ can be avoided in practice by considering the subcurve of $\cC$ whose components correspond with the non-zero coefficients of $\alpha$. For instance, if $\alpha_i=0$ for some $i\in\mathbf{r}$ but $\sum_{k=1}^r d_k\alpha_k\neq 0$ one can use 
the resonance condition~\eqref{eq:lambda_i} to show that $\beta\curlywedge\alpha=0$ implies $\beta_i=0$. Hence, one can consider $\tilde \cC=\cC_1\cup\cdots\cup\hat\cC_i\cup\cdots\cup\cC_r$ the subcurve of $\cC$ resulting from removing the component $\cC_i$ from $\cC$. Denote by $\hat\alpha$ the 1-cochain in $A^*_{W_{\hat\cC},\kk}$. 
It is immediate that $H^1(A^*_{W_\cC,\kk},\cdot\curlywedge\alpha)=H^1(A^*_{W_{\hat\cC},\kk},\cdot\curlywedge\hat\alpha)$. Using this reduction method one ends up with a non-coordinate cochain for a subcurve of~$\cC$ and 
Proposition~\ref{prop:completelypreductive} could be applied.

This reduction process can also be used if for instance if $\sum_k \mu_P(\delta,\cC_k)\alpha_k\neq 0$ for some $P\in\Sing(\cC)$, 
and some $\delta\in\Delta_P$ with~$\phi_P(\delta)=i$.
\end{remark}

\begin{proposition}
Let $\cC$ be a curve that is the union of $r \geq 3$ fibers of a pencil of curves. Then $\cC$ is not completely $p$-reductive for any $p$.
\end{proposition}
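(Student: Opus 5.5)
The plan is to show that the recursive reduction can never merge components lying in different fibers. Since $r\geq 3$ fibers are involved, the quotient set $\mathbf{r}'$ then always keeps at least $r\geq 3$ elements, and so it never collapses to a single element. The argument uses no information about $p$ or about $\alpha$: it depends only on the shape of the sets $\phi_P(\Delta_P)$.

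\textbf{Setup.} Write $\cC=\cF_1\cup\dots\cup\cF_r$, where the $\cF_\ell=\{\lambda_\ell F+\mu_\ell G=0\}$ are distinct members of a pencil. Each $\cF_\ell$ may be reducible, so the index set $\mathbf{r}$ of irreducible components of $\cC$ splits as a disjoint union $\mathbf{r}=\mathbf{r}_1\sqcup\dots\sqcup\mathbf{r}_r$ according to the fiber containing each component. (If a component occurred in two fibers it would divide both $F$ and $G$; we assume, as usual, that the pencil has no fixed component.)

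\textbf{Geometric input.} Two distinct members of the pencil meet exactly in the base locus $B=\{F=G=0\}$. Every point of $B$ lies on every member of the pencil. Consequently, at a singular point $P\in\Sing(\cC)$ one of two things happens:
\begin{itemize}
\item[(a)] $P\notin B$. Then all branches at $P$ belong to one fiber, so $\phi_P(\Delta_P)\subset\mathbf{r}_\ell$ for a single $\ell$.
\item[(b)] $P\in B$. Then $P$ lies on each $\cF_\ell$, so $\phi_P(\Delta_P)$ meets every $\mathbf{r}_\ell$, for $\ell=1,\dots,r$.
\end{itemize}

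\textbf{Invariant.} I will prove by induction along the recursive process that every class of the current projection $\pi:\mathbf{r}\to\mathbf{r}'$ is contained in a single $\mathbf{r}_\ell$.

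Initially all classes are singletons, so the claim holds.

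For the inductive step, suppose the claim holds for $\pi$ and that the process merges two classes $i',j'$ at a point $Q$. This requires $\pi(\phi_Q(\Delta_Q))=\{i',j'\}$. If $Q\in B$, then by (b) the set $\phi_Q(\Delta_Q)$ meets all $r$ blocks $\mathbf{r}_\ell$. By the inductive hypothesis each class lies inside one block, so $\pi(\phi_Q(\Delta_Q))$ has at least $r\geq3$ elements, a contradiction. Hence $Q\notin B$. By (a), both $i'$ and $j'$ are then contained in the same block $\mathbf{r}_\ell$, and the merged class stays inside $\mathbf{r}_\ell$.

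\textbf{Conclusion.} By the invariant, $\#\mathbf{r}'\geq r\geq 3$ at every stage of the process, so $\mathbf{r}'$ never becomes a single element. Hence $\cC$ is not completely $p$-reductive, for every $p$ and every $\alpha$.

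The only delicate point is the geometric dichotomy (a)/(b), namely that distinct fibers meet only at base points and that every base point lies on all fibers. This is standard for a pencil without fixed components. The combinatorial induction itself is routine.
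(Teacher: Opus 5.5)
Your proof is correct and follows the same route as the paper: base points lie on every fiber and so can never be $p$-transversal, while non-base singular points involve a single fiber, so merges stay inside fibers and at least $r$ classes survive. Your invariant (each class of $\pi$ lies in one block $\mathbf{r}_\ell$) makes explicit what the paper's short argument leaves implicit. In particular, it shows that $\#\pi(\phi_Q(\Delta_Q))\geq r\geq 3$ at base points at every stage of the recursion, not just $\#\phi_Q(\Delta_Q)\geq 3$ at the first step.
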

\begin{proof}
Let  $\cC= \overline \cC_1 \cup  \dots \cup \overline \cC_r$ be the decomposition of $\cC$ as union of members of a pencil. Notice that the base points of the pencil are never $p$-transversal since, for any base point $P$, $|\phi_P(\Delta_P)| \geq 3$. In particular, we cannot apply a $p$-reduction to two components of a combinatorics $\tilde W$ if one of the components intersects $\overline \cC_i$ and the other component intersects $\overline \cC_j$ with $i \neq j$. So, after applying all the possible $p$-reductions, the combinatorics that we obtain has at least $r$ components. 
\end{proof}

\subsection{Twisted Milnor fibers and twisted Alexander polynomials}
\label{subsec:twisted}
Consider $\cC= \cC_1 \cup  \ldots \cup \cC_r$ the decomposition of $\cC$ in irreducible components, where $\cC_i$ is defined by the set of zeroes of an irreducible homogeneous polynomial $F_i$. Also consider a multiplicity list $\bar m=(m_1,\ldots,m_r)$ such that $\gcd \bar m=1$. This makes $\{F_1^{m_1}\cdots F_r^{m_r}=1\}\subset \CC^3$ a connected space called the \emph{twisted Milnor fiber} associated with the projective curve $\cC$, and it will be denoted as $\cF(\cC,\bar m)$. We will denote by $\Delta_{(\cC,\bar m)}$ the characteristic polynomial of the twisted Milnor fiber $\cF(\cC,\bar m)$, also called the {\it $\bar m$-twisted Alexander polynomial} associated with~$\cC$. 
Note that 
\begin{equation}
\label{eq:twisted_alexander_poly}
\Delta_{(\mathcal{C}, \bar m)}(t) = \prod_{k | \bar N} \phi_k ^{b_k}
\end{equation}
is a product of cyclotomic polynomials $\phi_k(t)$.
The exponents $b_k$ can also be computed as
\begin{equation}
\label{eq:local_betti_twist}
b_k = {\rm dim}_{\mathbb{C}}H^1(\tilde  X;\mathbb{C}_{\rho^{\bar N/k}}),
\end{equation}
where $\tilde X = \PP^2 \setminus \cC$,  $\bar N = d_1 m_1 + \dots +d_r m_r$, $d_i =\deg F_i$, and $\rho:\pi_1(\tilde{X})\to \CC^*$ is defined by $\sigma_i\mapsto \xi^{m_i}_{\bar N}$, for a primitive $\bar N$-root of unity $\xi_{\bar N}$ (see \cite{DS, Hir, Lib, Sak}). The case $\bar m = (1,  \ldots, 1)$ corresponds to the classical Alexander polynomial $\Delta_\cC$ described in \S\ref{ss:milnor&local_coeff}.

In this context, one can say a curve $\cC$ is completely $p$-reductive w.r.t. $\bar m=(m_1,\dots,m_r)$ if $\cC$ is completely $p$-reductive w.r.t. 
$\w_\rho := \sum_i m_i \sigma_i$.

It follows from Remark \ref{rem:proj/affine} that Theorem \ref{thm:inequality} gives an upper bound for the multiplicities $b_k$ from \eqref{eq:twisted_alexander_poly} in terms of the Combinatorial Aomoto complex.

\begin{thm}
\label{thm:completelypreductive}
Assume that $\cC$ is a completely $p$-reductive curve w.r.t. $\bar m=(m_1, \ldots ,m_r)$ where $\gcd \bar m=1$.
Then the primitive $k$-root of unity $\xi_k$ with $k=p^s$ is not a root of the characteristic polynomial $\Delta_{(\cC,\bar m)}$ of the twisted Milnor fiber $\cF(\cC,\bar m)$ associated with $\cC$ and $\bar m$.
\end{thm}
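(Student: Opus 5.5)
The plan is to chain the upper bound of Theorem~\ref{thm:inequality} with the vanishing result of Proposition~\ref{prop:completelypreductive}. By~\eqref{eq:local_betti_twist}, the multiplicity $b_k$ of $\xi_k$ as a root of $\Delta_{(\cC,\bar m)}$ equals $\dim_\CC H^1(\tilde X;\CC_{\rho^{\bar N/k}})$. Here $\rho^{\bar N/k}$ sends the meridian $\sigma_i$ to $\xi_{\bar N}^{m_i\bar N/k}=\xi_k^{m_i}$ for a suitable primitive $k$-th root of unity $\xi_k$. This is therefore exactly a rational character of the form $\rho_{\omega,\xi_k}$ with $\omega=\sum_i m_i\sigma_i$, where $\xi_k$ is a primitive root of order $k=p^s$. (If $k\nmid\bar N$ there is nothing to prove, since $\phi_k$ does not appear in~\eqref{eq:twisted_alexander_poly}.)

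The argument then proceeds in three steps.

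First, pull the character back to the affine complement $X$ via the surjection $\pi_1(X)\surj\pi_1(\tilde X)$. Remark~\ref{rem:proj/affine} gives $b_k\le \dim_\CC H^1(X;\CC_{i_*\rho^{\bar N/k}})$, and $i_*\rho^{\bar N/k}$ still sends each $\sigma_i$ to $\xi_k^{m_i}$.

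Second, apply Theorem~\ref{thm:inequality}, which rests on Theorem~\ref{thm:thmC_PS} together with the identification of Theorem~\ref{thm:resonance}. Here $\kk=\ZZ_p$, and the relevant class is $\w_\rho=\sum_i m_i\sigma_i\in A^1_{W,\kk}$ with the $m_i$ reduced mod $p$. This yields
$$b_k\le \dim_\kk H^1(A^*_{W_\cC,\kk},\cdot\curlywedge\w_\rho).$$

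Third, by hypothesis $\cC$ is completely $p$-reductive w.r.t. $\bar m$, i.e.\ w.r.t. $\w_\rho$. Proposition~\ref{prop:completelypreductive} then gives $H^1(A^*_{W_\cC,\kk},\cdot\curlywedge\w_\rho)=0$, so $b_k=0$.

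The main obstacle is the non-coordinate hypothesis in Proposition~\ref{prop:completelypreductive}. The reduced class $\w_\rho$ may have some $m_i\equiv 0 \pmod p$, and $\gcd\bar m=1$ does not prevent this. I would handle it in one of two ways.

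The first option is to read the definition of complete $p$-reductivity as implicitly requiring non-coordinate $\alpha$, as in the statement of Proposition~\ref{prop:completelypreductive}. In that case the argument above goes through directly.

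The second option is to use the reduction described in the Remark following that proposition: discard the components with $m_i\equiv 0$ and pass to the subcurve $\hat\cC$ with the class $\hat\alpha$. This is justified when $\sum_k d_k m_k\not\equiv0$ or when a local sum $\sum_k\mu_P(\delta,\cC_k)m_k\not\equiv 0$ forces $\beta_i=0$. After this reduction one again has $H^1=0$.

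One should also check that the recursive reductions in the definition only use the quantities $\mu_P(\delta,\cC_j)\alpha_j$. Each such step is valid by Corollary~\ref{cor:double_points} and the resonance conditions~\eqref{eq:det}, and these conditions hold whenever $\beta\curlywedge\w_\rho=0$ with no divisibility assumption on the $d_i$. That is exactly what the proof of Proposition~\ref{prop:completelypreductive} uses.
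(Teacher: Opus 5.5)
Your proposal is correct and is essentially the paper's proof: the vanishing $H^1(A^*_{W_\cC,\kk},\cdot\curlywedge\w_\rho)=0$ from Proposition~\ref{prop:completelypreductive} is fed into the upper bound of Theorem~\ref{thm:inequality} (via \eqref{eq:local_betti_twist} and Remark~\ref{rem:proj/affine}), which forces $b_k=0$. The paper, like your first option, simply takes $\w_\rho$ to be non-coordinate; your second option is unnecessary and would not work in general, since discarding the components with $m_i\equiv 0 \pmod p$ need not leave a subcurve that is completely $p$-reductive w.r.t.\ $\hat\alpha$.
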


\begin{proof}
Since $\cC$ is completely $p$-reductive w.r.t. the non-coordinate cochain $\w_\rho$, by Proposition~\ref{prop:completelypreductive} one has 
$\dim_\kk H^1(A^*_{W_{\cC},\kk},\cdot \curlywedge \w_\rho)=0$, where  
 $\rho:\pi_1(\tilde X)\to \CC^*$ is the local system given by $\sigma_i\mapsto \xi^{m_i}_k$, $\w_\rho = \sum_i m_i \sigma_i$.
Theorem~\ref{thm:inequality} ends the proof.
\end{proof}

\begin{remark}
In~\cite[Theorem 34]{Oka}, Oka proves that any curve $\cC=\cC_1\cup\cC_2$ that is a transversal union of two curves $\cC_1$ and $\cC_2$ has a trivial Alexander polynomial. Note that 
Proposition~\ref{prop:pairwiseptransversal} and Theorem~\ref{thm:completelypreductive} give this result only for roots of unity whose order is a prime power. On the other hand, it extends this result to certain rank-one twisted Alexander polynomials (known as $\theta$-polynomials or Oka polynomials).
\end{remark}

\subsection{Examples}
\label{subsec:milnor}
We can use the combinatorial objects  defined above to estimate the non-triviality of certain factors of the Alexander polynomial $\Delta_{\cC}(t)$ of the Milnor fiber associated to a given projective curve $\cC$. More precisely, to find upper bounds for the multiplicities of the roots of $\Delta_{\cC}(t)$ that are primitive roots of unity of order a prime power, via Theorem \ref{thm:inequality}.

We will illustrate this with a series of examples that come from Halphen pencils of Hesse-type and index $2$, as defined in \cite{DLPU}, see Remark \ref{rem:halphen_pencil} for a refresher on the definition of Halphen pencils. Here we refer to \cite{DPS} for algorithmic computations of the  Alexander polynomial of the Milnor fiber corresponding to these examples.

\begin{example}
\label{ex:thm6.2_DPS}
Consider the conic-line arrangements $\mathcal{B} \subset \mathcal{A}$ defined by $$\mathcal{B} = \cC_4 \cup \dots \cup \cC_{12}, \; \mathcal{A} = \mathcal{B} \cup L_1 \cup L_2 \cup L_3,$$

where 
$$
\array{ccc}
L_1: {x=0}, & L_2: {y=0}, & L_3: {z=0},
\endarray
$$
$$
\array{ll}
\cC_4 : x^2+xy+y^2+xz+yz+z^2 =0,  \\
\cC_5 : x^2+\bar\xi xy+\xi y^2+\xi xz+yz+\bar\xi z^2=0,
&
\cC_7:  x^2+\bar\xi xy+\xi y^2+\bar\xi xz+\xi yz+\xi z^2 = 0, \; 
\\
\cC_9: x^2+xy+y^2+\xi xz+\xi yz+\bar\xi z^2 =0, 
&
\cC_{11}: x^2+\xi xy+\bar\xi y^2+xz+\xi yz+z^2 =0, \;
\\
\cC_6=\bar\cC_5,\quad 
\cC_8=\bar\cC_7,&
\cC_{10}=\bar\cC_9,\quad 
\cC_{12}=\bar\cC_{11},
\endarray
$$
where $\xi^2+\xi+1=0$, and the notation $\bar\cC$ refers to a curve whose points are complex conjugated to those of $\cC$.

Since $\deg(f_\mathcal{A})=21$ and it has 12 irreducible components, the general form of the Alexander polynomial of the Milnor fiber associated to $\mathcal{A}$ is 
$$
 \Delta_{\mathcal{A}}(t) = (t-1)^{11}\phi_3^{b_3}\phi_7^{b_7}\phi_{21}^{b_{21}}.
 $$
The singular locus $\Sing(\mathcal{A})$ consists of $12$ ordinary double points and $9$ ordinary points of multiplicity $7$.
 Let us describe in detail the aspects of the combinatorics of $\mathcal{A}$ relevant for our computations. To begin with, $\mathcal{A}$ is the reduced union of four reducible members of a pencil of sextics. This pencil has a (unique) member which is a cuspidal sextic with $9$ cusps, which is dual to a smooth cubic.  
 If $f_i$ denotes the equation of $\cC_i$, note that $$f_4f_5f_6+\bar\xi f_7f_9f_{11}+\xi f_8f_{10}f_{12}=0.$$ Hence, $\cC_4 \cup \cC_5 \cup \cC_6$, $\cC_7 \cup \cC_9 \cup \cC_{11}$, and $\cC_8 \cup \cC_{10} \cup\cC_{12}$ are three such members. Also, $$(\xi-1)f_4f_5f_6-(\xi-1)f_7f_9f_{11} +9x^2y^2z^2=0,$$ which implies that $L_1\cup L_2\cup L_3$ is a non-reduced member of this pencil with multiplicity $2$. 
 The generic sextic of this pencil has 9 nodes at the base points of the pencil, corresponding to the 9 ordinary points of multiplicity 7 of $\cA$. Outside of these singular points, the 12 remaining ordinary double points of $\cA$ are distributed in 3 nodes for each of the four special sextics described above. Every pair of irreducible components for each special sextic contains an ordinary double point. This shows that each one of the four special fibers is completely $p$-reductive for any $p$ w.r. t. any non-coordinate $\w\in A^1_{W_\cC,\kk}$. 
 Through the points of multiplicity $7$, i.e., the base points of the pencil, there is an intersection of two conics from each of the first three special fibers and one line from the fourth special fiber.
 
 Since $\cA$ only has only ordinary singularities, we can use Proposition \ref{prop:ordinary_point} to compute upper bounds for the exponents $b_3, b_7$, via Theorem \ref{thm:inequality}. In fact, since this curve has only ordinary multiple points of multiplicities $2$ and $7$, it is straightforward from Proposition \ref{prop:ordinary_point} that $\dim_{\mathbb{Z}_p} H^1(A^*_{W_{\mathcal{A}},\mathbb{Z}_p},\cdot\wedge\w_1)=0$, for $p\neq 2,7$, where $\w_1 = \sum_i \sigma_i$, 
 and thus $b_3=0$. Again, using Proposition \ref{prop:ordinary_point} we compute $\dim_{\mathbb{Z}_7} H^1(A^*_{W_{\mathcal{A}},\mathbb{Z}_7},\cdot\wedge\w_1)=2$, so necessarily $b_7 \leq 2$.
 
Let us consider now the subarrangement $\mathcal{B}$ of $\cA$ defined above after deleting the lines from $\cA$. 
 $\mathcal B$ has degree 18 and 9 irreducible components, hence
 $$
 \Delta_{\mathcal{B}}(t) = (t-1)^{8}\phi_2^{b_2}\phi_3^{b_3}\phi_{6}^{b_{6}}\phi_9^{b_9}\phi_{18}^{b_{18}}.
 $$
Note that $\mathcal B$ has $9$ nodes and $9$ ordinary points of multiplicity $6$ in the singular locus $\Sing(\mathcal{B})$. 
Taking into account the weak combinatorics of this arrangement, Proposition \ref{prop:ordinary_point} immediately implies 
$$\dim_{\mathbb{Z}_2} H^1(A^*_{W_{\mathcal{B}},\mathbb{Z}_2},\cdot\wedge\w_1)=2,$$ and
$$\dim_{\mathbb{Z}_3} H^1(A^*_{W_{\mathcal{B}},\mathbb{Z}_3},\cdot\wedge\w_1)=1,$$
so necessarily $b_2 \leq 2$ and $b_3, b_9 \leq 1.$
 One can compare our computations with \cite[Theorem 6.2]{DPS}, where the Alexander polynomials of the Milnor fiber of $\mathcal{A}, \mathcal{B}$ are computed via \verb}SINGULAR}.
\end{example}
\begin{example}
\label{ex:DPS_thm6.4}
Let $\mathcal{B} \subset \mathcal{A}$ be the conic-line arrangements: 
$$\mathcal{B} = \cC_4 \cup ... \cup \cC_{12}, \; \mathcal{A} = \mathcal{B} \cup L_1 \cup L_2 \cup L_3,$$ where 
$$
\array{lll}
L_1: x+y+z=0, & 
L_2: x+\xi y+\bar\xi z = 0, & 
L_3: x+\bar\xi y+\xi z  = 0, \\ 
\cC_4: y^2-xz=0, & 
\cC_5: x^2-yz = 0, & 
\cC_6 : xy-z^2 = 0, \\ 
\endarray
$$
$$
\array{ll}
\cC_7 : x^2-xy+\xi y^2-\xi xz-yz+z^2=0, & \cC_8=\bar\cC_7, \\ 
\cC_9: x^2-\xi xy+\xi y^2-\xi xz-yz+\xi z^2 = 0, & \cC_{10}=\bar\cC_9, \\
\cC_{11}: x^2-\xi xy+y^2-xz-yz+\xi z^2 = 0, & \cC_{12}=\bar\cC_{11}
\endarray
$$
where $\xi^2+\xi+1=0$. 
Note that 
$$9f_4f_5f_6+(2\xi + 1)f_7f_{10}f_{11}-(2\xi + 1)f_8f_{9}f_{12}=0,$$
$$9f_4f_5f_6+(\xi + 2)f_7f_{10}f_{11}-(\xi + 2)(f_1f_2f_3)^2=0,$$
where $L_{i} = V(f_{i})$ with $i \in \{1,2,3\}$ and $C_{j} = V(f_{j})$ with $j \in \{4, \ldots , 12\}$.
 As in the previous example, $\mathcal{A}$ is the reduced union of four singular members of a pencil of sextics: three members are unions of three smooth conics and the fourth is the union of three double lines.
Let us briefly describe some aspects of the weak combinatorics of $\mathcal{A}$. It has nine nodes, corresponding to each pair in the triples $(\cC_4, \cC_5, \cC_6)$, $(\cC_7, \cC_{10}, \cC_{11})$, $(\cC_8, \cC_{9}, \cC_{12})$, and three singular points of multiplicity $11$, namely $P=[\xi:\bar\xi:1]$, $Q=[\bar\xi:\xi:1]$, and $R=[1:1:1]$. The points $P,Q,R$ are the base points of the pencil, and we have $L_1 = \overline{PQ}, L_2 = \overline{PR}, L_3=\overline{QR}$, and $\{P,Q,R\} \subset \cC_i$ for every $i \in \{4,\ldots,12\}$.

The singularities $P,Q,R$ are not ordinary. Take, for instance, point $P$. In the notations from the beginning of Section 
 \ref{ss:weak_comb}, we have
 $$\mu_P(\delta_i, \delta_j) =2 \; \Longleftrightarrow \; \{i,j\} \in \{\{4,10\}, \{5, 9\}, \{6, 7\}, \{4,12\}, \{5, 8\}, \{6, 9\}, \{7, 9\}, \{10, 12\}, \{8, 11\}\}.$$
In all the other cases, $\mu_P(\delta_i, \delta_j) =1$. Due to the symmetry of the arrangement $\mathcal{A}$, the combinatorics around the points $Q,R$ mirrors the one of $P$, so we will omit the details for these two singularities.
 
 Since the degree of the reduced homogeneous polynomial that defines $\mathcal{A}$ is $21$ the Alexander polynomial of its Milnor fiber is of type
$$
 \Delta_{\mathcal{A}}(t) = (t-1)^{11}\phi_3^{b_3}\phi_7^{b_7}\phi_{21}^{b_{21}}.
 $$
 In order to compute upper bounds for the multiplicities of the primitive roots whose  order is equal to a power of a prime, i.e. $b_3, b_7$, we turn to Theorem \ref{thm:inequality}. 
Let $(A^*_{W_{\mathcal{A}},\ZZ_p},\cdot\wedge\w_1)$ be the Combinatorial Aomoto complex associated with the weak combinatorics of $\mathcal{A}$, and $\w_1:=\sum_i \sigma_i\in A^1_{W_{\mathcal{A}},\ZZ_p}$. Then for $p=3,7$ one has 
$$b_p \leq \dim_{\mathbb{Z}_p} H^1(A^*_{W_{\mathcal{A}},\mathbb{Z}_p},\cdot\wedge\w_1).$$
The right hand side of this inequality is easily computable using the linear system derived from \eqref{eq:det}, and we get 
$$b_3 = 0, \; b_7 \leq 2.$$
We proceed the same way with the (sub)arrangement $\mathcal{B}$. In this case $$
 \Delta_{\mathcal{B}}(t) = (t-1)^{8}\phi_2^{b_2}\phi_3^{b_3}\phi_{6}^{b_{6}}\phi_9^{b_9}\phi_{18}^{b_{18}}.
 $$
 and we obtain by Theorem \ref{thm:inequality} and \eqref{eq:det} that 
 $$b_2 \leq 2, \; b_3, b_9 \leq 1.$$
 The Alexander polynomials of $\mathcal{A}, \mathcal{B}$ are computed via \verb}SINGULAR} in \cite[Theorem 6.4]{DPS}.\\
\end{example}
\begin{example}
\label{ex:Thm_5.4_DPS}
Consider the family $\cC(\lambda)$ of non-degenerate Hesse arrangements of conics whose Alexander polynomials are computed in \cite[Theorem 5.4]{DPS}. It consists of $12$ smooth conics, organized as four reduced members in a Halphen pencil of index $2$. Each member thus consists in a union of three smooth conics. The pairs of conics inside each union intersect in $4$ distinct points, one of these points is an ordinary double point of the curve, and the remaining three intersection points, considered as singularities of the curve, are ordinary points of multiplicity $8$. In total, the singular locus $\Sing(\cC(\lambda))$ consists of $12$ ordinary double points and $9$ ordinary points of multiplicity $8$.
For more details on the combinatorics of $\cC(\lambda)$, which is a realization of the abstract configuration of conics and points $(12_6, 9_8)$,  see \cite{DLPU}. For an arbitrary arrangement in this family, the only possibly non-trivial multiplicities of the primitive roots of the Alexander polynomial whose order is equal to a power of a prime are $b_2, b_3, b_4, b_8$.
If we take into account the weak combinatorics of these arrangements, then by Proposition \ref{prop:ordinary_point} and Theorem \ref{thm:inequality} we get immediately 
$$b_3=0.$$ As for the rest of the multiplicities, the same two results imply $$b_2, b_4, b_8 \leq 3.$$
\end{example}

\section{Pencils and modular inequalities: lower bounds}
\label{sec:pencils}

\subsection{A definition of quasi fiber--type curves}
\label{subsubsect:quasi}
Assume that a curve $\cC$ is such that its complement $\tilde X=\PP^2\setminus\cC$ admits a surjection
$f:\tilde X\to S$ onto a hyperbolic orbifold $S=\PP^1_{(r,\bar{m})}$, that is, $\chi^{\orb}(S)<0$. 

In that case there exists a partition $\cC=\cC_1\cup\cC_2\cup \ldots \cup\cC_{r}$ with $\cC_i=\{F_i=0\}$, some $m_{i,j}>0$ for which $F_i=\prod_j F_{i,j}^{m_{i,j}}$ ($F_{i,j}$ irreducible), and $H_i$ such that $G_i=F_iH^{m_i}_i$,
where $\bar m = (m_1, \ldots, m_r)$, $d=\deg G_{i}$ for all $i \in \{1,\ldots,r \}$, satisfying that $G_1, \ldots,G_{r}$ are aligned in the projective space of curves of degree~$d$. 

For convenience, we assume $m_{i}=0$ if and only if $H_i$ is constant. If, in addition, the pencil is \emph{primitive}, that is, its generic member is irreducible, we will refer to this type of curve as \emph{quasi fiber--type}. 

If $\varphi$ is the collection $(F_i,m_{i,j},H_i,\bar m)$ then we will simply refer to its quasi fiber--type decomposition as $\cC_\varphi$. For simplicity, we will assume $\gcd(m_{i,j},j)=1$ for every $i=1,\ldots,r$. The following results can easily be extended without this condition, but the statements become more involved.

We also say a quasi fiber--type curve has \emph{index} $k\geq 1$ if the pencil containing $G_1,\ldots,G_r$ contains also a multiple member~$G^k$ outside~$G_i$ and $k|m_i$ for all $i \in \{1, \ldots ,r \}$.

Note that the existence of a quasi fiber--type curve of index $k$ implies the existence of a surjection from the fundamental group of its complement $\tilde X$ as follows
\begin{equation}
\label{eq:proper-surj}
\pi_1(\tilde X)\overset{f_*}\surj \pi^{{\rm orb}}_1(\PP^1_{\bar{m}'}),
\end{equation}
where $\bar m'=(k,m_1,\ldots,m_r)$, see~\cite{BCM} for details.

\begin{remark}
\label{rem:halphen_pencil}
A quasi fiber--type curve of index $k$ for which $m_i=0$ with $i \in \{1, \ldots ,r\}$ and $k>1$ is also known as a \emph{fiber--type} or \emph{pencil--type} arrangement associated with a Halphen pencil of index $k$, see~\cite[Section 2]{DLPU}. A (quasi) fiber--type curve of index $k=1$ is simply a (quasi) fiber--type curve.

Also note that a quasi fiber--type curve can have more than one index. For instance, if $F_{\lambda,\mu}:=\lambda F_p^q+\mu F_q^p$, where $F_k$, is a homogeneous polynomial of degree $k$ in three variables, then $\cC=\cC_1\cup \ldots \cup\cC_r$, where $\cC_i=\{F_{\lambda_i,\mu_i}=0\}$ and $[\lambda_i:\mu_i]\in \PP^1\setminus\{[0:1],[1:0]\}$ with $i=1, \ldots ,r$ are different points, is a quasi fiber--type curve of index $p$ or $q$.
\end{remark}

\subsubsection{Quasi fiber--type structures and resonance varieties}

Let $\cC_\varphi$ be a quasi fiber--type curve as above, we denote by $\sigma_{i,j}$ a meridian of the irreducible component $\cC_{i,j}=\{F_{i,j}=0\}$.
We will show how a quasi fiber--type decomposition of index $p$ in a curve implies the existence of positive dimensional components in the resonance variety of characteristic~$p$.

\begin{definition}
\label{def:const_on_fiber}
Let $\cC_\varphi$ be a quasi fiber--type curve defined by the union of $r$ quasi fibers $\cC_1, \dots, \cC_r$. An element 
$\w:= \sum_{i,j} m_{i,j} \alpha_{i,j} \sigma_{i,j}\in A^1_{W_\cC,\kk}$ is said to be 
\emph{associated with $\cC_\varphi$} if $\alpha_{i,j}$ is independent of~$j$ for any fixed~$i$.
\end{definition}

\begin{remark}
Consider $\cR_{\varphi,\kk}\subset A^1_{W_\cC,\kk}$ the set of all 1-cochains associated with $\cC_\varphi$. 
Note that the condition $\gcd(m_{i,j}: j)=1$ for all $i=1,...,r$ implies 
\begin{equation}
\label{eq:R_phi}
\dim_\kk \cR_{\varphi,\kk} = r.
\end{equation}
\end{remark}

\begin{thm}
\label{thm:fibered}
Consider $\cC_\varphi=\cC_1\cup\cdots\cup\cC_r$ a quasi fiber--type curve, $\mathrm{char}(\kk)=p$, 
such that $m_i=0\in\kk$ for all $i=1,...,r$. Assume $\omega:=\sum_{i,j} m_{i,j}\alpha_{i}\sigma_{i,j}\in\cR_{\varphi,\kk}$ is associated with $\cC_\varphi$. Then, 
\begin{enumerate}
    \item\label{thm:fibered:1} If $\sum_i \alpha_i=0\in\kk$,
then $\omega\in \cR_{r-2}(W_\cC,\kk)$, that is,
\begin{equation}
\label{eq:fibered:general}
h^1(A^*_{W_{\cC},\kk},\cdot\wedge\w)\geq r-2 
\end{equation}
    \item\label{thm:fibered:2} If $\cC_\varphi$ has index $k=0\in\kk$,
then $\cR_{\varphi,\kk}\subset \cR_{r-1}(W_\cC,\kk)$, that is,
\begin{equation}
\label{eq:fibered:indexp}
h^1(A^*_{W_{\cC},\kk},\cdot\wedge\w)\geq r-1.
\end{equation}
\end{enumerate}
\end{thm}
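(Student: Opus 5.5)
We will exhibit an explicit linear subspace of $\ker(\cdot\curlywedge\w)\subset A^1_{W_\cC,\kk}$ of the appropriate dimension and then quotient by the image $\kk\w$ of $A^0$. The natural candidates are cochains associated with $\cC_\varphi$: for $\beta=\sum_{i,j} m_{i,j}\beta_i\sigma_{i,j}\in\cR_{\varphi,\kk}$ we check $\beta\curlywedge\w=0$ using the resonance conditions. Since the conditions in Corollary~\ref{cor:eq_det} are only necessary when some $p\mid d_{i,j}$, we will compute $\beta\curlywedge\w$ directly from \eqref{eq:product2}: we show that every coefficient $\lambda_{\delta,P}$ and every $\lambda_{i,j}$ at infinity vanishes (or that the corresponding basis element is zero in $A^{2,\infty}_W$).

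\textbf{Local computation.} At a singular point $P$ and a branch $\delta$ of $\cC_{i_0,j_0}$ the coefficient is
$$
\Big|\begin{array}{cc}\beta_{i_0}m_{i_0,j_0} & \alpha_{i_0}m_{i_0,j_0}\\ \sum_{i}\beta_i\mu_P(\delta,\cC_i) & \sum_i \alpha_i\mu_P(\delta,\cC_i)\end{array}\Big|,
$$
where $\mu_P(\delta,\cC_i):=\sum_j m_{i,j}\mu_P(\delta,\cC_{i,j})$ is the intersection multiplicity of $\delta$ with the divisor $F_i=0$. The key geometric input is the pencil. Since $G_i=F_iH_i^{m_i}$ lie in a pencil, for $P$ a base point and $\delta$ not contained in $G_i$ we have $\mu_P(\delta,G_i)=\mu_P(\delta,\text{generic member})=:e$, independent of $i$. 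Hence $\mu_P(\delta,\cC_i)=e-m_i\mu_P(\delta,H_i)\equiv e \pmod p$, because $m_i=0\in\kk$. For the component $i_0$ containing $\delta$, the multiplicity $\mu_P(\delta,\cC_{i_0})$ is computed from the remaining local branches and must be handled separately; this is where $m_{i_0,j_0}$ enters. If $P$ is not a base point, then $\delta$ meets only its own fiber, so only the $i_0$ terms survive and the determinant is $m_{i_0,j_0}\mu\,(\beta_{i_0}\alpha_{i_0}-\alpha_{i_0}\beta_{i_0})=0$. If $P$ is a base point, expanding gives the contributions $e\,m_{i_0,j_0}(\beta_{i_0}\sum_{i\neq i_0}\alpha_i-\alpha_{i_0}\sum_{i\neq i_0}\beta_i)$, with the $i_0$ term cancelling as before. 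The infinity coefficients reduce similarly using $\deg F_i\equiv d$ mod $p$, again because $m_i\deg H_i\equiv 0$.

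\textbf{Case (1).} If $\sum\alpha_i=0$ and we also impose $\sum\beta_i=0$, then $\sum_{i\ne i_0}\alpha_i=-\alpha_{i_0}$ and likewise for $\beta$, so the base point expression vanishes. The subspace $\{\beta\in\cR_{\varphi,\kk}:\sum\beta_i=0\}$ has dimension $r-1$ by \eqref{eq:R_phi}; it contains $\w$, so $h^1\ge r-2$.

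\textbf{Case (2).} If the index $k$ vanishes in $\kk$, the multiple member $G^k$ of the pencil gives $e=\mu_P(\delta,G^k)=k\,\mu_P(\delta,G)\equiv0$ at every base point. All base point coefficients then vanish with no constraint on $\beta$, so the whole $r$-dimensional $\cR_{\varphi,\kk}$ lies in the kernel, and $h^1\ge r-1$ for every $\w\in\cR_{\varphi,\kk}$. (For $\w=0$ the bound $h^1=r\ge r-1$ is immediate.)

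\textbf{Main obstacle.} The delicate step is the base-point computation. Branches of non-reduced fibers and branches of $H_i$ passing through base points complicate the identity $\mu_P(\delta,\cC_i)\equiv e$, and one must also verify the infinity terms $\bar\psi_\infty$ carefully when $p\mid d_{i,j}$. I would first treat branches $\delta$ in general position with respect to the other fibers, then argue using intersection multiplicities with the full divisors $G_i$, which is additive and pencil-invariant.
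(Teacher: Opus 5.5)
Your proposal is correct and follows essentially the paper's argument: you show that the associated cochains (restricted to $\sum_i\beta_i=0$ in part (1)) lie in $\ker(\cdot\curlywedge\w)$ by checking the resonance conditions at non-base points, at base points via the pencil-invariant multiplicity $\mu_\delta$ together with $m_i\equiv 0$ (and $\mu_\delta=k\,\mu_P(\delta,G)\equiv 0$ in part (2)), and at infinity, and then you count dimensions using \eqref{eq:R_phi}. Your cancellation of the $i_0$-term at base points is actually more careful than the paper's, but in part (2) you should say explicitly that the infinity coefficients vanish because $d=k\deg G\equiv 0$ in $\kk$; this is the same fact the paper uses through the factor $k\,d_G$ in \eqref{eq:non_base_P_lambda}.
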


\begin{proof}
We use the notation $G_1, \ldots ,G_r,G^k$ for the polynomials in the pencil given by $\cC_\varphi$ as described at the beginning of this section. Let $\cC_{i,j}: F_{i,j}=0$. 
Given a point $P\in \cC$ and a branch $\delta$ of $\cC_i$, we denote by $\mu_P(\delta,G_i)$ the multiplicity of the intersection of $\delta$ with $G_i$ at $P$. Note that 
$\mu_P(\delta,G^k)=k\mu_P(\delta,G)$.

The singular locus $\Sing(\cC)$ can be decomposed into two disjoint sets $\Sing(\cC)=B\cup N$ according to whether or not they are base points. In particular, we have the following.
\begin{enumerate}
\item \label{item:base}
If $P\in B$ (base points), then $G_i(P)=0$ for all $i \in \{1, \ldots ,r\}$. Moreover, given a branch of $\cC_i$ at $P$ (i.e. $\phi_P(\delta)=i$), then 
\begin{equation} \label{eq:mu_delta}
\mu_P(\delta,G_j)=
\mu_P(\delta,F_j)+\mu_P(\delta,H_j^{m_j})=
\sum_k m_{j,k}\mu_P(\delta,\cC_{j,k})+m_j\mu_P(\delta,H_j)=\mu_\delta
\end{equation}
has the same value regardless of $j\neq i$. Moreover, since $G^k$ is a fiber in the linear system, one has $\mu_\delta=k\mu_P(\delta,G)=0\in\kk$. Since $m_j=0\in\kk$ this implies 
\begin{equation}\label{eq:quasift}
\sum_k m_{j,k}\mu_P(\delta,\cC_{j,k})=0\in\kk \textrm{ for any } j\neq i.
\end{equation}
\item \label{item:nonbase}
If $P\in N$ (non-base points) is such that $\phi_P(\Delta_P)=\{i\}$, then $F_j(P)=0$ if and only if $i=j$.
\end{enumerate}
We will use Conditions~\eqref{eq:lambda_i} and \eqref{eq:det} 
to show that $\w_1\curlywedge \w_2=0$ for any two 1-forms $\w_1,\w_2\in \cR_{\varphi,\kk}$ associated with $\cC_\varphi$, say $\w_1 = \sum_{i,j} m_{i,j}\alpha_{i}\sigma_{i,j}, \; \w_2 = \sum_{i,j} m_{i,j}\beta_{i}\sigma_{i,j}$. 
As in~\eqref{eq:awb} we use the notation 
$$
\w_1\curlywedge\w_2=
\sum_{\delta,P}\lambda_{\delta,P} \bar\psi_{P}^\delta+
\sum_{i,j} \lambda_{i,j}\bar\psi_\infty^{i,j}.
$$
We will show $\lambda_{\delta,P}=\lambda_{i,j}=0$ for all $\delta\in\Delta_P$, all $P\in S$, and $i=1,...,r$.
Consider $P\in N$, then 
condition~\eqref{eq:det} becomes
\begin{equation} \label{eq:non_base_P}
\lambda_{\delta,P}=    
\left|
\array{cc}
\alpha_{i,j} & \beta_{i,j}\\
\sum_s \mu_P(\delta,\cC_{i,s})m_{i,s}\alpha_{i,s} & \sum_s \mu_P(\delta,\cC_{i,s})m_{i,s}\beta_{i,s}\\
\endarray
\right|=0.
\end{equation}
since $\alpha_{i,j}=\alpha_{i,s}=\alpha_i$
and 
$\beta_{i,j}=\beta_{i,s}=\beta_i$ by hypothesis.

Assume now that $P\in B$. In that case, condition~\eqref{eq:det} is also satisfied 
since 
\begin{equation} \label{eq:q_fibr_mod_p}
    \sum_{i,k} \mu_P(\delta,\cC_{i,k})m_{i,k}\alpha_{i,k}=
\mu_\delta \sum_i \alpha_i = 0\in\kk.
\end{equation}
The equalities follow from conditions \eqref{eq:mu_delta} and \eqref{eq:quasift} since $P\in B$. 

Condition~\eqref{eq:lambda_i} becomes
\begin{equation} \label{eq:non_base_P_lambda}
\lambda_i=
\left|
\array{cc}
\alpha_{i,j} & \beta_{i,j}\\
\sum_{s,t} d_{s,t}m_{s,t}\alpha_{s,t} & \sum_{s,t} d_{s,t}m_{s,t}\beta_{s,t}\\
\endarray
\right|=
    \left|
\array{cc}
\alpha_{i} & \beta_{i}\\
kd_G\sum_{s} \alpha_{s} & kd_G\sum_{s} \beta_{s}\\
\endarray
\right|=
0
\end{equation}
since either $k=0\in\kk^*$ or $\sum_{s} \alpha_{s}=\sum_{s} \beta_{s}$ by hypothesis.

The result on the dimension follows by~\eqref{eq:R_phi}.

Note that equality~\eqref{eq:q_fibr_mod_p} is also true if $\sum_i\alpha_i=0\in\kk$ and $m_i=0\in\kk$ which is the hypothesis of the first part. The dimension $r-2$ is given by~\eqref{eq:R_phi} and the extra condition $\sum_i\alpha_i=0\in\kk$.
\end{proof}

Under certain extra conditions on $p$, Theorem~\ref{thm:fibered} is sharp and it calculates the depth of the component determined 
by the canonical 1-form $$\w_\varphi:= \sum_{i,j} m_{i,j} \sigma_{i,j}\in A^1_{W_\cC,\kk}.$$
In order to state the result, we need some notation. Consider $\cC_\varphi=\cC_1\cup\dots\cup\cC_r$ a quasi fiber--type curve and 
$\w:= \sum_{i,j} m_{i,j} \alpha_{i} \sigma_{i,j}\in A^1_{W_\cC,\kk}$ associated with $\cC_\varphi$. For any subcurve $\cC_i$, we denote by
$\w_{i}:= \sum_{j} m_{i,j} \alpha_{i} \sigma_{i,j}\in A^1_{W_{\cC_i},\kk}$ the projection of $\w_{\varphi}$ to $A^1_{W_{\cC_i},\kk}$.
Since $\w$ is associated with $\cC_\varphi$, one can write $\w_{i}:= \alpha_{i} \sum_{j} m_{i,j} \sigma_{i,j}$.
Also, note that one can formaly recover $\w$ as  $\w=\sum_i \w_{i}$.

\begin{lemma}
\label{prop:fiber_p_reductive}
Let $\cC_\varphi$ be a quasi fiber--type curve, $\mathrm{char}(\kk)=p$, and $\w := \sum_{i,j} m_{i,j}\alpha_i \sigma_{i,j}\in A^1_{W_\cC,\kk}$
a non-coordinate 1-cochain associated with $\cC_\varphi$ such that $\cC_i$ is completely $p$-reductive w.r.t. $\w_{i}$ for any $i$.
Then 
$\ker(A^1_{W_{\cC}, \kk}\rightmap{\cdot\curlywedge\w} A^2_{W_{\cC},\kk})\subset\cR_{\varphi,\kk}$.
\end{lemma}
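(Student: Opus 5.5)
The plan is to take an arbitrary $\beta=\sum_{i,j}\beta_{i,j}\sigma_{i,j}$ with $\beta\curlywedge\w=0$ and show that, for each fixed $i$, the vector $(\beta_{i,j})_j$ is proportional to $(m_{i,j}\alpha_i)_j$. Writing $\beta_{i,j}=\lambda_i m_{i,j}\alpha_i$ with $\lambda_i\in\kk$, we get $\beta=\sum_{i,j}m_{i,j}(\lambda_i\alpha_i)\sigma_{i,j}$. Its coefficients are independent of $j$ in the sense of Definition~\ref{def:const_on_fiber}, so $\beta\in\cR_{\varphi,\kk}$. Because $\w$ is non-coordinate, every $m_{i,j}\alpha_i\neq 0$ in $\kk$, and in particular $\alpha_i\neq0$. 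This lets us absorb the scalar $\lambda_i$ as described.

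To get this proportionality for a fixed $i$, I would repeat the inductive argument of Proposition~\ref{prop:completelypreductive}, but only for the subcurve $\cC_i$ and the cochain $\w_i$. The hypothesis says $\cC_i$ is completely $p$-reductive w.r.t. $\w_i$, so there is a sequence of points $P\in\Sing(\cC_i)$ where successive classes of components of $\cC_i$ are $p$-transversal w.r.t. $(\w_i,\pi)$. The key observation is that the resonance conditions~\eqref{eq:det} for $\cC$ at such a point reduce to those for $\cC_i$. The conditions~\eqref{eq:det} for $\beta\curlywedge\w=0$ in $A^*_{W_\cC}$ hold at every $P\in\Sing(\cC)$ and every branch $\delta$, by Corollary~\ref{cor:eq_det}. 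The points used in the reduction are non-base points of the pencil, which holds because the reduction requires $\#\pi(\phi_P(\Delta_P))=2$ inside $\cC_i$. At such a point only components of $\cC_i$ pass through $P$ (item~\eqref{item:nonbase} in the proof of Theorem~\ref{thm:fibered}). Hence the sums $\sum_k\mu_P(\delta,\cC_k)(\cdot)_k$ in~\eqref{eq:det} involve only indices $(i,s)$, and these conditions coincide with the resonance conditions of the pair $(\beta_i,\w_i)$ in $A^*_{W_{\cC_i}}$, where $\beta_i:=\sum_j\beta_{i,j}\sigma_{i,j}$. The computation~\eqref{eq:preduction}--\eqref{eq:preduction2} then runs verbatim: at each reduction step, $\alpha_{i_0}\neq0$ together with the $p$-transversality nonvanishing forces $\lambda_1=\lambda_2$. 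When the quotient of the index set of $\cC_i$ becomes a singleton, we get $\beta_{i,j}=\lambda_i m_{i,j}\alpha_i$ for all $j$.

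The main obstacle is the base case and the bookkeeping in this reduction. A point of $\Sing(\cC_i)$ might also lie on other fibers $\cC_{i'}$, for instance at a base point, where the extra terms would spoil the reduction. I would handle this by noting that at a base point of the pencil every $G_{i'}$ vanishes. If $P$ lies on $\cC_{i'}$ for some $i'\neq i$, then $P$ is a base point, so $\phi_P(\Delta_P)$ meets every fiber $\cC_{i'}$ with $m_{i'}=0$; hence such a point cannot be $p$-transversal. This is the same reasoning as in the proof that fiber-type curves are not completely $p$-reductive. The precise point I would verify is that the notion "completely $p$-reductive w.r.t. $\w_i$" for $\cC_i$ only uses points of $\Sing(\cC_i)$ at which the local configuration of $\cC$ equals that of $\cC_i$. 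If a base point with all other fibers contributing zero in $\kk$ can occur (via~\eqref{eq:quasift}), I would show those extra terms vanish mod $p$ using~\eqref{eq:mu_delta}. Once this locality is settled, the remaining steps are routine.
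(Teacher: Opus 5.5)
You follow the paper's route: fix $i$, rerun the $p$-reduction of Proposition~\ref{prop:completelypreductive} on $\cC_i$ to get $(\beta_{i,j})_j$ proportional to $(m_{i,j}\alpha_i)_j$, then use $m_{i,j}\alpha_i\in\kk^*$. The final bookkeeping is fine. The gap is the step you flag yourself: transferring the reduction of $\cC_i$ to the resonance conditions of $\beta\curlywedge\w=0$ in $W_\cC$.

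Your claim that the reduction points are non-base points does not follow from $\#\pi(\phi_P(\Delta_P))=2$. That count is taken in $W_{\cC_i}$, and there a base point where exactly two components of $\cC_i$ meet looks exactly like a $p$-transversal point. This happens in the paper itself. In Example~\ref{ex:DPS_thm6.4} the lines $L_1,L_2,L_3$ of the multiple fiber meet only at $P,Q,R$, which are the base points. In $W_{L_1\cup L_2\cup L_3}$ these are nodes, so that fiber is completely $7$-reductive, and this is what is needed to invoke Theorem~\ref{thm:Milnor_nonreduced} in Remark~\ref{rem:ex_milnor}. In $W_\cA$, however, all nine conics also pass through $P,Q,R$.

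At such a point, for a branch $\delta$ of $\cC_{i,i_0}$, the determinant~\eqref{eq:det} equals the $\cC_i$-determinant plus $\beta_{i,i_0}E_\w-m_{i,i_0}\alpha_iE_\beta$. Here $E_\w=\sum_{s\neq i}\alpha_s\sum_t m_{s,t}\mu_P(\delta,\cC_{s,t})$ and $E_\beta=\sum_{s\neq i}\sum_t\mu_P(\delta,\cC_{s,t})\beta_{s,t}$. For instance, in the example above with $p=7$ and $\w=\sum_{i,j}\sigma_{i,j}$, the condition at $P$ for the branch of $L_1$ is $3\beta_{L_1}-\beta_{L_2}-\sum_c\beta_c=0$, with $c$ running over the nine conics. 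That is not $\beta_{L_1}=\beta_{L_2}$, so \eqref{eq:preduction}--\eqref{eq:preduction2} do not run verbatim.

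Your fallback does not repair this. Condition~\eqref{eq:quasift} rests on $m_s=0$ and $\mu_\delta=0$ in $\kk$, and neither is a hypothesis of the lemma. Even when it applies, it only kills $E_\w$. The term $E_\beta$ involves the unknown coordinates of $\beta$ on the other fibers and remains uncontrolled.

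There are two ways to close the gap. You can assume, or read the hypothesis as saying, that the $p$-transversal points used for each $\cC_i$ lie off the base locus. Then item~\ref{item:nonbase} in the proof of Theorem~\ref{thm:fibered} shows the local conditions in $W_\cC$ and $W_{\cC_i}$ coincide, and your argument is complete. Otherwise you need a new argument at base points that couples the conditions coming from different fibers. Note that the paper's one-line proof ("following the ideas in the proof of Proposition~\ref{prop:completelypreductive}") makes the same transfer without comment, so it too is only justified under the off-base-locus reading.
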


\begin{proof}
Let  $\eta \in A^1_{W_\cC,\kk}$ be an arbitrary 1-cochain. Since $m_{i,j}\in\kk^*$ for all $i$ and $j$, we can assume without loss of generality 
$\eta = \sum_{i,j} m_{i,j} \beta_{i,j}\sigma_{i,j}$. Let $i$ be arbitrary and fixed. 
Assume $\eta \curlywedge \w=0$. Following the ideas in the proof of Proposition~\ref{prop:completelypreductive} one obtains:
$$
\left|
\array{cc}
m_{i,j} \alpha_{i}& m_{i,j} \beta_{i,j}\\
m_{i,k} \alpha_{i}& m_{i,k} \beta_{i,k}
\endarray
\right|= 
\alpha_im_{i,j}m_{i,k}
\left|
\array{cc}
1& \beta_{i,j}\\
1& \beta_{i,k}
\endarray
\right| =
0,
$$
for any $j$ and $k$. Since $\alpha_im_{i,j}\in\kk^*$, this equality implies $\beta_{i,j} = \beta_{i,k}$, i.e., $\beta_{i,*}$ does not depend on the choice of $*$.
\end{proof}

\begin{theorem}
\label{thm:p.index}
Let $\cC_\varphi=\cC_1\cup\dots\cup\cC_r$ be a quasi fiber--type curve, $\mathrm{char}(\kk)=p$, such that $m_i=0\in\kk$ for all $i=1,\ldots,r$. 
Define a non-coordinate 1-form $\w:=\sum_{i,j} m_{i,j}\alpha_i\sigma_{i,j}$ associated with $\cC_\varphi$ such that $\sum_i\alpha_i=0\in\kk$. If each $\cC_i$ is completely $p$-reductive w.r.t. $\w_i$ for any $i$, then
$$h^1(A^*_{W_\cC,\kk},\cdot\curlywedge\w)=
\begin{cases}
r-1 & \textrm{ if } \mu_\delta=0\in\kk\ \forall\delta\in\Delta_P \textrm{ local branch and }\ \forall P\in\Sing(\cC) \textrm{ base point,}\\
r-2 & \textrm{ otherwise.}
\end{cases}
$$
Moreover, if $\cC_\varphi$ has index $p>0$, then 
$\ker(A^1_{W_\cC,\kk}\xrightarrow{\cdot\curlywedge\w_\varphi} A^2_{W_\cC,\kk})=\cR_{\varphi,\kk}$.
In particular,
$$h^1(A^*_{W_\cC,\kk},\cdot\curlywedge\w_\varphi)= r-1.$$
\end{theorem}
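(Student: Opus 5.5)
By Lemma~\ref{prop:fiber_p_reductive}, the completely $p$-reductive hypothesis on each $\cC_i$ gives $K:=\ker(A^1_{W_\cC,\kk}\xrightarrow{\cdot\curlywedge\w}A^2_{W_\cC,\kk})\subset\cR_{\varphi,\kk}$. So the plan is to compute $K$ as a linear subspace of the $r$-dimensional space $\cR_{\varphi,\kk}$ from~\eqref{eq:R_phi}, and then subtract the one-dimensional image of $A^0$ spanned by $\w$. Concretely, I write $\eta=\sum_{i,j}m_{i,j}\beta_i\sigma_{i,j}\in\cR_{\varphi,\kk}$ and test the resonance conditions~\eqref{eq:det} and~\eqref{eq:lambda_i} for the pair $(\eta,\w)$, reusing the computations in the proof of Theorem~\ref{thm:fibered}.

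At non-base points the condition~\eqref{eq:non_base_P} holds automatically, because both forms are constant along each quasi-fiber. At a base point $P$ and a branch $\delta$ with $\phi_P(\delta)=i$, I will use~\eqref{eq:mu_delta}, \eqref{eq:quasift} and $m_j=0\in\kk$. This rewrites the lower row of~\eqref{eq:det} as $\mu_\delta\sum_{j\ne i}\alpha_j+(\text{own-fiber term})\,\alpha_i$; I still need to check carefully how the $j=i$ term combines. Using $\sum_j\alpha_j=0$, the determinant should reduce to a multiple of
\[
\mu_\delta\left|\array{cc}\alpha_i & \beta_i\\ 0 & \sum_j\beta_j\endarray\right|=\mu_\delta\,\alpha_i\sum_j\beta_j .
\]
Similarly, the infinity condition~\eqref{eq:lambda_i} becomes $kd_G\,\alpha_i\sum_s\beta_s$. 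Since $\w$ is non-coordinate, $\alpha_i\neq0$. It follows that if some base-point branch has $\mu_\delta\ne0\in\kk$, then $K=\{\eta\in\cR_{\varphi,\kk}:\sum_i\beta_i=0\}$, of dimension $r-1$. If instead all $\mu_\delta$ vanish, then $K$ contains all of $\cR_{\varphi,\kk}$, except that the infinity condition must still be checked. Here I expect to use that $\mu_\delta=k\mu_P(\delta,G)$, combined with a Bezout-type summation as in the proof of Theorem~\ref{thm:resonance}, which forces $kd_G$ to vanish under the hypothesis, or makes the $\bar\psi_\infty$ coordinates irrelevant when $p\mid d_{i,j}$. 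In that case $\dim K=r$.

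Since $\w\in K$ and $\w\neq0$, we get $h^1=\dim K-1$, which gives $r-2$ or $r-1$ as claimed. For the \emph{moreover} part: when the index $k$ equals $p$, every $\mu_\delta=k\mu_P(\delta,G)$ vanishes in $\kk$ and $kd_G=0$. Theorem~\ref{thm:fibered}\eqref{thm:fibered:2} then gives $\cR_{\varphi,\kk}\subset K$, and Lemma~\ref{prop:fiber_p_reductive} gives the reverse inclusion. Hence $K=\cR_{\varphi,\kk}$ and $h^1=r-1$, applied to $\w_\varphi$ (all $\alpha_i=1$; note the lemma is applied with $\w_\varphi$, which is non-coordinate).

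The main obstacle is the base-point computation: tracking the contribution of the branch's own component $\cC_i$, and showing that the surviving determinant is exactly a nonzero multiple of $\mu_\delta\sum_j\beta_j$. A second delicate point is the infinity term in the ``$r-1$'' case. I would first try to derive $kd_G\sum\beta=0$ from summing base-point conditions via Bezout, mimicking the proof of Theorem~\ref{thm:resonance}.
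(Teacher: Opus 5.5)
Your proposal is correct and follows the paper's proof. Lemma~\ref{prop:fiber_p_reductive} gives $K\subset\cR_{\varphi,\kk}$, and the conditions \eqref{eq:non_base_P}, \eqref{eq:q_fibr_mod_p}, \eqref{eq:non_base_P_lambda} are then tested on $\cR_{\varphi,\kk}$; the \emph{moreover} part is Lemma~\ref{prop:fiber_p_reductive} together with Theorem~\ref{thm:fibered}\eqref{thm:fibered:2}, exactly as you say. Your base-point reduction is right: the own-fiber terms cancel, and for $\delta$ a branch of $\cC_{i,j}$ the determinant is $\pm m_{i,j}\mu_\delta\alpha_i\sum_s\beta_s$.

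The infinity term in the $r-1$ case, which you flagged and which the paper also passes over, can be closed as follows. Sum $\mu_\delta$ itself over all base-point branches of one component $\cC_{i,j}$ with $p\nmid d_{i,j}$. At base points that are smooth on $\cC$ one still has $\mu_\delta=m_s\mu_P(\delta,H_s)=0\in\kk$, so Bezout gives $d_{i,j}\,d=0$, where $d=kd_G$ is the degree of the pencil, and hence $d=0\in\kk$. If instead $p$ divides every $d_{i,j}$, then $d=\sum_j m_{i,j}d_{i,j}+m_i\deg H_i=0\in\kk$ directly.

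Your first suggested route, summing the conditions $\lambda_{\delta,P}$ as in Theorem~\ref{thm:resonance}, would not work here. It only controls the $\bar\psi^{i,j}_\infty$ with $p\nmid d_{i,j}$, and those are already zero in $A^{2,\infty}_W$.
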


\begin{proof}
Assume $\mu_\delta=0\in\kk$ at every $\delta\in\Delta_P$ local branch at the base points $P$ of $\cC_\varphi$. Denote $W=W_{\cC
}$ the weak combinatorics of the curve $\cC$. 
The inclusion $\ker(A^1_W \xrightarrow{\cdot\curlywedge\w} A^2_W)\subset \cR_{\varphi,\kk}$ is a consequence of Lemma~\ref{prop:fiber_p_reductive}. The converse inclusion follows since the equations ~\eqref{eq:non_base_P}, \eqref{eq:q_fibr_mod_p} and \eqref{eq:non_base_P_lambda} are satisfied for all $\w'= \sum_{i,j} m_{i,j}\beta_i\sigma_{i,j}\in\cR_{\varphi,\kk}$. Otherwise, 
$$\ker(A^1_W \xrightarrow{\cdot\curlywedge\w} A^2_W)= \bigg\{\w'=\sum_{i,j} m_{i,j}\beta_i\sigma_{i,j}\mid \sum_i\beta_i=0\in\kk \bigg\}\subset \cR_{\varphi,\kk }.$$
For the \emph{moreover} part, the inclusion $\ker(A^1_W \xrightarrow{\cdot\curlywedge\w_\varphi} A^2_W)\subset \cR_{\varphi,\kk}$ is also a consequence of Lemma~\ref{prop:fiber_p_reductive}.
The converse follows from the 
second part of Theorem~\ref{thm:fibered}.
\end{proof}

\begin{corollary}
\label{cor:C_not_p_reductive}
Let $\cC_\varphi= \cC_1 \cup \dots \cup \cC_r$ ($r\geq 2$) be a quasi fiber--type curve of prime index $p$, $\mathrm{char}(\kk)=p$, and $m_i=0\in\kk$ for all $i=1,\ldots,r$. If $\cC_i$ is completely $p$-reductive w.r.t. $\w_{\varphi,i}$ for any $i$ and $m_{i,j} \in \kk^*$ for all $i$ and $j$, then $\cC_\varphi$ is not completely $p$-reductive w.r.t.~$\w_\varphi$.
\end{corollary}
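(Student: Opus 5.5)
We argue by contradiction, comparing two computations of $h^1(A^*_{W_\cC,\kk},\cdot\curlywedge\w_\varphi)$. Suppose $\cC_\varphi$ were completely $p$-reductive w.r.t. $\w_\varphi=\sum_{i,j} m_{i,j}\sigma_{i,j}$. Since $m_{i,j}\in\kk^*$ for all $i,j$, the cochain $\w_\varphi$ is non-coordinate, so Proposition~\ref{prop:completelypreductive} applies and gives
$$
H^1(A^*_{W_\cC,\kk},\cdot\curlywedge\w_\varphi)=0 .
$$

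On the other hand, all the hypotheses of the \emph{moreover} part of Theorem~\ref{thm:p.index} hold:
\begin{itemize}
\item $\cC_\varphi$ is quasi fiber--type of index $p$, and $\mathrm{char}(\kk)=p$;
\item $m_i=0\in\kk$ for all $i$;
\item each $\cC_i$ is completely $p$-reductive w.r.t. $\w_{\varphi,i}$;
\item $\w_\varphi$ is non-coordinate and associated with $\cC_\varphi$ (take all $\alpha_i=1$).
\end{itemize}
That theorem therefore yields $\ker(\cdot\curlywedge\w_\varphi)=\cR_{\varphi,\kk}$ and hence $h^1=r-1\geq 1$, since $r\geq 2$. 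This contradicts the vanishing above, which proves the claim.

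If one prefers not to invoke the \emph{moreover} part directly, the same contradiction comes from Theorem~\ref{thm:fibered}\eqref{thm:fibered:2}. Since the index is $p=0\in\kk$, it gives $\cR_{\varphi,\kk}\subset\cR_{r-1}(W_\cC,\kk)$, so $\w_\varphi\in\cR_{r-1}(W_\cC,\kk)\subseteq\cR_1(W_\cC,\kk)$. That is, $h^1\geq r-1\geq1$. This alternative needs only the index hypothesis and $m_i=0\in\kk$, not the reductivity of the $\cC_i$.

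The only step needing care is bookkeeping: the proof of Theorem~\ref{thm:fibered}\eqref{thm:fibered:2} uses, in \eqref{eq:non_base_P_lambda}, that $k=0\in\kk$, which holds here because $k=p$. We must also confirm that $\dim_\kk\cR_{\varphi,\kk}=r$ from \eqref{eq:R_phi}, which uses $\gcd(m_{i,j}:j)=1$, so that the kernel of $\cdot\curlywedge\w_\varphi$ has dimension $r$ and, after quotienting by the image $\kk\w_\varphi$ of $A^0$, the cohomology has dimension $r-1$. Both points are standing assumptions of the section, so no real obstacle remains.
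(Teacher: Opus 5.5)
Your proof is correct and is exactly the paper's argument: Proposition~\ref{prop:completelypreductive}, applied to the non-coordinate $\w_\varphi$, gives $H^1=0$, which contradicts $h^1=r-1\geq 1$ from the \emph{moreover} part of Theorem~\ref{thm:p.index}. Your side remark is also valid, since the inclusion $\cR_{\varphi,\kk}\subset\ker(\cdot\curlywedge\w_\varphi)$ from Theorem~\ref{thm:fibered}\eqref{thm:fibered:2} already gives $h^1\geq r-1$; the complete $p$-reductivity of the $\cC_i$ is therefore not needed for this corollary (along that route the kernel is only known to have dimension at least $r$, which suffices).
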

\begin{proof}
Assume that $\cC_\varphi$ is completely $p$-reductive w.r.t. $w_\varphi$. Then, by Proposition 
 \ref{prop:completelypreductive}, $$H^1(A^*_{W_\cC,\ZZ_p},\cdot\curlywedge\w_{\varphi}) = 0.$$ 
 This contradicts Theorem~\ref{thm:p.index} since $r \geq 2$ and hence $\cC$ cannot be completely $p$-reductive. 
\end{proof}

\subsubsection{Examples}
\label{subsec:ex_resonance}

\begin{example}
\label{exam:conics}
The simplest example of Theorem~\ref{thm:fibered} is  a quartic $\cC=\cC_1\cup\cC_2$ given as
a union of two smooth conics with maximal contact at the point $P= [0:0:1]$. For instance $F_1=x^2-yz$, $F_2=y^2-x^2+yz$. Note that
$\cC$ is a fiber--type curve since $F_1+F_2=y^2$. Note that $P$ is a $p$-transversal point and hence $\cC$ is completely $p$-reductive 
w.r.t. any non-zero $\w\in A^1_{W_\cC}$
unless $p=2$. By Proposition~\ref{prop:completelypreductive} and Theorem \ref{thm:inequality}, the only possible roots of twisted Alexander polynomials of order powers of primes are roots of unity of order powers of 2.
Consider hence the case $\kk=\ZZ_2$, and denote by $W=W_\cC$ the weak combinatorics of the curve $\cC$. Note that $A_W^1=\langle \sigma_1,\sigma_2\rangle$,
$A_W^2=\langle \psi_P^{1,2},\bar \psi^1_\infty,\bar\psi^2_\infty\rangle$, where $P=[0:0:1]\in\PP^2$,
and $\sigma_1\wedge\sigma_2=4\psi_P^{1,2}+2\bar\psi^1_\infty-2\bar\psi^2_\infty=0 \, ({\rm mod} \, 2)$. Thus
$\omega=\alpha_1\sigma_1+\alpha_2\sigma_2\in \cR_1(\cC,\kk)^*$ if and only if $\mathrm{char}(\kk)=2$.

On the other hand, there is a surjection
$\pi_1(\tilde X)\surj \pi_1^{\orb} (\PP^1_{(0,0,2)})\cong\ZZ *\ZZ_2\cong \frac{\ZZ x_1*\ZZ x_2 *\ZZ y}{\langle x_1x_2y,y^2\rangle}$
such that $\sigma_i\mapsto x_i$, induced by a surjective map $f: \tilde X \rightarrow \PP^1_{(0,0,2)}$. The curve $\cC$ satisfies $V_1(\cC)=\{(t_1,t_2)\in (\CC^*)^2\mid t_1t_2+1=0\}$, which is the characteristic varieties of the affine complement of $\cC$ with respect to a transversal line $\cC_0$. Hence $f^*(-1,1,-1)=(-1,1)\in V_1(\cC)$.

This can be generalized to any degree where $\cC=\cC_1\cup\cC_2$ is given by $F_1=x^d-yz^{d-1}$, $F_2=y^d-x^d+yz^{d-1}$ satisfying $F_1+F_2=y^d$. Analogously as above, $\cC$ is completely $p$-reductive 
w.r.t. any non-zero $\w\in A^1_{W_\cC}$
if and only if $p\nmid d$. Hence, by Proposition~\ref{prop:completelypreductive} we restrict ourselves to the case $\kk=\ZZ_p$ where $p\mid d$. In that case, $\sigma_1\wedge\sigma_2=d^2\psi_P^{1,2}+d\bar\psi^1_\infty-d\bar\psi^2_\infty=0 \, ({\rm mod}\,  p)$ and hence $\omega=\alpha_1\sigma_1+\alpha_2\sigma_2\in \cR_1(\cC,\kk)^*$ if and only if $\mathrm{char}(\kk)=p|d$.
\end{example}

\begin{example}
\label{ex:TCQuartic1}
Denote by $\cC_4$ the tricuspidal quartic, by $\cC_1$ its bitangent line at the points $\{P,Q\}$, and by $\cC_2$ the conic passing through the three cusps $\{R_1,R_2,R_3\}$ and tangent to $\cC_1$ and $\cC_4$ at $P$. Note that the curve $\cC=\cC_4\cup\cC_2$ is of quasi fiber--type as shown by the existence of the following quasi-toric polynomial identity:
\begin{equation}
\label{eq:quasi-toric}
f_3^2=f_1^2f_4-4f_2^3,
\end{equation}
where $f_1=x+y+z$, 
$f_2=(xy+\xi yz+\xi^2zx)$,
$f_3=x^2y+y^2z+z^2x-x^2z-z^2y-y^2x-3\xi(\xi-1)xyz$,
$f_4=x^2y^2+y^2z^2+z^2x^2-2xyz(x+y+z)$, 
such that $\cC_i=\{f_i=0\}$, and $\xi^2+\xi+1=0$.
Any singular point is $p$-transversal for $\cC$ unless $p=2$, hence the curve $\cC$ is completely $p$-reductive w.r.t. any non-zero $\w\in A^1_{W_\cC}$ except for $p=2$. Hence, by Proposition~\ref{prop:completelypreductive}, the only possible non-trivial cohomology jumping loci in positive characteristic occurs for $p=2$.
Assume $\kk=\ZZ_2$ and consider $X$ the affine complement of $\cC$ and use the index order to mark a preferred branch at each singular point in $S=\{P,R_1,R_2,R_3\}$.  Denote by $W=W_\cC$ the weak combinatorics of the curve $\cC$.
Note that
$A^0_W=\kk$, $A^1_W=\langle \sigma_2,\sigma_4\rangle_\kk$, and
$A^2_W=\langle\bar\psi_P^4,\bar\psi_{R_i}^4,
\bar\psi_{\infty}^2,\bar\psi_{\infty}^4\rangle_\kk.$
It is easy to check that $\sigma_2\wedge\sigma_4=0$.
According to Theorem~\ref{thm:fibered}, and the relation \eqref{eq:quasi-toric}, one can consider
$$
\omega = \alpha\sigma_2+3\beta\sigma_4=\alpha\sigma_2+\beta\sigma_4.
$$ 
This implies that $\omega\in \cR_1(W,\kk)$.
\end{example}

\begin{example}
\label{ex:TCQuartic2}
Consider the Example~\ref{ex:TCQuartic1} where now $\cC$ is the union of the tricuspidal
quartic $\cC_4$, its bitangent $\cC_1$, and $\cC_3=\{f_3=0\}$ the nodal cubic passing tangent to $\cC_4$
at the cusps $\{R_1,R_2,R_3\}$, having a node at $P$ and one of its branches being tangent to both the line $\cC_1$ and the quartic~$\cC_4$. Any cusp is $p$-transversal for $\cC_3$ and $\cC_4$ unless $p=3$. After a $p$-reduction ($p\neq 3$) at $R_i$, the singular point $P$ becomes $p$-transversal unless $p=2,3$. Hence, $\cC$ is completely $p$-reductive w.r.t. any non-coordinate
$\w\in A^1_{W_\cC}$ for $p\neq 2,3$. Consider $\cC_0$ a transversal line to $\cC=\cC_1\cup\cC_4\cup\cC_3$ and use this order
to mark a preferred branch at each singular point in $S=\{P,Q,R_1,R_2,R_3\}$. Denote by $W=W_\cC$ the weak combinatorics of the curve $\cC$. Note that
$A^0_W=\kk$, $A^1_W=\langle \sigma_1,\sigma_3,\sigma_4\rangle_\kk$,
$$A^2_W=\langle\bar\psi_P^4,\bar\psi_P^{\delta_3},\bar\psi_P^{\delta'_3},
\bar\psi_Q^4,\bar\psi_{R_1}^3,\bar\psi_{R_2}^3,\bar\psi_{R_3}^3,
\lambda_2\bar\psi_{\infty}^4,\lambda_3\bar\psi_{\infty}^3, \bar\psi_{\infty}^1\rangle_\kk,$$
where $\delta_3$ (resp. $\delta'_3$) is the transversal branch (resp. tangent branch) of $\cC_3$ at $P$ and
$$\lambda_q:=\begin{cases} 1 & \textrm{ if } \textrm{char}(\kk)=q\\ 0 & \textrm{ otherwise.}\end{cases}$$
Using~\eqref{eq:product2}, one can check that
$$
\array{rcl}
\sigma_1\wedge\sigma_4&=&(2,0,0,2,0,0,0,-\lambda_2,0)=(2,0,0,2,0,0,0,\lambda_2,0),\\
\sigma_1\wedge\sigma_3&=&(0,1,2,0,0,0,0,0,-\lambda_3)=(0,1,2,0,0,0,0,0,2\lambda_3),\\
\sigma_4\wedge\sigma_3&=&(-3,1,2,0,3,3,3,3\lambda_2,-4\lambda_3)=(-3,1,2,0,3,3,3,\lambda_2,2\lambda_3).
\endarray
$$
As above, according with Theorem~\ref{thm:fibered} and the relation $f_1^2f_4-4f_2^3=f_3^2$, we consider
$\omega=2\beta_1\sigma_1+\beta_4\sigma_4+2\beta_3\sigma_3\in A^1_W$ with $\beta_1=\beta_4$. The matrix describing $\cdot\wedge\omega$,
is given by
$$
\left(\begin{array}{rrr}
-2 \, \beta_4 & 2 \, \beta_1 + 3 \, \beta_3 & -3 \, \beta_4 \\
-\beta_3 & -\beta_3 & \beta_1 + \beta_4 \\
-2 \, \beta_3 & -2 \, \beta_3 & 2 \, \beta_1 + 2 \, \beta_4 \\
-2 \, \beta_4 & 2 \, \beta_1 & 0 \\
0 & -3 \, \beta_3 & 3 \, \beta_4 \\
0 & -3 \, \beta_3 & 3 \, \beta_4 \\
0 & -3 \, \beta_3 & 3 \, \beta_4 \\
\beta_4 \lambda_{2} & \beta_1 \lambda_{2} + \beta_3 \lambda_{2} & \beta_4 \lambda_{2} \\
\beta_3 \lambda_{3} & \beta_4 \lambda_{3} & 2\beta_1 \lambda_{3} + 2 \, \beta_4 \lambda_{3}
\end{array}\right)
\simmaps{}{\beta_1=\beta_4}
\left(\begin{array}{rrr}
-\beta_3 & -\beta_3 & 2\beta_1 \\
-2\beta_3 & -2\beta_3 & 4\beta_1 \\
0 & -3\beta_3 & 3\beta_1 \\
\beta_1\lambda_{2} & (\beta_1+\beta_3)\lambda_2 & \beta_1\lambda_{2} \\
\beta_3 \lambda_{3} & \beta_3 \lambda_{3} & 4\beta_1\lambda_{3}
\end{array}\right).
$$
Hence, there is non-trivial cohomology jumping loci only in characteristic 3. In fact, 
$\omega=2\sigma_1+\sigma_4+2\sigma_3\in\cR_1(W,\kk)$ if and only if $\textrm{char}(\kk)=3$.

\end{example}

\subsubsection{Quasi fiber--type curves and twisted Alexander polynomials}

 Recall that, if $\cC=\cC_1\cup\cC_2\cup \ldots \cup\cC_{r}$ is a quasi fiber-type curve, $\cC_i:\{F_i=0\}, \; F_i=\Pi_{j}F_{i,j}^{m_{i,j}}, m_{i,j}>0$ denotes the decomposition of the quasi fiber $\cC_i$ into irreducible components.\\

In this context, \cite[Theorems 3.2 - 3.4]{DPS} can be generalized as follows. 
\begin{thm}
\label{thm:roots}
Assume $\cC=\cC_1\cup\ldots\cup\cC_r$ is a quasi fiber-type curve of index $k$ following the notation above. Then, the primitive roots of order $N$ are zeros of the $(\nu_{i,j})$-twisted Alexander polynomial of multiplicity $b_N$ for $\gcd(\nu_{i,j})=1$, satisfying:
\begin{enumerate}
\item\label{mainthm:cond1}
$\frac{\nu_{i,j}}{m_{i,j}}=\frac{p_i}{q_i}$, $(p_i,q_i)=1$ independent of $j$, 
\item\label{mainthm:cond2}
$N\mid\gcd\left(
\frac{q p_i}{q_i}m_i, i=0,1,...,r\right)$, where $q:={\rm lcm}(q_i)$, $m_0:=k$, and 
$\frac{p_0}{q_0}:=\sum_{i=1}^r\frac{p_i}{q_i}$,
\item\label{mainthm:cond3} 
$b_N\geq\ell_N+r_0-2$, where 
$\ell_N=\#\{i=0,1,...,r\mid q\frac{p_i}{q_i}\neq 0 \mod N, m_i\neq 0\}$ and $r_0:=\#\{i=0,1,...,r\mid m_i=0\}$. 
\end{enumerate}
\end{thm}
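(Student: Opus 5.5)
The plan is to obtain the lower bound from the orbifold surjection \eqref{eq:proper-surj} and the standard computation of twisted cohomology of orbifold fundamental groups, not from the Combinatorial Aomoto complex. The pencil $[G_1:\dots:G_r:G^k]$ gives a map $f:\tilde X\to\PP^1$. The fibers over the $r$ points $G_i=0$ have orbifold multiplicity $m_i$, and the fiber over $G^k=0$, which is the extra point labelled $0$, has multiplicity $m_0=k$. So $f$ induces $f_*:\pi_1(\tilde X)\surj\pi_1^{\orb}(\PP^1_{\bar m'})$, where a point is removed (multiplicity ``$0$'') whenever $m_i=0$. For the surjectivity and the lifting of characters I will assume \cite{BCM}. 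By \eqref{eq:local_betti_twist}, $b_N=\dim H^1(\tilde X;\CC_{\rho^{\bar N/N}})$. The standard fact is that if a character $\rho$ of $\pi_1(\tilde X)$ factors as $\tau\circ f_*$ with $f_*$ surjective and with connected generic fiber (which holds since the pencil is primitive), then $f^*:H^1(\pi_1^{\orb};\CC_\tau)\to H^1(\tilde X;\CC_\rho)$ is injective. Hence $b_N\ge h^1(\pi_1^{\orb}(\PP^1_{\bar m'});\CC_\tau)$.

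The first step is to check that the character factors. Write $\gamma_i$ for the orbifold meridian around the $i$-th special point, with $\gamma_i^{m_i}=1$ when $m_i\neq0$ and $\prod_{i=0}^r\gamma_i=1$. A meridian $\sigma_{i,j}$ of $\cC_{i,j}$ maps to $\gamma_i^{m_{i,j}}$. The twisted character sends $\sigma_{i,j}\mapsto\zeta^{\nu_{i,j}}$, where $\zeta=\xi_{\bar N}^{\bar N/N}$ is a primitive $N$-th root of unity. Condition \eqref{mainthm:cond1}, $\nu_{i,j}=m_{i,j}p_i/q_i$, together with $\gcd(m_{i,j},j)=1$, forces $q_i\mid m_{i,j}$ for all $j$. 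Setting $\tau(\gamma_i)=\zeta^{qp_i/q_i}$ (after rescaling $\zeta$ by the exponent $q$, which is harmless) makes the character factor. The relations must then be checked. The condition $\tau(\gamma_i)^{m_i}=1$ is exactly the divisibility $N\mid \frac{qp_i}{q_i}m_i$ in \eqref{mainthm:cond2}. The product relation needs $\tau(\gamma_0)=\prod_{i\ge1}\tau(\gamma_i)^{-1}$, which is where the definition $p_0/q_0=\sum p_i/q_i$ enters, with a sign convention chosen for $\gamma_0$; the case $i=0$ of \eqref{mainthm:cond2} handles the index fiber $G^k$. I expect most of the bookkeeping to lie in this step: making the identification $\sigma_{i,j}\mapsto\gamma_i^{m_{i,j}}$ consistent with $\gcd(\bar\nu)=1$, and matching $\bar N$ against $N$.

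The second step is the orbifold computation. The group $\pi_1^{\orb}$ has $r+1$ generators and the $r+1$ relations above, one of which is omitted for each $m_i=0$. Its Euler characteristic is $\chi^{\orb}=2-(r+1)+\sum_{m_i\neq0}1/m_i$. For a nontrivial $\tau$, Fox calculus, or equivalently the Euler characteristic of the orbifold with local system, gives $h^1=-\chi$ of the twisted complex, which equals
\[
h^1(\pi_1^{\orb};\CC_\tau)= (r+1)-2-\#\{i: m_i\neq0,\ \tau(\gamma_i)=1\} = \ell_N+r_0-2 .
\]
Here the points with $m_i\neq0$ and $\tau(\gamma_i)\neq1$ contribute fully, those with $\tau(\gamma_i)=1$ and $m_i\neq0$ drop out, and the punctures ($m_i=0$) always contribute. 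By definition $\tau(\gamma_i)\ne1$ iff $qp_i/q_i\not\equiv0\bmod N$, which matches $\ell_N$. The count comes from the standard presentation-complex computation: $h^1-h^0+h^2$ equals the number of generators minus the effective relations, and $h^0=0$ for $\tau\ne1$.

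The main obstacle is the injectivity of $f^*$ on $H^1$ with twisted coefficients when the fibration has multiple fibers. To handle it, I would use the five-term exact sequence of the group extension $1\to K\to\pi_1(\tilde X)\to\pi_1^{\orb}\to1$. The restriction of $\rho$ to $K$ is trivial, so $H^1(\pi_1^{\orb};H^0(K;\CC_\rho))=H^1(\pi_1^{\orb};\CC_\tau)$ injects into $H^1(\tilde X;\CC_\rho)$. The paper's point that the bound is only $\ge$ reflects the fact that $H^1(K)^{\pi}$ may contribute more.
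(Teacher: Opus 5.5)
Your proposal is correct and takes the same route as the paper: pull back a character of $\pi_1^{\orb}(\PP^1_{\bar m'})$ along the pencil surjection $f_*$, use that $f^*$ is injective on $H^1$, and evaluate the orbifold twisted $H^1$ as $\ell_N+r_0-2$. Where the paper simply says ``pull-back'' and cites the orbifold formula, you justify injectivity by inflation--restriction and derive the count by Fox calculus. One remark on your bookkeeping step: under the standing assumption $\gcd(m_{i,j},j)=1$, condition (1) forces $q_i=1$ (since $q_i\mid m_{i,j}$ for every $j$), so $q=1$ and no rescaling of $\zeta$ is needed; without that assumption, choosing a primitive $N$-th root $\eta$ with $\eta^q=\zeta$ requires $\gcd(q,N)=1$, which holds in the paper's later applications and which the paper's own proof also glosses over.
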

\begin{proof}
We present an alternative proof of~\cite[Theorems 3.2 -- 3.4]{DPS} that also applies in this general context.
As discussed in~\eqref{eq:proper-surj}, there exists a morphism 
\begin{equation}
\label{eq:proper-surj2}
f_*:\pi_1(\tilde X)\surj \pi^{\orb}_1(\PP^1_{\bar{m}'})=
\langle \gamma_0,\dots,\gamma_r: \gamma_i^{m_i}= \gamma_0\gamma_1\cdots\gamma_r=1\rangle=
\frac{\ZZ_{m_1}\gamma_1* \cdots*\ZZ_{m_r}\gamma_r}{(\gamma_1\cdots\gamma_r)^{m_0}}
\end{equation}
where $\bar m'=(m_0=k,m_1,\ldots,m_r)$. 
Moreover, if $\mu_{i,j}$ denotes a meridian around the irreducible component $\cC_{i,j}$, then $f_*(\mu_{i,j})=\gamma_{i}^{m_{i,j}}$. Assume there exists a character $\rho:\pi^{\orb}_1(\PP^1_{\bar{m}'})\to\CC^*$, $\rho(\gamma_i)=\xi^{\bar\nu_i}$, whose associated rank-one local system is such that $H^1(\pi^{\orb}_1(\PP^1_{\bar{m}'});\CC_\rho)=d>1$. This implies that $\xi$ is a root of the $(\bar\nu_i)$-twisted Alexander polynomial of $\pi^{\orb}_1(\PP^1_{\bar{m}'})$. Using its pull-back, one has $H^1(\tilde X;\CC_{\rho f_*})\geq d$, and hence the root of unity $\xi\in\CC^*$ is a root of multiplicity at least $d$ of the $(\nu_{i,j})$-twisted Alexander polynomial of $\tilde X$, where $\nu_{i,j}=\frac{m_{i,j}\bar\nu_i}{\bar\nu}$,  $\bar\nu:=\gcd(m_{i,j}\bar\nu_i)$. Define $r_0:=\#\{i=0,1, \ldots ,r\mid m_i=0\}$ and $\ell(\rho)=\#\{i=0,1, \ldots ,r\mid \xi^{\bar\nu_i}\neq 1\}$. By~\cite{BCM}, $b_N=\ell(\rho)+r_0-2$. Note that $\rho$ defines a character if $\xi^{m_i\bar\nu_i}=\xi^{m_0\sum \bar\nu_i}=1$. Condition~\eqref{mainthm:cond1} follows from $\bar\nu_{i}=\bar\nu\frac{\nu_{i,j}}{m_{i,j}}=q\frac{p_i}{q_i}\in\ZZ_{>0}$, where $(p_i,q_i)=1$, which implies $q_i\mid q$. Condition~\eqref{mainthm:cond2} follows from the fact that the order of $\xi$ is $\gcd(\bar\nu_im_i,k\sum\bar\nu_i)$ and $\bar\nu_{i}=q\frac{p_i}{q_i}$. 
Condition~\eqref{mainthm:cond3} follows from $b_N=\ell(\rho)+r_0-2$ since $\ell(\rho)=\ell_N$ only depends on~$N$ as long as $\rho$ is a primitive $N$-th root of unity. 
\end{proof}

\begin{corollary}\label{cor:N|r}
Let $\cC= \cC_1 \cup \ldots \cup \cC_r$ be a reduced fiber--type curve. Then, for any list of integers $\bar \nu=(\nu_{i,j})$, $\gcd \bar\nu=1$, where $\nu_{i,j}=\nu_i$ independently of $j$, the $\bar\nu$-twisted Alexander polynomial of $\cC$ is a multiple of $(t^\nu-1)^{r-2}$, for $\nu:=\sum_{i=1}^r \nu_i$.
\end{corollary}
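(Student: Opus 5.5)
We deduce Corollary~\ref{cor:N|r} from Theorem~\ref{thm:roots}, specialised to a reduced fiber--type curve. Here each fiber $\cC_i$ is reduced, so $m_{i,j}=1$ for all $i,j$. Fiber--type also means $H_i$ is constant, so $m_i=0$ for $i=1,\dots,r$. The index is $k=m_0=1$, since no extra multiple fiber is assumed.

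\textbf{Step 1: the twisting satisfies condition~\eqref{mainthm:cond1}.} For the twisting $\nu_{i,j}=\nu_i$ we have $\nu_{i,j}/m_{i,j}=\nu_i$, which is independent of $j$. So condition~\eqref{mainthm:cond1} holds with $p_i=\nu_i$ and $q_i=1$, hence $q=1$. The auxiliary index-$0$ data are $p_0/q_0=\sum_i\nu_i=\nu$ and $m_0=1$.

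\textbf{Step 2: which $N$ are allowed.} Condition~\eqref{mainthm:cond2} reads $N\mid\gcd(\nu_i m_i, i=0,\dots,r)$. Since $m_i=0$ for $i\geq 1$, these entries are $0$ and impose nothing, so the condition reduces to $N\mid \nu m_0=\nu$. Hence every $N\mid\nu$ is admissible.

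\textbf{Step 3: the multiplicity bound.} In condition~\eqref{mainthm:cond3}, $r_0=\#\{i\mid m_i=0\}=r$, since $m_0=1\neq0$. The term $\ell_N$ counts those $i$ with $m_i\neq 0$ and $p_i\not\equiv 0 \bmod N$. The only such index is possibly $i=0$, and there $p_0=\nu\equiv 0\bmod N$, so $\ell_N=0$. This gives $b_N\geq r-2$ for every $N\mid\nu$, including $N=1$ provided the proof of Theorem~\ref{thm:roots} covers the trivial character. As a check, the pencil map $\tilde X\to\PP^1\setminus\{r \text{ points}\}$ pulls back $b_1(F_{r-1})=r-1\geq r-2$.

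\textbf{Step 4: assembling the polynomial.} Every primitive $N$-th root of unity with $N\mid\nu$ is a root of multiplicity at least $r-2$. The cyclotomic factors $\phi_N$ with $N\mid\nu$ are pairwise coprime. Therefore $\prod_{N\mid\nu}\phi_N^{r-2}=(t^\nu-1)^{r-2}$ divides $\Delta_{(\cC,\bar\nu)}$.

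\textbf{Main obstacle.} The delicate point is the bookkeeping in Theorem~\ref{thm:roots}, namely the treatment of $i=0$ and of the trivial root $N=1$. One must also check that the hypothesis $\gcd\bar\nu=1$ matches the normalisation $\bar\nu=\gcd(m_{i,j}\bar\nu_i)=1$ used there. Both are fine here because $m_{i,j}=1$, and because the orbifold group reduces to the free group $\ZZ^{*(r-1)}$, for which $b_1$ of the relevant character is exactly $r-2$ (or $r-1$ for $N=1$).
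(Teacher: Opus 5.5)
Your proposal is correct and matches the paper's proof: both specialize Theorem~\ref{thm:roots} with $m_0=1$, $m_i=0$ for $i\geq 1$, $m_{i,j}=1$, $p_i=\nu_i$, $q_i=q=1$ and $p_0=\nu$, so that every $N\mid\nu$ is admissible with $r_0=r$, $\ell_N=0$ and $b_N\geq r-2$. Your explicit treatment of the trivial root $N=1$ and the assembly $\prod_{N\mid\nu}\phi_N=t^\nu-1$ spell out steps the paper leaves implicit.
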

\begin{proof}
Take, in Theorem \ref{thm:roots},
$m_0=1, \; m_i=0\; (i\geq 1), \; m_{i,j}=1$ and $\nu_{i,j}=\nu_i$, independent of $j$, for any fixed $i$.
 In this case one has $p_0=\sum_{i=1}^r \nu_i$, $q_0=1$, $p_i= \nu_i, \; q_i=q=1$ ($i\geq 1$), $\frac{p_0}{q_0}m_0=\sum_{i=1}^r \nu_i$, $N\mid \sum_{i=1}^r \nu_i$, $r_0=r$, $\ell_N=0$, and $b_N\geq r-2$. 
\end{proof}

\begin{remark}
\label{rem:generalizations}
In particular, the classical case ($\nu_{i,j}=1$) described in \cite[Corollary 3.2]{DPS} can be recovered from Corollary \ref{cor:N|r}.

One can recover \cite[Theorem 3.3]{DPS} from Theorem \ref{thm:roots} for $m_0=k>1$, $m_i=0$ ($i\geq 1$), $m_{i,j}=1$, and $\nu_{i,j}=1$. In this case one has $p_0=r$, $q_0=1$, $p_i=q_i=q=1$ ($i\geq 1$), $N=\frac{p_0}{q_0}m_0=kr$, $r_0=r$, $\ell_N=1$, and $b_N\geq r-1$. 

Finally, one recovers \cite[Theorem 3.4]{DPS} from Theorem \ref{thm:roots} when $m_0=1$, $m_i=0$ ($i\geq 1$), $m_{1,j}=m$, $m_{i,j}=1$ ($i>1$), and $\nu_{i,j}=1$. In this case, one has $p_i=1$ ($i\geq 1$), $q_1=q=m$, $q_i=1$ ($i>1$), $p_0=m(r-1)+1$, $q_0=m$, $N=\frac{p_0}{q_0}m_0=m(r-1)+1$, $r_0=r-1$, $\ell_N=1$, and $b_N\geq r-2$. 
\end{remark}

\begin{remark}\label{rem:thm8.3[PS3]}
Since a reduced $r$-multinet structure on a projective line arrangement $\mathcal{A}$ implies the existence of a reduced pencil of curves of degree $d=|\mathcal{A}|/r$ such that the arrangement is the union of $r$ completely reducible fibers of the pencil,  Corollary \ref{cor:N|r} implies \cite[Theorem 8.3]{PS3}.
\end{remark}

 The above result allows us to prove the existence of roots of twisted Alexander polynomials 
which are not necessarily of prime order, as we can see in the next example. 
\begin{example}
\label{ex:revisited_TCQuartic1}
Let us apply Theorem \ref{thm:roots} to the fiber-type curve from Example \ref{ex:TCQuartic2}, $\cC = \cC_1 \cup \cC_4 \cup \cC_3$ with $\cC_1: f_1^2=0, \; \cC_4: f_4=0, \; \cC_3: f_3^2=0$. We see $\cC$ as the union of two fibers 
$\bar \cC_1:= \cC_1 \cup \cC_4, \bar \cC_2:= \cC_3$ in an index $3$ pencil of sextics. \\
For $m_0=3$, $m_1=m_2=0$, $m_{1,1}=2$, $m_{1,2}=1$, $m_{2,1} = 2$, $m_{i,j} = \nu_{i,j}$, we get $p_1=q_1=p_2=q_2=1$, $p_0=2$, $q_0=1$, $q=1$, and $r_0=2$. For $N=3,6$, one obtains $\ell_N=1$. It follows $b_N \geq 1$, which means that any primitive root of unity of order $6$ or $3$ are roots of the $\varphi$-twisted Alexander polynomial, $\varphi=(m_{i,j})=(2,1,2)$.
Moreover, since $\bar \cC_1$ (resp. $\bar \cC_2$) is completely $3$-reductive w.r.t. $\w_1$ (resp. $\w_2$), by Theorem~\ref{thm:p.index} 
$h^1(A^*_{W_\cC,\ZZ_3},\cdot\wedge\w_\varphi)= 1$, where $\w_{\varphi}=2\sigma_{1,1}+\sigma_{1,2}+2\sigma_{2,1}$. Then, by Theorem~\ref{thm:inequality}, 
$b_3 \leq 1$. It follows $b_3 =1$, i.e. the multiplicity of any primitive root of unity of order $3$ has multiplicity precisely $1$, as a root of the $\varphi$-twisted Alexander polynomial.
However, for $N=2$, one obtains $\ell_2=0$ and hence $b_2\geq0$ does not give any condition. Finally, since $h^1(A_{W_{\cC},\ZZ_2}^*,\cdot\wedge\w_{\varphi})=0$ as shown in Example~\ref{ex:TCQuartic2}, we have in fact~$b_2=0$.
\end{example}

\section{Applications}
\label{sec:applications}

To state the next results consider $\cC_\varphi$ a fiber--type curve obtained as the union of $r$ fibers $\cC_1, \ldots, \cC_r$ in a pencil of degree $d$ curves with $\cC_i= \{ G_i=0\}$.
Let $G_i=\prod_j F_{i,j}^{m_{i,j}}$ with $m_{i,j}>0$ be the decomposition into irreducible components. 
Denote by $\bar m$ the set of multiplicities $(m_{i,j})_{i,j}$.
Moreover, if $m_{i,j}=1$ for all $i,j$, then we call $\cC$ {\it reduced} fiber--type curve.
Denote by $\sigma_{i,j}$ a meridian around the component $\cC_{i,j}= \{F_{i,j}=0\}$. 
If $P \in \Sing(\cC)$ is a base point, then by \eqref{eq:mu_delta} for any branch $\delta$ of $\cC_i$ at $P$ one has that
$$\mu_P(\delta,\cC_j)=\sum_k m_{j,k}\mu_P(\delta,\cC_{j,k})=:\mu_\delta$$ is independent of $j\neq i$. If furthermore $\cC$ is a pencil-type arrangement associated with a Halphen pencil of index $k$ (see Remark \ref{rem:halphen_pencil}), then $k \mid \mu_\delta$.

Let us illustrate how Theorems \ref{thm:inequality}, \ref{thm:p.index} and \ref{thm:roots} can be used in concurrence to compute the multiplicities of certain roots of twisted Alexander polynomials associated to particular fiber--type curves. 

\begin{theorem}
\label{thm:Milnor_reduced}
Let $\cC= \cC_1 \cup \ldots \cup \cC_r$ be a fiber--type curve and $p$ a prime factor of $r$. Assume 
\begin{itemize}
    \item 
$\bar m=(m_{i,j})$ is such that $\gcd(\bar m)=1$ and $\gcd(p,m_{i,j})=1$ for all $i, j$, 
    \item 
$\gcd(p,\mu_{\delta})=1$ for some branch $\delta$ through a base point, and 
    \item 
$\cC_i$ is completely $p$-reductive w.r.t. $\w_i=\sum_{j} m_{i,j}\s_{i,j}$ for every $i$.
\end{itemize}
Then $b_p=r-2$ is the multiplicity of the cyclotomic polinomial $\phi_p$ for the $\bar m$-twisted Alexander polynomial $\Delta_{(\cC,\bar m)}$
of $\cC$. 
\end{theorem}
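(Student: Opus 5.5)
The plan is to sandwich $b_p$ between a lower bound from Theorem~\ref{thm:roots} and an upper bound from Theorem~\ref{thm:inequality} combined with Theorem~\ref{thm:p.index}, and to show that both bounds equal $r-2$.

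\emph{Lower bound.} We apply Theorem~\ref{thm:roots} to $\cC$, viewed as a fiber--type curve. Its data are $m_0=1$ (index $1$), $m_i=0$ for $i\geq 1$, and twisting $\nu_{i,j}=m_{i,j}$. Then $p_i/q_i=1$ for every $i\ge1$, so $q=1$, and $p_0/q_0=r$. Condition~\eqref{mainthm:cond2} becomes $N\mid \gcd(r\cdot m_0, 0,\dots,0)=r$, so $N=p$ is allowed because $p\mid r$. We have $r_0=r$. In $\ell_N$ only the index $i=0$ can contribute, since $m_i\neq 0$ there, but $q\,p_0/q_0=r\equiv 0 \bmod p$, so $\ell_p=0$. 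Hence $b_p\ge r-2$. This is exactly Corollary~\ref{cor:N|r} with $\nu_i=1$ read with the weights $m_{i,j}$, and the bookkeeping is routine.

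\emph{Upper bound.} Let $\kk=\ZZ_p$ and $\w_\varphi=\sum_{i,j}m_{i,j}\sigma_{i,j}$. This is the cochain $\w_\rho$ for the character $\sigma_{i,j}\mapsto \xi_p^{m_{i,j}}$. By \eqref{eq:local_betti_twist}, Remark~\ref{rem:proj/affine} and Theorem~\ref{thm:inequality}, we get $b_p\le h^1(A^*_{W_\cC,\kk},\cdot\curlywedge\w_\varphi)$. One point to check is that the character appearing in \eqref{eq:local_betti_twist}, namely $\rho^{\bar N/p}$ with $\bar N=\sum d_{i,j}m_{i,j}=rd$, sends $\sigma_{i,j}$ to a primitive $p$-th root raised to $m_{i,j}$. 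This holds, so the corresponding $\kk$-cochain is indeed $\w_\varphi$.

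\emph{Computing the upper bound.} We use Theorem~\ref{thm:p.index} with $\alpha_i=1$ for all $i$. Its hypotheses hold for the following reasons.
\begin{itemize}
\item $m_i=0$, since the curve is fiber--type.
\item $\w_\varphi$ is non-coordinate because $p\nmid m_{i,j}$.
\item $\w_\varphi$ is associated with $\cC_\varphi$.
\item $\sum_i\alpha_i=r=0\in\kk$.
\item Each $\cC_i$ is completely $p$-reductive with respect to $\w_i$ by assumption.
\end{itemize}
Since there is a base point branch $\delta$ with $p\nmid\mu_\delta$, we are in the second case of Theorem~\ref{thm:p.index}, which gives $h^1=r-2$. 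Together with the lower bound this yields $b_p=r-2$.

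\emph{Main obstacle.} I expect the hardest part to be justifying the ``otherwise'' case of Theorem~\ref{thm:p.index}. There, the kernel is cut down from $\cR_{\varphi,\kk}$ (dimension $r$) to the forms with $\sum\beta_i=0$, and after subtracting the image of $A^0$ this leaves dimension $r-2$. To make this precise, I would take $\w'=\sum m_{i,j}\beta_i\sigma_{i,j}$ in the kernel, which lies in $\cR_{\varphi,\kk}$ by Lemma~\ref{prop:fiber_p_reductive}. The resonance condition~\eqref{eq:det} at the base point branch $\delta$ on $\cC_i$ reads $\mu_\delta\bigl(\alpha_i\sum_s\beta_s-\beta_i\sum_s\alpha_s\bigr)=0$. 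Since $\sum_s\alpha_s=0$, $\alpha_i=1$ and $\mu_\delta\in\kk^*$, this forces $\sum_s\beta_s=0$. That gives a kernel of dimension $r-1$; quotienting by $\kk\w_\varphi$ gives $r-2$.
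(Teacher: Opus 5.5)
Your proposal is correct and follows the paper's own proof. The upper bound $b_p\le r-2$ comes from Theorem~\ref{thm:p.index} via Theorem~\ref{thm:inequality}; you are in its ``otherwise'' case because some base-point branch has $p\nmid\mu_\delta$. The lower bound $b_p\ge r-2$ comes from Theorem~\ref{thm:roots} with $\nu_{i,j}=m_{i,j}$, $m_0=1$, $m_i=0$ and $p_0/q_0=r$, exactly as you set it up.

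Your extra checks only fill in details the paper leaves implicit: the identification of $\rho^{\bar N/p}$ with $\w_\varphi$, $\ell_p=0$, $r_0=r$, and the base-point resonance computation. In that last computation you dropped a unit factor $m_{i,j}$, which is harmless.
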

\begin{proof}
Denote by $b_p$ the multiplicity of a root of the Alexander polynomial $\Delta_{(\cC,\bar m)}$ which is a primitive root of unity of order $p$. 
Theorem \ref{thm:p.index}, via Theorem \ref{thm:inequality}, shows us that $b_p \leq r-2$.
For the inequality $b_p\geq r-2$, one can apply Theorem~\ref{thm:roots} for $\nu_{i,j}=m_{i,j}$ as follows. Note that $\cC$ is a special quasi fiber--type curve where $m_0=1$, $m_i=0$ for $i>0$, and hence $\frac{p_0}{q_0}=r$. In that case, for any $N \mid \frac{p_0}{q_0}m_0=r$, in particular for any $p$ prime factor of $r$, $b_p\geq r-2$, which ends the proof.
\end{proof}

We also have a version of the previous result in the special case when the curve contains a multiple member of a pencil. 

\begin{theorem}
\label{thm:Milnor_nonreduced}
Let $\cC= \cC_1 \cup \ldots \cup \cC_{r+1}$ be a fiber--type curve such that $\cC_1$ is a multiple fiber of multiplicity $k$ and $p$ a prime factor of $1+kr$. Assume 
\begin{itemize}
    \item 
$\bar \nu=(\nu_{i,j})$ is defined as $\nu_{i,j}:=\begin{cases}\frac{m_{1,j}}{k} & \textrm{ for } i=1\\ m_{i,j} & \textrm{ otherwise}\end{cases}$ such that $\gcd(\bar \nu)=1$ and $\gcd(p,\nu_{i,j})=1$, for all $i, j$, 
    \item 
$\gcd(p,\mu_{\delta})=1$ for some branch $\delta$ through a base point, and 
    \item 
$\cC_i$ is completely $p$-reductive w.r.t. $\w_i=\sum_{j} \nu_{i,j}\s_{i,j}$ for every $i$.
\end{itemize}
Then $b_p=r-1$ is the multiplicity of the cyclotomic polinomial $\phi_p$ for the $\bar \nu$-twisted Alexander polynomial $\Delta_{(\cC,\bar \nu)}$
of $\cC$. 
\end{theorem}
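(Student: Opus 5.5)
We mirror the proof of Theorem~\ref{thm:Milnor_reduced}: the upper bound $b_p\le r-1$ comes from the Combinatorial Aomoto complex via Theorems~\ref{thm:inequality} and~\ref{thm:p.index}, and the lower bound $b_p\ge r-1$ comes from the orbifold pencil map via Theorem~\ref{thm:roots}.

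The first step is to recast $\cC$ as a quasi fiber--type curve. The pencil contains the multiple member $G_1=G^k$, which is a fiber of $\cC$ itself, together with the reduced-structure members $\cC_2,\dots,\cC_{r+1}$. We regard $\cC$ as the union of $r+1$ quasi fibers. The irreducible components of $\cC_1$ are those of $G$, with multiplicities $m_{1,j}/k=\nu_{1,j}$; the remaining fibers have multiplicities $\nu_{i,j}=m_{i,j}$. The associated 1-form is then $\w_{\bar\nu}=\sum_{i,j}\nu_{i,j}\sigma_{i,j}$, which is associated with this decomposition with $\alpha_i=1$ for all $i$.

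For the lower bound we apply Theorem~\ref{thm:roots}. The orbifold is $\PP^1$ with an orbifold point of order $m_1=k$ at the fiber $G$, and $m_i=0$ for the other $r$ fibers. Since $G$ is itself part of the curve, the natural choice is to take the "$\gamma_0$'' point as a reduced fiber with $m_0=1$. Choosing $\bar\nu$ as in the statement, the ratios are $p_1/q_1=1/k$ and $p_i/q_i=1$, so $q=k$. The quantities entering condition~\eqref{mainthm:cond2} are then $q\frac{p_0}{q_0}m_0=1+kr$ (with $p_0/q_0=r+1/k$) and $q\frac{p_1}{q_1}m_1=k$. These have to be arranged so that $N=p\mid 1+kr$ is admissible, exactly as in Remark~\ref{rem:generalizations}, which recovers \cite[Theorem 3.4]{DPS}. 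Here $r_0=r$ (the zero-multiplicity fibers) and $\ell_p=1$ (the point of order $k$, since $p\nmid k$ because $p\mid 1+kr$). This should give $b_p\ge r+1-2=r-1$.

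The bookkeeping of which point plays the role of $\gamma_0$ is the step I expect to be delicate. I would adjust it by following the $m_{1,j}=m$ case of Remark~\ref{rem:generalizations} verbatim, with $r$ replaced by $r+1$.

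For the upper bound we work over $\kk=\ZZ_p$. Every quasi fiber has $m_i=0$ in $\kk$; for the order-$k$ point the relevant fact is that $k\cdot\nu$-weights give $\mu_\delta$ computed with $G$. We have $\sum_i\alpha_i$ corresponding to $\frac{1}{k}\cdot k+\dots$. More precisely, with the weights the condition becomes $1+kr\equiv0$, i.e. $\sum\alpha_i=0$ after rescaling $\w$ on $\cC_1$ consistently. The form $\w_{\bar\nu}$ is non-coordinate since $p\nmid\nu_{i,j}$, and each $\cC_i$ is completely $p$-reductive w.r.t. $\w_i$. The condition $p\nmid\mu_\delta$ for some base branch then places us in the second case of Theorem~\ref{thm:p.index}, giving $h^1=(r+1)-2=r-1$. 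Theorem~\ref{thm:inequality} together with Remark~\ref{rem:proj/affine} and \eqref{eq:local_betti_twist} then yields $b_p\le r-1$.

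The main obstacle is checking that the base-point resonance conditions~\eqref{eq:q_fibr_mod_p} hold with $\sum\alpha_i$ replaced by the weighted sum $1+kr$. Concretely, the intersection with $\cC_1$ measured by the $\nu$-weights is $\mu_\delta/k$, so one must verify that \eqref{eq:mu_delta} still yields equal contributions.
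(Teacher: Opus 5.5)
Your upper bound is the paper's argument, once you fix the normalization. The quasi fiber--type data must use the pencil member $G_1=G^k$, i.e.\ $m_{1,j}=k\nu_{1,j}$. Then $\w=\sum_{i,j}\nu_{i,j}\sigma_{i,j}$ is associated with $\cC_\varphi$ with $\alpha_1=1/k$ and $\alpha_i=1$ for $i>1$ (note $p\nmid k$). Hence $\sum_i\alpha_i=(1+kr)/k=0\in\kk$. With this choice \eqref{eq:mu_delta} applies to $G^k$ directly, so the ``main obstacle'' in your last paragraph is not an issue. The second case of Theorem~\ref{thm:p.index} with $r+1$ quasi fibers then gives $h^1=r-1$. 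Your first phrasing (weights $\nu_{1,j}$ on $\cC_1$ and $\alpha_i=1$ for all $i$) does not fit the framework, since $G$ is not a member of the degree-$d$ pencil.

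The lower bound, however, contains a genuine error. The fiber $\cC_1=\{G=0\}$ lies in $\cC$, so its image in $\PP^1$ is a puncture, not an orbifold point of order $k$. In Theorem~\ref{thm:roots} you must therefore take $m_1=0$. This is also what your upper-bound step implicitly uses when you say every quasi fiber has $m_i=0\in\kk$. With your choice $m_1=k$, condition~\eqref{mainthm:cond2} forces $N\mid\gcd(1+kr,k)=1$, so $N=p$ is never admissible. Equivalently, $\gamma_1^k=1$ would force $\xi^k=1$. This is exactly the tension you noticed and deferred, and it does not go away.

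The multiplicity $k$ enters only through the meridians: the meridian of $\cC_{1,j}$ maps to $\gamma_1^{k\nu_{1,j}}$. That is, $k$ appears through $m_{1,j}=k\nu_{1,j}$, hence $p_1/q_1=1/k$ and $q=k$. The correct bookkeeping, as in the paper, is $m_0=1$, $m_1=\dots=m_{r+1}=0$, and $p_0/q_0=r+1/k$. Then condition~\eqref{mainthm:cond2} reads $N\mid 1+kr$, and $r_0=r+1$. Also $\ell_N=0$: the only index with $m_i\neq 0$ is $i=0$, and $q\,p_0/q_0=1+kr\equiv 0 \bmod N$. This gives $b_N\ge r-1$.

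Concretely, the target group is free of rank $r$. The character $\gamma_1\mapsto\xi$, $\gamma_i\mapsto\xi^k$ for $i\ge 2$ is well defined because $\xi^{1+kr}=1$, and it has $h^1=r-1$. It pulls back to the $\bar\nu$-character with root $\xi^k$, which is still primitive of order $N$ because $\gcd(k,1+kr)=1$.

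Your count $r_0=r$, $\ell_p=1$ reaches the same number $r-1$ only because two misassignments cancel. It matches the loose count quoted in Remark~\ref{rem:generalizations}, but as written the character it relies on does not exist.
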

\begin{proof}
Denote by $b_p$ the multiplicity of a root of the Alexander polynomial $\Delta_{(\cC,\bar \nu)}$ which is a primitive root of unity of order $p$. By Theorem~\ref{thm:inequality}, in order to prove $b_p\leq r-1$ it is enough to consider the 1-form $\w=\sum_{i,j} \nu_{i,j} \sigma_{i,j}$ and show that 
$h^1(A^*_{W_{\cC},\kk},\cdot\wedge\w)\leq r-1$. We will do this using Theorem~\ref{thm:p.index}. Note that $\w$, as defined above, is a non-coordinate 1-form associated with $\cC$, $\alpha_{1,j}=k$ and $\alpha_{i,j}=1$ for $i>1$. The condition $\sum_i\alpha_i=\frac{1}{k}+r=0\in\kk$ follows from the hypothesis $p\mid (1+kr)$. Since $\cC_i$ is completely $p$-reductive w.r.t. $\w_i$, $\gcd(p,m_{i,j})=\gcd(p,\nu_{i,j})=1$, and $\gcd(p,\mu_\delta)$ for some local branch $\delta$ at a base point, Theorem~\ref{thm:p.index} implies 
$h^1(A^*_{W_{\cC},\kk},\cdot\wedge\w)\leq (r+1)-2=r-1$.
For the inequality $b_p\geq r-1$ one can apply Theorem~\ref{thm:roots} for $\nu_{i,j}$. Note that $\cC$ is a special quasi fiber--type curve where $m_{1,j}=k\nu_{1,j}$, $m_{i,j}=\nu_{i,j}$ for $i>1$, $m_0=1$, $m_i=0$ for $i>0$, $p_1=1$, $q_1=k$, $p_i=q_i=1$ for $i>1$, $q=k$, $\frac{p_0}{q_0}=\frac{1}{k}+r$, and $N\mid (1+kr)$. In this case, $\ell_N=0$, $r_0=r+1$, and hence $b_N\geq r-1$. In particular for any $p$ prime factor of $r$, $b_p\geq r-1$, which ends the proof.
\end{proof}

\begin{remark}
\label{rem:ex_milnor}
 
In light of the the above two results, 
let us revisit the Halphen pencil type examples from \S\ref{subsec:milnor}.

Consider Examples \ref{ex:thm6.2_DPS} and \ref{ex:DPS_thm6.4}. In both these examples, for the conic-line arrangements $\mathcal{A}$, we get $b_7 = 2$, as a consequence of Theorem \ref{thm:Milnor_nonreduced}.
 As for the conic-line arrangements $\mathcal{B}$ from \ref{ex:thm6.2_DPS} and \ref{ex:DPS_thm6.4}, the inequalities $b_3, b_9 \leq 1$ are also a consequence of Theorem 
\ref{thm:p.index}. Theorem \ref{thm:Milnor_reduced} proves that $b_3=1$ for the conic-line arrangements $\mathcal{B}$ from the same examples.
Still for the conic-line arrangements $\mathcal{B}$, the inequality $b_2 \leq 2$ follows from Theorem~\ref{thm:p.index}, taking into account the complete $2$-reductiveness of the three pencil members that compose $\mathcal{B}$. 

Consider secondly Example \ref{ex:Thm_5.4_DPS}. In this example, Theorem~\ref{thm:p.index}, via Theorem \ref{thm:inequality}, immediately shows that $b_2,b_4,b_8 \leq 3$.
On the other hand, Theorem \ref{thm:roots} shows that $b_2, b_4 \geq 2$ and $b_8 \geq 3$. To see this, take   $m_0=2$, $m_1=m_2=m_3=m_4 =0$, 
$m_{i,j}=\nu_{i,j}=1$ for all $i,j$. 
 We get $p_i=q_i=1$ for all $i \geq 1$, $p_0=4$, $q_0=1$, $q=1$, and $r_0=4$.
 For $N=2,4$, one obtains $\ell_N=0$ and for $N=8$, $\ell_N=1$.
Hence, $2 \leq b_2, b_4 \leq  3$ and $b_8 = 3$.
\end{remark}

\begin{remark}
\label{rem:another_Yoshinaga_counterex}
Let us take another look at  Example \ref{ex:Thm_5.4_DPS}.
We already know that $b_2=b_4=2$, see \cite[Theorem 5.4]{DPS}. This means that the inequality \eqref{eq:mod_ineq} is strict, since it becomes $2<3$, for $p=2$ and $s \in \{1,2\}$.\\
\end{remark}

\section{A look into the strict inequality: Yoshinaga's example}
\label{sec:Yoshinaga}
In this section we will study an example proposed by Yoshinaga in~\cite{Y} as a counterexample to the conjecture that the modular inequality \eqref{eq:mod_ineq} is in fact an equality for arrangements of hyperplanes when $k=2,3,4$ -- see~\cite[Conjecture 1.9]{PS3}. This counterexample is given by the icosidodecahedron arrangement $\cA_{\ID}$. The arrangement $\cA_{\ID}$ contains $16$ lines with $15$ quadruple points, and the rest are double points. Using the classical notation introduced by Hirzebruch, we have $t_4=15$, $t_3=0$, and $t_2=30$. One can check that $\cA_{\ID}$ is a quasi fiber--type curve of index $2$. This can be seen as follows: $\cA_{\ID}$ includes exactly 6 lines containing five quadruple points each (and no other singular points). We denote those lines $\cL_{11}, \ldots,\cL_{16}$. The remaining ten lines, $\cL_1, \ldots ,\cL_{10}$, contain three quadruple points each and six double points. There exists a smooth conic $\cC_2=\{f_2=0\}$ and a quintic $\cC_5=\{f_5=0\}$ such that
\begin{equation}
\label{eq:YoshinagaToric}
L_1+L_2f_2^2=f_5^2,
\end{equation}
where $\{L_1=0\}=\cL_1\cup \ldots \cup\cL_{10}$ and $\{L_2=0\}=\cL_{11}\cup \ldots \cup\cL_{16}$. According to Theorem~\ref{thm:fibered}, the resonance variety $\cR_1(W_{\cA_{\ID}},\ZZ_2)$ includes any $\w=\alpha_1\w_1+\alpha_2\w_2$, where $\w_1=\sum_{i=1}^{10} \sigma_i$ and $\w_2=\sum_{i=11}^{16} \sigma_i$. Hence, $\dim_{\ZZ_2} H^1(A^*_{W_{\cA_{\ID}},\ZZ_2},\cdot \curlywedge \w)\geq 1$.
The equality follows easily using the double points and Corollary~\ref{cor:double_points} to show that $\w=\alpha_1\w_1+\sum_{i=11}^{16} \alpha_i\sigma_i$. Using the relations~\eqref{eq:det} at the quadruple points, one obtains that $\w$ is resonant if and only if $\w=\alpha_1\w_1+\alpha_2\w_2$, and thus 
\begin{equation}
\label{eq:dim_aomoto}
    \dim_{\ZZ_2} H^1(A^*_{W_{\cA_{\ID}},\ZZ_2},\cdot \curlywedge \w)=1.
\end{equation}

This gives an alternative proof of~\cite[Proposition 4.2]{Y}. 

Since $|\cA_{\ID}|=16$, the only non-trivial possible roots of the classical Alexander polynomial are powers of $2$, so $N = 2,4,8,16$. Denote by $b_N$ the multiplicity of the primitive root of order $N$. By Theorem \ref{thm:inequality} and \eqref{eq:dim_aomoto}, 
$$b_N \leq 1,\; \forall N.$$

Also, in~\cite[Proposition 4.2]{Y}, Yoshinaga shows that $-1$ is not a root of the Alexander polynomial.
In light of~\eqref{eq:YoshinagaToric}, note that $\cC=\{L_1L_2=0\}$ can be considered as a reduced quasi fiber-type curve where $m_0=2$, $m_1=1$, $m_2=2$, and $m_{i,j}=1$. Since we are interested in the classical Alexander polynomial, $\nu_{i,j}=1$ and hence, by Theorem~\ref{thm:roots}, one has $q_0=p_1=p_2=q_1=q_2=1$, $p_0=2$, $N=2$, $r_0=1$, and $\ell_2=1$. This implies that $0=\ell_2 + r_0 -2\leq b_2\leq 1$, which becomes in fact an equality~$b_2=0$ as shown in~\cite{Y}.

\end{document}